\DeclareMathOperator*{\bigcdot}{\scalerel*{\cdot}{\bigodot}}
\newtheorem{theorem}{Theorem}[section]
\newtheorem{lemma}[theorem]{Lemma}
\theoremstyle{definition}
\newtheorem{definition}[theorem]{Definition}
\newcommand{\config}{\mathcal{C}}
\newcommand{\co}{\colon\thinspace}
\newcommand{\abs}[1]{\left\lvert #1 \right\rvert}
\DeclareMathOperator{\key}{key}
\DeclareMathOperator{\code}{code}
\DeclareMathOperator{\cell}{cell}
\DeclareMathOperator{\poset}{poset}
\DeclareMathOperator{\symbols}{\mathcal{A}}
\DeclareMathOperator{\partition}{part}
\DeclareRobustCommand{\stirling}{\genfrac[]{0pt}{}}
\newcommand{\R}{\mathbb{R}}
\newcommand{\Z}{\mathbb{Z}}
\newcommand{\hequiv}{\simeq}
\begin{document}

\author{Hannah Alpert}
\author{Matthew Kahle}
\thanks{M.K.\ thanks IAS for hosting him as a member in 2010--11, and for several visits since then. He gratefully acknowledges the support of a Sloan Research Fellowship, NSF \#DMS-1352386, and NSF \#CCF-1839358.  H.A.\ is supported by the National Science Foundation under Award No.\ DMS 1802914.}
\author{Robert MacPherson}
\thanks{All three authors thank ICERM for hosting them during the special thematic semester ``Topology in Motion'' in Autumn 2016.}
\title{Configuration spaces of disks in an infinite strip}

\subjclass[2010]{Primary: 55R80, Secondary: 82B26}

\keywords{configuration spaces, phase transitions}

\maketitle

\begin{abstract}
We study the topology of the configuration spaces $\config(n,w)$ of $n$ hard disks of unit diameter in an infinite strip of width $w$. We describe ranges of parameter or ``regimes'', where homology $H_j [\config(n,w)]$  behaves in qualitatively different ways.

We show that if $w \ge j+2$, then the homology $H_j[\config(n, w)]$ is isomorphic to the homology of the configuration space of points in the plane, $H_j[\config(n, \R^2)]$.  The Betti numbers of $\config (n, \R^2) $ were computed by Arnold \cite{Arnold14}, and so as a corollary of the isomorphism, $\beta_j[\config(n,w)]$ is a polynomial in $n$ of degree $2j$.

On the other hand, we show that if $2 \le w \le j+1$, then $\beta_j [ \config(n,w) ]$ grows exponentially with $n$. Most of our work is in carefully estimating $\beta_j [ \config(n,w) ]$ in this regime.

We also illustrate, for every $n$, the homological ``phase portrait'' in the $(w,j)$-plane--- the parameter values where homology $H_j [ \config(n,w)]$ is trivial, nontrivial, and isomorphic with $H_j [ \config(n, \R^2)]$. Motivated by the notion of phase transitions for hard-spheres systems, we discuss these as the ``homological solid, liquid, and gas'' regimes.

\end{abstract}

\section{Introduction}

We study the topology of configuration spaces $\config(n,w)$ of $n$ non-overlapping disks of unit diameter in an infinite strip of width $w$.

In other words, for non-negative integers $n, w$ we define
\begin{align*}
\config(n,w)  &= \{ (x_1, y_1, x_2, y_2, \dots, x_n, y_n) \in \R^{2n} : \\
&(x_i-x_j)^2 + (y_i-y_j)^2 \ge 1 \mbox{ for every } i \neq j, \mbox{ and } \\
& 1/2 \le y_i \le w -1/2 \mbox{ for every } i. \} 
\end{align*}
The coordinates $(x_i, y_i)$ are the centers of disk $i$, the inequalities $(x_i-x_j)^2 + (y_i-y_j)^2 \ge 1$ ensure that the disks have disjoint interiors, and the inequalities $1/2 \le y_i \le w -1/2$ ensure that the disks of unit diameter (or radius $1/2$) stay in the closed strip $0 \le y \le w$. These spaces generalize the well-studied configuration space of points in the plane, which we denote $\config(n, \R^2)$. 

Our main result describes the asymptotics for the Betti numbers $\beta_j [ \config(n,w)]$, for fixed $j$ and $w$, as $n \to \infty$. Our results do not depend on the choice of coefficient field for the homology.  We use the notation $f  \asymp g$ to indicate that there exist positive constants $c_1, c_2$ such that $$c_1 g(n) \le f(n) \le c_2 g(n)$$ 
for all sufficiently large $n$. In the following, the implied constants depend on $j$ and $w$ but not on $n$.

\begin{figure}
\centering
 \scalebox{0.85}{
\begin{tikzpicture}
\large
\draw[opacity=0.5] [->] (-0.5,0.5) -- (10,0.5);
\draw [opacity=0.25] (-0.5,1.5) -- (10,1.5);
\draw [opacity=0.25] (-0.5,2.5) -- (10,2.5);
\draw [opacity=0.25] (-0.5,3.5) -- (10,3.5);
\draw [opacity=0.25] (-0.5,4.5) -- (10,4.5);
\draw [opacity=0.25] (-0.5,5.5) -- (10,5.5);
\draw [opacity=0.25] (-0.5,6.5) -- (10,6.5);
\draw [opacity=0.25] (-0.5,7.5) -- (10,7.5);
\draw [opacity=0.25] (-0.5,8.5) -- (10,8.5);

\draw[opacity=0.5] [->] (-0.5,0.5) -- (-0.5,9);
\draw [opacity=0.25](0.5,0.5) -- (0.5,9);
\draw[opacity=0.25] (1.5,0.5) -- (1.5,9);
\draw [opacity=0.25](2.5,0.5) -- (2.5,9);
\draw[opacity=0.25] (3.5,0.5) -- (3.5,9);
\draw [opacity=0.25](4.5,0.5) -- (4.5,9);
\draw [opacity=0.25](5.5,0.5) -- (5.5,9);
\draw [opacity=0.25](6.5,0.5) -- (6.5,9);
\draw [opacity=0.25](7.5,0.5) -- (7.5,9);
\draw [opacity=0.25](8.5,0.5) -- (8.5,9);
\draw [opacity=0.25](9.5,0.5) -- (9.5,9);

 \node at (10.5,0.5) {\bf  $w$};
 \node at (-0.5,9.5) {\bf  $j$};

\small
\node at (-1/2,17/2) {$0$};
\node at (1/2,17/2) {$0$};

\node at (-1/2,1/2) {$0$};
\node at (-1/2,3/2) {$0$};
\node at (-1/2, 5/2) {$0$};
\node at (-1/2, 7/2) {$0$};
\node at (-1/2, 9/2) {$0$};
\node at (-1/2, 11/2) {$0$};
\node at (-1/2, 13/2) {$0$};
\node at (-1/2, 15/2) {$0$};
\node at (-1,1/2) {0};
\node at (-1,3/2) {1};
\node at (-1,5/2) {2};
\node at (-1,7/2) {3};
\node at (-1,9/2) {4};
\node at (-1,11/2) {5};
\node at (-1,13/2) {6};
\node at (-1,15/2) {7};
\node at (-1,17/2) {8};
\node  at (1/2,1/2) {$\bf n!$};
\node at (1/2,3/2) {$0$};
\node at (1/2, 5/2) {$0$};
\node at (1/2, 7/2) {$0$};
\node at (1/2, 9/2) {$0$};
\node at (1/2, 11/2) {$0$};
\node at (1/2, 13/2) {$0$};
\node at (1/2, 15/2) {$0$};
\node at (-1/2,0) {0};
\node at (1/2,0) {1};
\node at (3/2,0) {2};
\node at (5/2,0) {3};
\node at (7/2,0) {4};
\node at (9/2,0) {5};
\node at (11/2,0) {6};
\node at (13/2,0) {7};
\node at (15/2,0) {8};
\node at (17/2,0) {9};
\node at (19/2,0) {10};
\node at (3/2,1/2) {$1$};
\node at (5/2,1/2) {$1$};
\node at (7/2,1/2) {$1$};
\node at (9/2,1/2) {$1$};
\node at (11/2,1/2) {$1$};
\node at (13/2,1/2) {$1$};
\node at (15/2,1/2) {$1$};
\node  at (3/2,3/2) {$ \bf 2^n n^2$};
\node  at (3/2, 5/2) {$ \bf 3^n n^4$};
\node  at (3/2, 7/2) {$ \bf 4^n n^6$};
\node  at (3/2, 9/2) {$ \bf 5^n n^8$};
\node  at (3/2, 11/2) {$ \bf 6^n n^{10}$};
\node  at (3/2, 13/2) {$ \bf 7^n n^{12}$};
\node  at (3/2, 15/2) {$ \bf 8^n n^{14}$};
\node   at (5/2, 5/2) {$ \bf 2^n n^3$};
\node   at (5/2, 7/2) {$ \bf 2^n n^5$};
\node   at (5/2, 9/2) {$ \bf 3^n n^6$};
\node   at (5/2, 11/2) {$ \bf 3^n n^8$};
\node   at (5/2, 13/2) {$ \bf 4^n n^9$};
\node   at (5/2, 15/2) {$ \bf 4^n n^{11}$};
\node   at (7/2, 7/2) {$ \bf 2^n n^4$};
\node   at (7/2, 9/2) {$ \bf 2^n n^6$};
\node   at (7/2, 11/2) {$ \bf 2^n n^8$};
\node   at (7/2, 13/2) {$ \bf 3^n  n^8$};
\node   at (7/2, 15/2) {$ \bf 3^n n^{10}$};
\node   at (9/2, 9/2) {$ \bf 2^n n^5$};
\node   at (9/2, 11/2) {$ \bf 2^n n^7$};
\node   at (9/2, 13/2) {$ \bf 2^n n^9$};
\node   at (9/2, 15/2) {$ \bf 2^n n^{11}$};
\node   at (11/2, 11/2) {$ \bf 2^n n^6$};
\node   at (11/2, 13/2) {$ \bf 2^n n^8$};
\node   at (11/2, 15/2) {$ \bf 2^n n^{10}$};
\node   at (13/2, 13/2) {$ \bf 2^n n^7$};
\node   at (13/2, 15/2) {$ \bf 2^n n^{9}$};
\node   at (15/2, 15/2) {$ \bf 2^n n^8$};
\node at (3/2,17/2) {$ \bf 9^n n^{16}$};
\node at (5/2,17/2) {$ \bf 5^n n^{12}$};
\node at (7/2,17/2) {$ \bf 3^n n^{12}$};
\node at (9/2,17/2) {$ \bf 3^n n^{10}$};
\node at (11/2,17/2) {$ \bf 2^n n^{12}$};
\node at (13/2,17/2) {$ \bf 2^n n^{11}$};
\node at (15/2,17/2) {$ \bf 2^n n^{10}$};
\node at (17/2,17/2) {$ \bf 2^n n^{9}$};

\node at (3/2,1/2) {$1$};
\node at (5/2,1/2) {$1$};
\node at (7/2,1/2) {$1$};
\node at (9/2,1/2) {$1$};
\node at (11/2,1/2) {$1$};
\node at (13/2,1/2) {$1$};
\node at (15/2,1/2) {$1$};
\node at (17/2,1/2) {$1$};
\node at (19/2,1/2) {$1$};

\node at (5/2,3/2) {$n^2$};
\node at (7/2,3/2) {$n^2$};
\node at (9/2,3/2) {$n^2$};
\node at (11/2,3/2) {$n^2$};
\node at (13/2,3/2) {$n^2$};
\node at (15/2,3/2) {$n^2$};
\node at (17/2,3/2) {$n^2$};
\node at (19/2,3/2) {$n^2$};

\node at (7/2,5/2) {$n^4$};
\node at (9/2,5/2) {$n^4$};
\node at (11/2,5/2) {$n^4$};
\node at (13/2,5/2) {$n^4$};
\node at (15/2,5/2) {$n^4$};
\node at (17/2,5/2) {$n^4$};
\node at (19/2,5/2) {$n^4$};

\node at (9/2,7/2) {$n^6$};
\node at (11/2,7/2) {$n^6$};
\node at (13/2,7/2) {$n^6$};
\node at (15/2,7/2) {$n^6$};
\node at (17/2,7/2) {$n^6$};
\node at (19/2,7/2) {$n^6$};

\node at (11/2,9/2) {$n^8$};
\node at (13/2,9/2) {$n^8$};
\node at (15/2,9/2) {$n^8$};
\node at (17/2,9/2) {$n^8$};
\node at (19/2,9/2) {$n^8$};

\node at (13/2,11/2) {$n^{10}$};
\node at (15/2,11/2) {$n^{10}$};
\node at (17/2,11/2) {$n^{10}$};
\node at (19/2,11/2) {$n^{10}$};

\node at (15/2,13/2) {$n^{12}$};
\node at (17/2,13/2) {$n^{12}$};
\node at (19/2,13/2) {$n^{12}$};

\node at (17/2,15/2) {$n^{14}$};
\node at (19/2,15/2) {$n^{14}$};
\node at (19/2,17/2) {$n^{16}$};
\end{tikzpicture}
}
\caption{Theorem \ref{thm:main} describes the rate of growth of $\beta_{j} [ \config(n,w)]$, for fixed $j$ and $w$, as $n \to \infty$. The results are up to a constant factor, e.g.\ 
$ \beta_{8} [ \config(n,3)]  \asymp  5^n n^{12}$.}

\label{fig:main}
\end{figure}
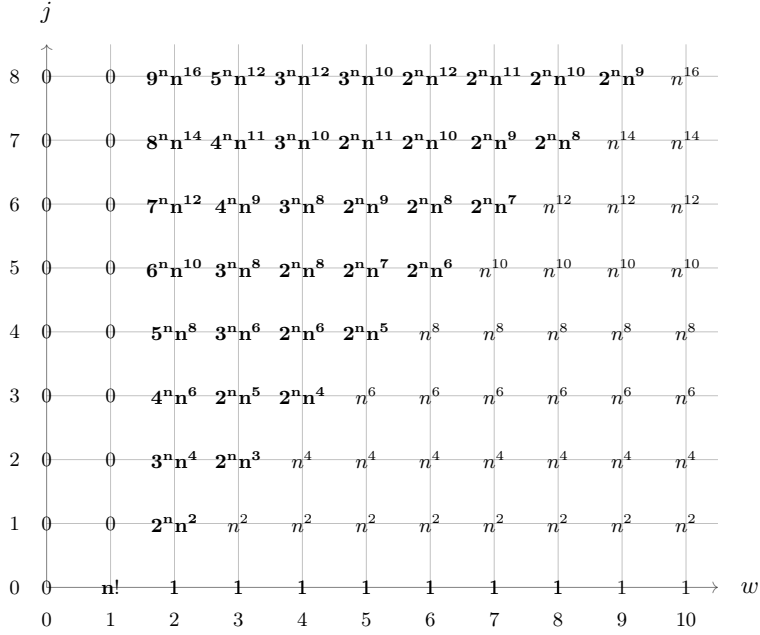

\begin{theorem}[Asymptotic rate of growth of the Betti numbers as $n \to \infty$] \label{thm:main} 
\noindent
\begin{enumerate}
\item If $w \ge 2$ and $0 \le j \le w-2$ then the inclusion map $i : \config(n,w) \to \config(n,\R^2)$ induces an isomorphism on homology 
$$i_*:\ H_j [ \config(n,w) ] \rightarrow H_j [ \config(n,\R^2)].$$
So if $n \to \infty$ then the asymptotic rate of growth is given by $$ \beta_j \left[ \config(n,w) \right] \asymp  n^{2j}.$$
\item If $w \ge 2$ and $j \ge w-1$ then write $j = q(w-1) + r$ with  $q \ge 1$ and $0 \le r < w-1$. Then we have that
$$\beta_j \left[ \config(n,w) \right] \asymp (q+1)^n n^{qw+2r}.$$
If $w=1$ and $j=0$, then $\beta_0=n!$.
\item If either $w=0$, or $w=1$ and $j \ge 1$, then $\beta_j = 0$.\\
\end{enumerate}
\end{theorem}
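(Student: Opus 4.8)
The plan is to attack Theorem~\ref{thm:main} by first reducing the topology of $\config(n,w)$ to a combinatorial model, and then running three separate arguments for the three regimes. For the combinatorial reduction I would build an explicit CW (or cubical) complex homotopy equivalent to $\config(n,w)$ whose cells are indexed by ``discretized configurations'': record for each disk which of the roughly $w$ available vertical levels it occupies and the left-to-right order of the disks, so that a cell is essentially a word in an alphabet of size controlled by $w$ together with some order data. This is the analogue of Abrams-type discretized configuration spaces, and the key preliminary lemma is that the inclusion of this discrete model is a deformation retract (or at least a homology isomorphism) for $w \ge 1$; one proves this by a Morse-theoretic or straightening-the-strip argument, pushing disks apart horizontally. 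Given such a model, the chain complex computing $H_*$ is finitely generated in each degree by data that scales polynomially in $n$ for the order part and exponentially (base roughly $w$) for the level part, which already explains the shape $c^n n^d$ of the answer.

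For part~(1), the regime $0 \le j \le w-2$, I would show the inclusion $i\co\config(n,w)\to\config(n,\R^2)$ is $j$-connected on homology. The cleanest route is a "stabilization in $w$" argument: the strip $\config(n,w)$ sits inside $\config(n,w+1)$, and one shows the inclusion induces an isomorphism on $H_j$ once $w$ is large relative to $j$, by comparing the discrete models degree by degree — adding a level only affects cells of dimension $\ge$ something like $w-1$. Taking $w\to\infty$ identifies the stable value with $H_j[\config(n,\R^2)]$. Arnold's computation then gives $\beta_j \asymp n^{2j}$ as a corollary; I would cite \cite{Arnold14} for the polynomial-of-degree-$2j$ statement and only need to check the leading behavior.

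For part~(2), the main regime, I would set $j = q(w-1)+r$ and produce matching upper and lower bounds on $\beta_j[\config(n,w)]$. For the \emph{lower} bound, build explicit nontrivial homology classes by a "local gadget" construction: in a bounded horizontal window one can create a copy of a sphere $S^{w-1}$ (the local configuration space of enough disks in a short strip, which is known to have top homology in degree $w-1$), and then take products of $q$ such gadgets placed in disjoint windows, with the remaining disks used as "spacers" that can be distributed among the windows in $\binom{n}{\cdot}$-many ways, contributing the $n^{qw}$ factor; the base $(q+1)^n$ comes from a transfer/counting argument on how the $n$ disks split into the $q+1$ regions (the $q$ gadgets plus free disks). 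The lower bound is then a linear-independence statement, proved by exhibiting dual cycles or by an explicit splitting of the chain complex. For the \emph{upper} bound, bound the rank of $C_j$ in the discrete model by counting cells: a $j$-cell corresponds to choosing which disks are "active" in creating the $j$ dimensions, and a careful accounting shows the number of such cells is $O((q+1)^n n^{qw+2r})$, using that producing one unit of dimension costs roughly $w-1$ disks packed into a window (hence the base $q+1$ for how many windows a disk can be assigned to) with polynomial slack $n^{qw+2r}$ for positions; since $\beta_j \le \dim C_j$ this suffices for the upper bound, modulo checking the exponent bookkeeping. The edge case $w=1$, $j=0$ is separate: $\config(n,1)$ is $n$ disks on a line, which deformation retracts onto $n!$ discrete points (one per ordering), so $\beta_0 = n!$.

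Part~(3) is the easy case. If $w=0$ the strip is empty (no disk of radius $1/2$ fits), so $\config(n,0)=\emptyset$ and all $\beta_j=0$. If $w=1$, the disks lie on a line and cannot pass each other, so $\config(n,1)$ is homotopy equivalent to the disjoint union of $n!$ contractible pieces (one per linear order), giving $\beta_0=n!$ and $\beta_j=0$ for $j\ge 1$; this also follows immediately from the discrete model, whose cells in positive dimension are all absent when there is only one level. I expect the main obstacle to be part~(2), specifically the \emph{lower} bound: constructing enough genuinely independent homology classes and verifying their independence (rather than just their nonvanishing) requires a clean algebraic decomposition of the chain complex of the discrete model — essentially a Künneth/Mayer--Vietoris analysis that isolates the "one gadget per window plus free spacers" contribution — and getting the exponents $qw+2r$ and the base $q+1$ to match the upper bound exactly is where the careful estimation the abstract warns about will be concentrated.
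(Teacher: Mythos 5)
Your overall architecture (combinatorial cell model, skeleton comparison for the gas regime, explicit cycles for the lower bound, cell counting for the upper bound) parallels the paper, and parts (1) and (3) are essentially the paper's arguments: the paper's model $\cell(n,w)$ is the subcomplex of the Salvetti complex with blocks of width $\le w$, and part (1) follows because $\cell(n,w)$ and $\cell(n)$ share the same $(w-1)$--skeleton. Your lower-bound plan is also in the right spirit, though the local gadget is a $(w-1)$--torus of nested spinning disks rather than a sphere, and the paper settles the independence question exactly the way you list as one option: it constructs dual closed submanifolds $Z^*_\alpha$ (columns of vertically aligned disks) and uses the intersection pairing with Borel--Moore homology.

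The genuine gap is in your upper bound. You propose $\beta_j \le \dim C_j$ and claim the number of $j$--cells in the discrete model is $O((q+1)^n n^{qw+2r})$. This is false for any model that records the left-to-right order of all $n$ disks (which it must, to compute the homology of the ordered configuration space): a $j$--cell is determined by a symbol with $n-j$ ordered blocks, and already the symbols in which all but $O(j)$ blocks are singletons number at least $(n-j)!$, which is superexponential and swamps $(q+1)^n n^{qw+2r}$. (Concretely, for $j = w-1$ your bound would be $O(2^n n^w)$, but there are on the order of $n!$ cells of dimension $w-1$.) So the naive chain-level bound cannot close the argument. The paper's resolution --- and the bulk of its technical work --- is a discrete gradient vector field on $\cell(n,w)$ whose critical cells are characterized combinatorially (via ``top-heavy'' blocks and leader/follower pairs), followed by an injective encoding of the critical cells into $[n]^{z(S)} \times [b(S)+1]^n$ for each of finitely many ``skylines'' $S$; only after this collapse does the count $O((q+1)^n n^{qw+2r})$ hold. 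Your proposal is missing this collapsing step entirely, and without it the upper bound does not follow.
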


Configuration spaces of disks arise naturally as the phase space of a $2$-dimensional ``hard-spheres'' system, so are of interest in physics as well. See, for example, the discussion of hard disks in a box by Diaconis in \cite{Diaconis09}, and the review of the physics literature by Carlsson et al.\ in \cite{CGKM12}.

The topology of configuration spaces of particles with thickness has been studied earlier, for example in \cite{Alpert17}, \cite{BBK14}, \cite{Deeley11}, and \cite{KKLS16}, but so far, not much seems to be known. Some of this past work is also inspired in part by applications to engineering, for example motion planning for robots.

Inspired by the statement of Theorem \ref{thm:main}, we suggest the following definitions for ``homological solid, liquid, and gas'' regimes in the $(w,j)$ plane.

\begin{itemize}

\item We define the ``homological solid'' phase to be wherever homology is trivial.  The motivation for this definition is that one expects that in a crystal phase, things are fairly rigid and that the configuration space is simple.

\item We define the ``homological gas'' phase to be where homology agrees with the configuration space of points in the plane. In other words, through the lens of this homology group, the particles are indistinguishable from points, corresponding to the assumption of atoms acting as point particles in an ideal gas. Arnold \cite{Arnold14} showed that the Poincar\'e polynomial of $\config(n,\R^2)$ is given by 
$$\beta_0 + \beta_1 t + \dots + \beta_{n-1} t^{n-1} = (1+t)(1+2t) \dots (1+(n-1)t).$$
It follows that the Betti numbers are given by the unsigned Stirling numbers of the first kind.
$$\beta_j \left[ \config(n,\R^2) \right]  =\stirling{n}{n-j}.$$
For a self-contained overview of the homology and cohomology of $\config(n,\R^2)$, see Sinha \cite{Sinha13}.

One can use a standard recursive formula for Stirling numbers to write $ \stirling{n}{n-j}$ as a polynomial in $n$ of degree $2j$. See, for example, Section 1.3 of Stanley's book \cite{Stanley12}. Formulas for the first few Betti numbers are given by:
\begin{align*}
\beta_0 [ \config(n,\R^2)] &= 1& \\
\beta_1  [ \config(n,\R^2)]  &= \frac{n(n-1)}{2}  \\
\beta_2  [ \config(n,\R^2)]  &= \frac{(3n-1)n(n-1)(n-2)}{24}  \\
\beta_3  [ \config(n,\R^2)]  &= \frac{n^2(n-1)^2(n-2)(n-3)}{48} \\
\end{align*} 

\item Finally, we define the ``homological liquid'' phase to be everything else. This is the most interesting regime topologically, and we are somewhat surprised to find that there is a lot of homology. Another physical metaphor for the homological liquid regime, suggested to us by Jeremy Mason, is a turbulent fluid in a pipe.\\

\end{itemize}

Most of our work in this paper is in estimating the Betti numbers in the homological liquid regime. For lower bounds, we use the duality between the homology of $\config(n,w)$ and its homology with closed support. For upper bounds, we first prove that $\config(n,w)$ is homotopy equivalent to a cell complex $\cell(n,w)$, and then apply discrete Morse theory. \\

Some advantages of the definitions of homological solid, liquid, and gas include their simplicity, their generality, and being well defined for every finite $n$ and not merely asymptotically. All three regimes are already visible when $n=3$ and $j=1$. The following describes the shapes of the regimes for every $n$. We emphasize that the boundary between solid and liquid regimes is more interesting for finite $n$ than it appears to be in Theorem \ref{thm:main}.

\begin{theorem}[The phase portrait for every $n$] \label{thm:regimes} 

\noindent
\begin{enumerate}
\item (Gas regime.) If $w \ge 2$ and $0 \le j \le w-2$, then the inclusion map $i : \config(n,w) \to \config(n,\R^2)$ induces an isomorphism on homology 
$$i_*:\ H_j [ \config(n,w) ] \rightarrow H_j [ \config(n,\R^2)].$$
Moreover, if $w \ge n$ then $\config(n,w)$ is homotopy equivalent to $\config(n,\R^2)$.\\ 
\item (Liquid regime.) If $1 \le w \le n-1$ and $w-1 \le j \le n - \lceil n/w \rceil$ then $H_j ( \config( n, w) ) \neq 0$, but the inclusion map $i: \config(n,w) \to \config(n,\R^2)$  does not induce an isomorphism on homology
$$i_*:\ H_j [ \config(n,w) ] \rightarrow H_j [ \config(n,\R^2)].$$

\item (Solid regime.) If either $w=0$, or $w \ge 1$ and $j \ge n - \lceil n/w \rceil +1$, then 
$$H_j [ \config(n,w) ] = 0.$$

\end{enumerate}
\end{theorem}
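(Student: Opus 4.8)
The plan is to read all three parts off the cellular model $\cell(n,w)$ together with the estimates already used for Theorem~\ref{thm:main}: Parts (1) and (3) amount to bookkeeping on top of that machinery, while Part (2) is where the real content lies. For Part~(1), the homology isomorphism is exactly Theorem~\ref{thm:main}(1). For the stronger assertion that $\config(n,w)\hequiv\config(n,\R^2)$ once $w\ge n$, I would work directly with $\cell(n,w)$: its cells are indexed by combinatorial data recording how the $n$ disks are distributed among at most $w$ horizontal rows of the strip, and $n$ disks never require more than $n$ rows, so the natural inclusions $\cell(n,w)\hookrightarrow\cell(n,w+1)$ are isomorphisms for $w\ge n$; the stable complex $\bigcup_w\cell(n,w)$ is a cellular model for $\config(n,\R^2)$, whence $\config(n,w)\hequiv\cell(n,n)\hequiv\config(n,\R^2)$. (Equivalently, when $w\ge n$ there is room to separate the disks into $n$ disjoint unit-height bands, and one implements this by a deformation retraction; I would use whichever description is cleanest given the precise definition of $\cell(n,w)$.) For Part~(3), the cases $w\le1$ are precisely Theorem~\ref{thm:main}(3): if $w=0$ then $\config(n,0)=\emptyset$, and if $w=1$ then the disks all have $y_i=1/2$, cannot exchange, so $\config(n,1)$ is a disjoint union of $n!$ contractible pieces and $H_j=0$ for $j\ge1=n-\lceil n/1\rceil+1$. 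For $w\ge2$, the vanishing of $H_j$ for $j\ge n-\lceil n/w\rceil+1$ is the top-degree statement behind the upper bound of Theorem~\ref{thm:main}(2): in the discrete Morse function on $\cell(n,w)$ used there, a critical cell records a configuration type whose disks lie in $c$ vertical columns, has dimension $n-c$, and has $c\ge\lceil n/w\rceil$ since a width-$w$ strip holds columns of at most $w$ disks; so the Morse complex, and hence $H_*(\config(n,w))$, vanishes above degree $n-\lceil n/w\rceil$.

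For the nonvanishing half of Part~(2): given $w-1\le j\le n-\lceil n/w\rceil$ I would exhibit explicit nonzero $j$-cycles. The building blocks are small clusters of disks, each confined to a bounded segment of the strip, whose own configuration space carries a homology class in some degree $d$ with $1\le d\le w-1$ (the extreme case being a cluster of $w+1$ disks, whose top homology sits in degree $w-1$ by the dimension count of Part~(3)). Placing several such clusters in disjoint segments of the strip and taking the exterior product via K\"unneth produces a class in $H_j$ for each $j$ in the stated range, the lower end coming from a single $(w+1)$-disk cluster and the upper end using all $n$ disks. The essential point is that these classes remain nonzero in the entire space $\config(n,w)$ and not merely in the sub-configuration-space spanned by the clusters; for this I would use the duality between $H_*(\config(n,w))$ and its closed-support (Borel--Moore) homology --- the same tool behind all the lower bounds for Theorem~\ref{thm:main} --- pairing each product cycle with an explicit closed-support cocycle dual to it.

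For the non-isomorphism half of Part~(2), I would compare Betti numbers: $\beta_j[\config(n,\R^2)]=\stirling{n}{n-j}$ is a polynomial in $n$ of degree $2j$, whereas the explicit lower bounds for $\beta_j[\config(n,w)]$ produced in proving Theorem~\ref{thm:main}(2) hold for every $n\ge w+1$ and grow exponentially in $n$; so $\beta_j[\config(n,w)]>\beta_j[\config(n,\R^2)]$ for all $n$ beyond an explicit threshold, with the finitely many remaining pairs (and the whole $w=1$ case, where $\beta_0=n!>1$) checked directly. A homology isomorphism would in particular be injective, so a strict inequality of Betti numbers precludes it. I expect Part~(2) to be the main obstacle on both counts: identifying the correct cluster gadgets and proving --- via closed-support duality --- that their product classes survive in $\config(n,w)$ rather than collapsing; and making the inequality $\beta_j[\config(n,w)]>\beta_j[\config(n,\R^2)]$ effective all the way down to the lower corner $n=w+1$, $j=w-1$ of the range, where the asymptotic form of Theorem~\ref{thm:main} does not suffice and one needs the precise bounds from its proof.
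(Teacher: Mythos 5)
Parts (1) and (3) and the nonvanishing half of Part (2) match the paper's argument. For (1), the paper likewise gets the ``moreover'' from $\cell(n,w)=\cell(n)$ once $w\ge n$; for (3) the paper simply observes that $\cell(n,w)$ has no cells above dimension $n-\lceil n/w\rceil$ (your detour through the Morse complex is correct but unnecessary); and your ``clusters placed in disjoint segments, detected by Borel--Moore duality'' is exactly the paper's torus cycles $Z_\alpha$ indexed by special symbols, paired against the closed-support classes $Z^*_\alpha$.

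The genuine gap is in your argument for the non-isomorphism half of Part (2). You propose to show $\beta_j[\config(n,w)]>\beta_j[\config(n,\R^2)]$ using the explicit lower bound (the count of special symbols) and to check the remaining cases ``directly.'' Two problems. First, the theorem quantifies over all $n,w,j$, so the exceptional low-$n$ corners form an infinite family as $w$ varies; they cannot be dispatched as finitely many cases. Second, and fatally, at the corner $n=w+1$, $j=w-1$ the special-symbol count is $2(w+1)(w-1)!$, while $\beta_{w-1}[\config(w+1,\R^2)]=\stirling{w+1}{2}=w!\,(1+\tfrac12+\cdots+\tfrac1w)$, which exceeds the lower bound for all $w\ge 7$ (e.g.\ $11520<13068$ at $w=7$, even though the true Betti number there is $48348$). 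So the precise bounds from the proof of Theorem \ref{thm:main} are simply not strong enough to run your comparison, and no amount of effectivizing the asymptotics fixes this. The paper avoids Betti numbers entirely: it exhibits an explicit nonzero element of $\ker i_*$, namely $Z_\alpha - Z_{\alpha'}$ where $\alpha'$ is obtained from the special symbol $\alpha$ by transposing two blocks. The intersection pairing shows $[Z_\alpha]\neq[Z_{\alpha'}]$ in $H_j[\config(n,w)]$, while in $\config(n,\R^2)$ the two tori are isotopic (slide the blocks past each other), so their difference dies under $i_*$; the hypotheses $n\ge w+1$ and $j\ge w-1$ are exactly what guarantee a width-$w$ block plus at least one other block to transpose. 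You should replace the Betti-number comparison with an argument of this kind.
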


 \begin{figure}

 \scalebox{0.3}{
 \begin{tikzpicture}[thick]
 \huge
 \draw [->] (0,0)--(24.5,0);
 \node at (25,0) {\scalefont{5} $w$};
 \draw [->] (0,0)--(0,24.5);
 \node at (-0.2,25.25) {\scalefont{5} $j$};
 

 \foreach \x in {0, ..., 24}
 {
 \draw (\x,0)--(\x,24);
 \draw (0,\x)--(24,\x);
 }
 \foreach \x in {0, ..., 24}
 {
 \node at (\x,-0.5){\x};
 }

 \foreach \x in {0, ..., 24}
 {
 \node at (-0.5,\x){\x};
 }

 \foreach \x in {1,...,21}
 {
 \draw [blue, line width = 2mm] (\x+1.5,\x-0.5)--(\x+1.5,\x+0.5)--(\x+2.5,\x+0.5);
 }
\draw [blue, line width = 2mm] (1.5,0)--(1.5,0.5)--(2.5,0.5); 
\draw [blue, line width = 2mm] (23.5,21.5)--(23.5,23.5)--(24,23.5);

 \draw [blue, line width = 2mm] (0.5,0)--(0.5,0.5)--(1.5,0.5)--(1.5,12.5)--(2.5,12.5)--(2.5,16.5)--(3.5,16.5)--(3.5,18.5)--(4.5,18.5)--(4.5,19.5)--(5.5,19.5)--(5.5,20.5)--(7.5,20.5)--(7.5,21.5)--(11.5,21.5)--(11.5,22.5)--(23.5,22.5);

\node at (2,22)[blue] {\scalefont{8} \bf solid};
\node at (9,15)[blue] {\scalefont{8} \bf liquid};
\node at (18,6)[blue]  {\scalefont{8} \bf gas};

 \end{tikzpicture}
 }
 \caption{Theorem \ref{thm:regimes} describes the shapes of the homological solid, liquid, and gas regimes for every $n$. We illustrate here the case $n=24$.}
 \end{figure}
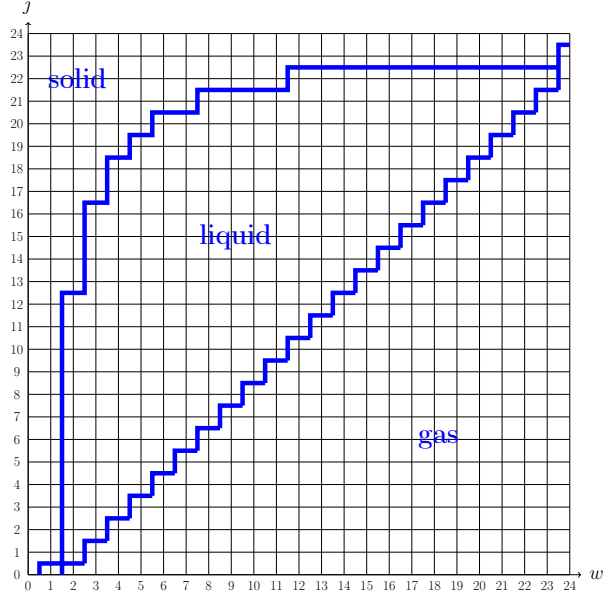

\bigskip


Although the statement is for every $n$ and not only asymptotically as $n \to \infty$, Theorem \ref{thm:regimes} is actually easier than Theorem \ref{thm:main} and is essentially a corollary of intermediate results. \\

The rest of the paper is organized as follows. 

In Section \ref{sec:defs}, we give definitions and notation used throughout the rest of the paper. In particular, we describe a polyhedral cell complex $\cell(n,w)$, a subcomplex of the Salvetti complex, which is homotopy equivalent to $\config(n,w)$.

In Section \ref{sec:homotopy}, we prove the homotopy equivalence of $\config(n,w)$ and $\cell(n,w)$. Parts (1) and (3) of Theorem \ref{thm:main} follow immediately from the homotopy equivalence.

In Section \ref{sec:lower}, we prove lower bounds on the Betti numbers in the liquid regime, giving one direction of part (2) of Theorem \ref{thm:main}. 

In Section \ref{sec:regimes}, we prove Theorem \ref{thm:regimes}.

In Section \ref{sec:Morse}, we describe a discrete gradient vector field on $\cell(n,w)$. This allows us to collapse $\cell(n,w)$ to a regular CW complex with far fewer cells.

In Section \ref{sec:upper}, we use the results from Section \ref{sec:Morse} to prove upper bounds, giving the other direction of part (2) of Theorem \ref{thm:main}.

In Section \ref{sec:comments}, we close with comments and open problems.

Finally, in an appendix by Ulrich Bauer and Kyle Parsons, we include calculation of the Betti numbers for $n \le 8$. \\

\section{Definitions and notation} \label{sec:defs}



We first describe a ranked poset which we denote $\poset(n)$, which is the face poset of a regular CW complex $\cell(n)$ called the \emph{Salvetti complex}. Then, afterward, we define $\cell(n, w)$ as a subcomplex of $\cell(n)$.  The Salvetti complex and related constructions have appeared implicitly or explicitly many times---see Section 3 of \cite{BZ14} for a brief review of the literature. The complex was apparently first described explicitly by Salvetti in \cite{Salvetti87}.

\begin{definition}
The poset which we denote $\poset(n)$ has as its underlying set $\symbols(n)$, defined as follows. We call the elements of $\symbols(n)$ ``symbols''. A symbol is a permutation in one-line notation $(\sigma_1 \ \sigma_2 \ \dots \ \sigma_n)$, where between each consecutive pair of elements $\sigma_i \ \sigma_{i+1}$, there can either be a bar or not. 

The bars separate the permutation into pieces that we call \emph{blocks}. The partial order on $\poset(n)$ is characterized as follows: the covers in the Hasse diagram of a symbol $\alpha$ are the symbols obtained from $\alpha$ by the  operation of removing a bar and merging the two adjacent blocks by a shuffle---the shuffle must preserve the relative order within each block.
\end{definition}

The Hasse diagram of $\poset(3)$ is illustrated in Figure \ref{fig:p3}. 
For example, $(1 \mid 3 \mid 2)$, $(3 \ 1 \mid 2)$, and $(3 \ 2 \ 1)$ are all symbols in $\symbols(3)$. Moreover, they form a chain in the poset.

There are $n-1$ positions between consecutive pairs of elements, so there are exactly $n! \, 2^{n-1}$ symbols in $\symbols(n)$.

\begin{figure}
\centering
\begin{tikzpicture}[thin]
\small
\node at (0,0) {$\boldsymbol{\bigcdot}$};
\node at (1,0) {$\boldsymbol{\bigcdot}$};
\node at (2,0) {$\boldsymbol{\bigcdot}$};
\node at (3,0) {$\boldsymbol{\bigcdot}$};
\node at (4,0) {$\boldsymbol{\bigcdot}$};
\node at (5,0) {$\boldsymbol{\bigcdot}$};

\node at (-3,1) {$\boldsymbol{\bigcdot}$};
\node at (-2,1) {$\boldsymbol{\bigcdot}$};
\node at (-1,1) {$\boldsymbol{\bigcdot}$};
\node at (0,1) {$\boldsymbol{\bigcdot}$};
\node at (1,1) {$\boldsymbol{\bigcdot}$};
\node at (2,1) {$\boldsymbol{\bigcdot}$};
\node at (3,1) {$\boldsymbol{\bigcdot}$};
\node at (4,1) {$\boldsymbol{\bigcdot}$};
\node at (5,1) {$\boldsymbol{\bigcdot}$};
\node at (6,1) {$\boldsymbol{\bigcdot}$};
\node at (7,1) {$\boldsymbol{\bigcdot}$};
\node at (8,1) {$\boldsymbol{\bigcdot}$};

\node at (0,2) {$\boldsymbol{\bigcdot}$};
\node at (1,2) {$\boldsymbol{\bigcdot}$};
\node at (2,2) {$\boldsymbol{\bigcdot}$};
\node at (3,2) {$\boldsymbol{\bigcdot}$};
\node at (4,2) {$\boldsymbol{\bigcdot}$};
\node at (5,2) {$\boldsymbol{\bigcdot}$};
\draw [opacity=0.25] (0,0) --(-3,1);
\draw [opacity=0.25] (0,0) --(-2,1);
\draw [opacity=0.25] (0,0) --(-1,1);
\draw [opacity=0.25] (0,0) --(0,1);
\draw [opacity=0.25] (1,0) -- (-1,1);
\draw [opacity=0.25] (1,0) -- (0,1);
\draw [opacity=0.25] (1,0) -- (1,1);
\draw [opacity=0.25] (1,0) -- (2,1);
\draw [opacity=0.25] (2,0) -- (1,1);
\draw [opacity=0.25] (2,0) -- (2,1);
\draw [opacity=0.25] (2,0) -- (3,1);
\draw [opacity=0.25] (2,0) -- (4,1);
\draw [opacity=0.25] (3,0) -- (3,1);
\draw [opacity=0.25] (3,0) -- (4,1);
\draw [opacity=0.25] (3,0) -- (5,1);
\draw [opacity=0.25] (3,0) -- (6,1);
\draw [opacity=0.25] (4,0) -- (5,1);
\draw [opacity=0.25] (4,0) -- (6,1);
\draw [opacity=0.25] (4,0) -- (7,1);
\draw [opacity=0.25] (4,0) -- (8,1);
\draw [opacity=0.25] (5,0) -- (7,1);
\draw [opacity=0.25] (5,0) -- (8,1);
\draw [opacity=0.25] (5,0) -- (-3,1);
\draw [opacity=0.25] (5,0) -- (-2,1);

\draw [opacity=0.25] (-3,1) -- (0,2);
\draw [opacity=0.25] (-3,1) -- (1,2);
\draw [opacity=0.25] (-3,1) -- (2,2);
\draw [opacity=0.25] (-2,1) -- (3,2);
\draw [opacity=0.25] (-2,1) -- (4,2);
\draw [opacity=0.25] (-2,1) -- (5,2);
\draw [opacity=0.25] (-1,1) -- (1,2);
\draw [opacity=0.25] (-1,1) -- (2,2);
\draw [opacity=0.25] (-1,1) -- (3,2);
\draw [opacity=0.25] (0,1) -- (0,2);
\draw [opacity=0.25] (0,1) -- (4,2);
\draw [opacity=0.25] (0,1) -- (5,2);
\draw [opacity=0.25] (1,1) -- (2,2);
\draw [opacity=0.25] (1,1) -- (3,2);
\draw [opacity=0.25] (1,1) -- (4,2);
\draw [opacity=0.25] (2,1) -- (0,2);
\draw [opacity=0.25] (2,1) -- (1,2);
\draw [opacity=0.25] (2,1) -- (5,2);
\draw [opacity=0.25] (3,1) -- (0,2);
\draw [opacity=0.25] (3,1) -- (1,2);
\draw [opacity=0.25] (3,1) -- (2,2);
\draw [opacity=0.25] (4,1) -- (3,2);
\draw [opacity=0.25] (4,1) -- (4,2);
\draw [opacity=0.25] (4,1) -- (5,2);
\draw [opacity=0.25] (5,1) -- (1,2);
\draw [opacity=0.25] (5,1) -- (2,2);
\draw [opacity=0.25] (5,1) -- (3,2);
\draw [opacity=0.25] (6,1) -- (0,2);
\draw [opacity=0.25] (6,1) -- (4,2);
\draw [opacity=0.25] (6,1) -- (5,2);
\draw [opacity=0.25] (7,1) -- (2,2);
\draw [opacity=0.25] (7,1) -- (3,2);
\draw [opacity=0.25] (7,1) -- (4,2);
\draw [opacity=0.25] (8,1) -- (0,2);
\draw [opacity=0.25] (8,1) -- (1,2);
\draw [opacity=0.25] (8,1) -- (5,2);

\node at (0,-0.35) {$\bm{ (2 | 1 | 3)}$};
\node at (1,-0.35) {$\bm{(2 | 3 | 1)}$};
\node at (2,-0.35) {$\bm{ (3 | 2 | 1)}$};
\node at (3,-0.35) {$\bm{ (3 | 1 | 2)}$};
\node at (4,-0.35) {$\bm{ (1 | 3 | 2)}$};
\node at (5,-0.35) {$\bm{ (1 | 2 | 3)}$};

\node at (0,2.35) {$\bm{ (2 1 3)}$};
\node at (1,2.35) {$\bm{ (2 3 1)}$};
\node at (2,2.35) {$\bm{ (3 2 1)}$};
\node at (3,2.35) {$\bm{ (3 1 2)}$};
\node at (4,2.35) {$\bm{ (1 3 2)}$};
\node at (5,2.35) {$\bm{ (1 2  3)}$};

\node at (-3.4,1) {$\bm{ (2 1 | 3)}$};
\node at (-2.4,1) {$\bm{ (1 2 | 3)}$};
\node at (-1.4,1) {$\bm{ (2 | 3 1)}$};
\node at (-0.4,1) {$\bm{ (2 | 1 3)}$};
\node at (0.6,1) {$\bm{ (3 2 | 1)}$};
\node at (1.6,1) {$\bm{ (2 3 | 1)}$};
\node at (2.6,1) {$\bm{ (3 | 2 1)}$};
\node at (3.6,1) {$\bm{ (3 | 1 2)}$};
\node at (4.6,1) {$\bm{ (3  1 | 2)}$};
\node at (5.6,1) {$\bm{ (1 3 | 2)}$};
\node at (6.6,1) {$\bm{ (1 | 3 2)}$};
\node at (7.6,1) {$\bm{ (1 | 2 3)}$};
\end{tikzpicture}

\caption{The Hasse diagram of $\poset(3)$. This is the face poset of the Salvetti complex for the configuration space of $3$ points in the plane.} \label{fig:p3}
\end{figure}
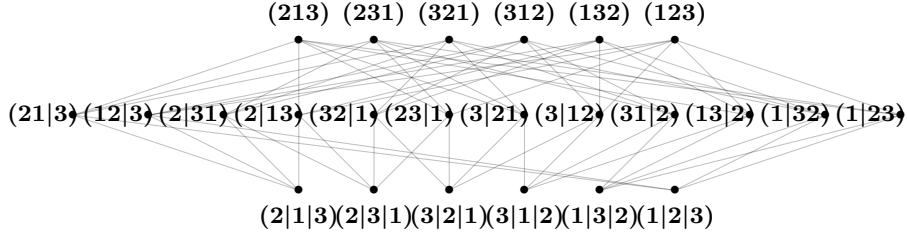

It is useful to consider ``block notation'' for a symbol. If we write
$$\alpha = (c_1 \mid c_2 \mid \dots \mid c_m),$$
it means that each $c_i$ is a block of the permutation, separated from the rest of the permutation by bars. Forgetting the order of permutation elements within a block, we may also regard a block as a subset of $[n] := \{ 1, 2, \dots, n \}$. So we may write without ambiguity such statements as ``$\sigma_k$ and $\sigma_\ell$ are in the same block''. 

It is well known that $\poset(n)$ is the face poset of a regular CW complex $\cell(n)$---see for example \cite{BZ14}, usually called the Salvetti complex.
In the more general context of complexifications of real hyperplane arrangements, it was shown in \cite{Salvetti87} that $\cell(n)$ is homotopy equivalent to the configuration space of points in the plane $\config(n,\R^2)$.

The cell complex $\cell(n)$ has $n! \binom{n-1}{i-1} = n! \binom{n-1}{n-i}$ $i$-dimensional faces, indexed by permutations with $n-i-1$ bars. If a cell is indexed by a symbol $\alpha=(c_1 \mid c_2 \mid \dots \mid c_m)$ with $m$ blocks, then the cell has dimension $j = n-m$. \\

We will be mostly concerned with certain subcomplexes of $\cell(n)$, described as follows.

\begin{definition}
For every $n , w \ge 1$, we define $\poset(n,w)$ to be the sub-poset of $\poset(n)$ where every block has width at most $w$, that is, at most $w$ elements. We note that $\poset(n,w)$ is an order ideal in $\poset(n)$.  Then since $\poset(n)$ is the face poset of $\cell(n)$, we have that $\poset(n,w)$ is the face poset of a subcomplex which we denote $\cell(n,w)$.  
\end{definition}

The cell complex $\cell(3,2)$ is illustrated in Figure \ref{fig:giant}. \\

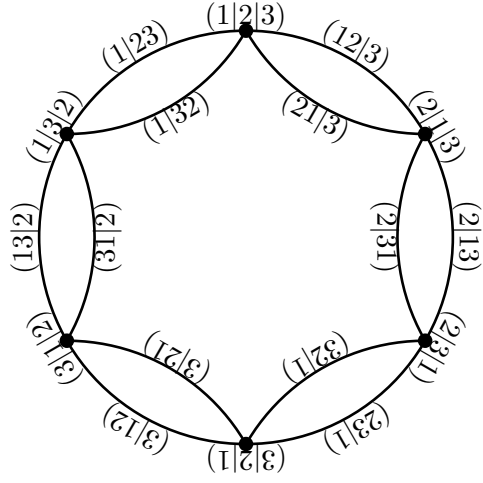
\begin{figure}
\begin{tikzpicture}[scale=2.75]
\draw [line width = 1pt, domain=150:210] plot ({1.732051+cos(\x)}, {sin(\x)});
\draw [line width = 1pt, domain=210:270] plot ({0.8660254+cos(\x)}, {1.500000+sin(\x)});
\draw [line width = 1pt, domain=270:330] plot ({-0.8660254+cos(\x)}, {1.500000+sin(\x)});
\draw [line width = 1pt, domain=-30:30] plot ({-1.732051+cos(\x)}, {sin(\x)});
\draw [line width = 1pt, domain=30:90] plot ({-0.8660254+cos(\x)}, {-1.500000+sin(\x)});
\draw [line width = 1pt, domain=90:150] plot ({0.8660254+cos(\x)}, {-1.500000+sin(\x)});
\draw [line width = 1pt] (0,0) circle (1);
\large
\node [rotate=-60] at (30:1.068) {$(2 | 1 | 3)$};
\node at (90:1.068) {$(1 | 2 | 3)$};
\node [rotate=60] at (150:1.068) {$(1 | 3 | 2)$};
\node [rotate=120]at (210:1.068) {$(3 | 1| 2)$};
\node [rotate=180]at (270:1.068) {$(3 | 2 | 1)$};
\node [rotate=240] at (330:1.068) {$(2 | 3 | 1)$};

\node[rotate=-90] at (0:1.068) {$(2 | 1  3)$};
\node [rotate=-90] at (0:0.67) {$(2 | 3  1)$};

\node [rotate=-30]at (60:1.068) {$(1 2 | 3)$};
\node [rotate=-30]at (60:0.67) {$(2 1 | 3 )$};

\node [rotate=30]at (120:1.068) {$(1 | 2  3)$};
\node [rotate=30]at (120:0.67) {$(1 | 3 2)$};


\node [rotate=90] at (180:1.068) {$(1  3 | 2)$};
\node [rotate=90] at (180:0.67) {$(3  1 | 2)$};

\node [rotate=150] at (240:1.068) {$(3 | 1 2)$};
\node [rotate=150] at (240:0.67) {$(3  | 2 1)$};

\node [rotate=210] at (300:1.068) {$(2 3 | 1 )$};
\node [rotate=210] at (300:0.67) {$(3  2 | 1)$};

\fill (30:1 ) circle[radius=1pt];
\fill (90:1 ) circle[radius=1pt];
\fill (150:1 ) circle[radius=1pt];
\fill (210:1 ) circle[radius=1pt];
\fill (270:1 ) circle[radius=1pt];
\fill (330:1 ) circle[radius=1pt];

\end{tikzpicture}
\caption{The cell complex $\cell(3,2)$.}
\label{fig:giant}
\end{figure}

In the remainder of the section, we define some spaces closely related to $\config(n, w)$.  We use these spaces in Section~\ref{sec:homotopy} for the proof that $\config(n, w)$ and $\cell(n, w)$ are homotopy equivalent, and in Section~\ref{sec:lower} to find lower bounds on the Betti numbers of these spaces.

Let $\config_d(n, w)$ denote the closed configuration space of $n$ disks of diameter $d$ in a strip of width $1$, so that $\config(n, w) = \config_1(n, w)$.  Rescaling gives homeomorphisms between $\config_d(n, w)$, $\config(n, w/d)$, and $\config_{d/w}(n, 1)$, so we can identify $\config(n, w)$ with $\config_{1/w}(n, 1)$.  Let $\config_0(n, 1)$ denote the union of all $\config_d(n, 1)$, i.e.,\ the configuration space of $n$ points in the strip of width $1$.

\begin{definition}
Given a symbol $\alpha \in \poset(n,w)$, we define an open set $U_{\alpha}$ in $\config_0(n, 1)$ as follows. Write $\alpha$ in block notation $\alpha = (c_1 \mid c_2 \mid \dots \mid c_m)$, and then define the open set $U_{\alpha}$ to be the set of points $(x_1, y_1, x_2, y_2, \dots, x_n, y_n) \in \R^{2n}$ such that the following conditions are met. 

\begin{itemize}
\item We have $0 < y_k < 1$ for all $y_k$.
\item Whenever $\sigma_k$ and $\sigma_{\ell}$ are in the same block and $k < \ell$, we have $y_{\sigma_k} > y_{\sigma_\ell}$.
\item Whenever $\sigma_k$ and $\sigma_{\ell}$ are in different blocks and $k < \ell$, we have $x_{\sigma_k} < x_{\sigma_\ell}$.
\item If $\sigma_k$ and $\sigma_{\ell}$ are in the same block, and $\sigma_{k'}$ and $\sigma_{\ell'}$ are in different blocks, then
$$|x_{\sigma_k} - x_{\sigma_\ell}| < |x_{\sigma_{k'}} - x_{\sigma_{\ell'}}|.$$
The indices are not assumed to be distinct---in particular it may be that $k=k'$. Intuitively, elements in the same block must cluster by $x$-coordinate.
\end{itemize}
\end{definition}

Given a symbol $\alpha$, let $w(\alpha)$ denote the largest number of elements in any block of $\alpha$.  For any natural number $w$, let $U(n, w)$ denote the union in $\config_0(n, 1)$ of all $U_\alpha$ for which $w(\alpha) \leq w$.  So $U(n, w)$ excludes exactly those configurations in $\config_0(n, 1)$ that have more than $w$ points on the same vertical line. For the sake of completeness and clarity later, we rewrite this definition as follows.

\begin{definition} \label{def:unw} We define $U(n,w)$ to the set of points $(x_1, y_1, x_2, y_2, \dots, x_n, y_n) \in \R^{2n}$ such that the following conditions are met.
\begin{itemize}
\item $(x_k,y_k) \neq (x_{\ell},y_{\ell})$ whenever $1 \le k < \ell \le n$, 
\item $0 < y_k < 1$ for every $1 \le k \le n$, and
\item no $w+1$ of the points have the same $x$-coordinate.
\end{itemize}
\end{definition}

%
%
%
%
%
\section{Homotopy equivalence} \label{sec:homotopy}

The main goal of this section is to prove the following theorem.

\begin{theorem} \label{thm:homoeq} For every $n, w \ge 1$, we have a homotopy equivalence $\config(n,w) \hequiv \cell(n,w)$.  Moreover, these homotopy equivalences for $w$ and $w+1$ commute up to homotopy with the inclusions $\cell(n, w) \hookrightarrow \cell(n, w+1)$ and $\config(n, w) \hookrightarrow \config(n, w+1)$.
\end{theorem}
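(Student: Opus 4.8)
The plan is to build the homotopy equivalence through the intermediate spaces $U(n,w) \subseteq \config_0(n,1)$ introduced above, using a nerve-type argument on the open cover $\{U_\alpha\}$. First I would verify that the sets $U_\alpha$, as $\alpha$ ranges over symbols with $w(\alpha) \le w$, form an open cover of $U(n,w)$: given a configuration with no $w{+}1$ points on a vertical line, one reads off a symbol by recording the left-to-right order of the vertical lines occupied (this gives the block structure and the order of blocks) and, within each cluster, the top-to-bottom order of the points (this gives the order inside each block); small perturbations keep the configuration in the same $U_\alpha$, and the clustering inequality $|x_{\sigma_k}-x_{\sigma_\ell}| < |x_{\sigma_{k'}}-x_{\sigma_{\ell'}}|$ is an open condition that can always be achieved by shrinking clusters. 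Next I would check that each nonempty finite intersection $U_{\alpha_1} \cap \dots \cap U_{\alpha_r}$ is contractible (in fact convex, or star-shaped, after a suitable change of coordinates separating the clustered $x$-differences from the macroscopic ones), and that $U_{\alpha} \cap U_{\beta} \neq \emptyset$ exactly when $\alpha$ and $\beta$ have a common upper bound in $\poset(n,w)$ — more precisely, the nerve of this cover is the order complex of $\poset(n,w)$, which is the barycentric subdivision of $\cell(n,w)$. The nerve lemma then gives $U(n,w) \simeq \cell(n,w)$, naturally in $w$ since enlarging $w$ just enlarges the index set of the cover by an order ideal, so the nerve inclusions match the subcomplex inclusions $\cell(n,w)\hookrightarrow\cell(n,w+1)$ on the nose.

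The remaining ingredient is $\config(n,w) \simeq U(n,w)$. Here I would use the rescaling identification $\config(n,w) \cong \config_{1/w}(n,1)$ and produce a deformation retraction of $U(n,w)$ onto $\config_{1/w}(n,1)$, or rather a homotopy equivalence: the idea is that a configuration in $U(n,w)$ has its points organized into at most-$w$-sized vertical clusters that are ``infinitely thin'' compared to the gaps between clusters, and one can inflate each point to a disk of diameter $1/w$ by stretching the horizontal direction — pushing clusters apart in the $x$-direction while keeping the vertical order — to make room. One must simultaneously flow the $y$-coordinates so that points in a common cluster, which are stacked with total vertical extent $<1$, end up with pairwise $y$-distance large enough that the disk constraint is governed by the $x$-separation; conversely a disk configuration already lies (after horizontally compressing clusters) in some $U_\alpha$. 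This map should be made canonical enough to commute with the $w\hookrightarrow w+1$ inclusions up to homotopy, which is automatic if the construction is defined uniformly and only depends on $w$ through the target disk diameter.

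The main obstacle I expect is the contractibility of the multiple intersections $U_{\alpha_1}\cap\dots\cap U_{\alpha_r}$ together with the precise identification of the nerve: the clustering condition is a strict inequality comparing \emph{all} within-block $x$-differences to \emph{all} between-block differences, and when several symbols $\alpha_i$ impose incompatible block structures one has to check carefully that the intersection is empty unless the $\alpha_i$ share a common refinement-type upper bound, and that when nonempty it retracts onto a model configuration. Handling this cleanly probably requires introducing hierarchical/tree coordinates on $U(n,w)$ (a configuration = a rooted structure of nested clusters with a scale at each level) so that each $U_\alpha$ becomes a product of simplex-like and Euclidean factors and intersections become transparently convex. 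The secondary difficulty is bookkeeping the naturality in $w$: I would set up all maps before fixing $w$ and only afterwards restrict to the order ideal $\poset(n,w)$, so that every square commutes by construction rather than by a separate homotopy argument; where an honest homotopy is unavoidable, the convexity of the fibers should supply a canonical straight-line one.
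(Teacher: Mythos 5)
Your proposal follows essentially the same route as the paper's proof: the same chain of intermediate spaces $\config(n,w)\cong\config_{1/w}(n,1)\hookrightarrow U(n,w)\to N(n,w)\to\cell(n,w)$, with the good-cover/nerve argument (each $U_\alpha$ is in fact convex, so no change of coordinates is needed) identifying $N(n,w)$ with the barycentric subdivision of $\cell(n,w)$, a cluster-separating flow for the retraction of $U(n,w)$ onto $\config_{1/w}(n,1)$, and naturality in $w$ via an extended nerve theorem. The one statement to correct is your criterion that $U_\alpha\cap U_\beta\neq\emptyset$ exactly when $\alpha$ and $\beta$ have a common upper bound in $\poset(n,w)$: the correct criterion, which your subsequent identification of the nerve with the order complex already presupposes, is that the intersection is nonempty iff the symbols are comparable (more generally, iff they form a chain) --- for instance $(1\mid 2\mid 3)$ and $(2\mid 1\mid 3)$ have the common upper bound $(1\,2\mid 3)$ yet the corresponding open sets are disjoint since they impose opposite orderings on $x_1,x_2$.
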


In Section~\ref{sec:nerve} we give an overview of the parts of the proof, and in Section~\ref{sec:homotopyproof} we prove the technical lemmas needed to finish the proof.  Then, in Section \ref{sec:consequences} we list a few of the immediate consequences of the homotopy equivalence.

\subsection{Proof overview of Theorem~\ref{thm:homoeq}}
\label{sec:nerve}

Our strategy is to use the nerve theorem.  We briefly review some of the terminology and ideas of nerve theory.  We say that an open cover $\mathcal{U} =(U_k)_{k \in \mathcal{K}}$ of a topological space $X$ is \emph{good} if every intersection of elements of $\mathcal{U}$ is either empty or contractible.  The \emph{nerve} $N(\mathcal{U})$ of $\mathcal{U}$ is the simplicial complex built by taking a vertex for each open set $U_k$ and a simplex for every collection of open sets with nonempty intersection.  For our purposes here, the indexing set $\mathcal{K}$ will always be finite, so the nerve is always finite-dimensional.

The nerve theorem says that the nerve $N(\mathcal{U})$ is homotopy equivalent to the original space $X$.  In particular, let $\{\phi_k\}_{k \in \mathcal{K}}$ be a partition of unity subordinate to $\mathcal{U}$, and let $\{v_k\}_{k \in \mathcal{K}}$ be the vertices of the nerve.  Then the map $r \co X \rightarrow N(\mathcal{U})$ defined by
\[r(x) = \sum_{k \in \mathcal{K}} \phi_k(x)v_k\]
is a homotopy equivalence.

To prove that the homotopy equivalences in Theorem~\ref{thm:homoeq} commute with the inclusions into wider strips, we use the following ``extended'' nerve theorem, which appears in Section 4 of Bauer, Edelsbrunner, Jablonski, and Mrozek's \cite{BEJM17}. (See also Proposition 4.2 in Ferry, Mischaikow, and Nanda's \cite{FMN14}.)

\begin{theorem}[Extended nerve theorem \cite{BEJM17}] \label{thm:extendnerve} Suppose that $\mathcal{U}=(U_k)_{k \in \mathcal{K}}$ is a good open cover of a topological space $X$, and that $\mathcal{V}=(V_{\ell})_{\ell \in \mathcal{L}}$ is a good open cover of a topological space $Y$. Suppose that $f\co X \to Y$ is continuous, and that $g\co \mathcal{K} \to \mathcal{L}$ is such that $f(U_k) \subseteq V_{g(k)}$ for every $k \in \mathcal{K}$. Let $\overline{g} \co N(\mathcal{U}) \to N(\mathcal{V})$ be the linear simplicial map induced by $g$. Then the following diagram commutes, up to homotopy.

\centering

\begin{tikzcd}
X \arrow[r, "f"] \arrow[d, "r"]
& Y \arrow[d, "r"] \\
N(\mathcal{U}) \arrow[r, "\overline{g}"]
& N(\mathcal{V})
\end{tikzcd}
\end{theorem}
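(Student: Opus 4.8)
The plan is to prove the homotopy-commutativity directly, by writing down an explicit affine homotopy inside the target nerve. The (unextended) nerve theorem is not used, and — worth noting — the goodness of the two covers is not needed for \emph{this} statement; goodness enters the full ``extended nerve theorem'' only to guarantee that the vertical maps $r$ are themselves homotopy equivalences. Throughout, fix a partition of unity $\{\phi_k\}_{k\in\mathcal{K}}$ subordinate to $\mathcal{U}$ and a partition of unity $\{\psi_\ell\}_{\ell\in\mathcal{L}}$ subordinate to $\mathcal{V}$, so that $r_{\mathcal{U}}(x)=\sum_k\phi_k(x)v_k$ and $r_{\mathcal{V}}(y)=\sum_\ell\psi_\ell(y)v_\ell$ are the nerve maps in the diagram. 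As a preliminary I would check that $\overline{g}$ is a well-defined simplicial map: if $\bigcap_{k\in\sigma}U_k\neq\emptyset$, choose $x$ in it; then $f(x)\in\bigcap_{k\in\sigma}f(U_k)\subseteq\bigcap_{k\in\sigma}V_{g(k)}=\bigcap_{\ell\in g(\sigma)}V_\ell$, so $g(\sigma)$ is a simplex of $N(\mathcal{V})$, and on geometric realizations $\overline{g}$ is the unique map that is affine on each simplex and sends $v_k\mapsto v_{g(k)}$.

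First I would compute the composite $\overline{g}\circ r_{\mathcal{U}}$. Because $r_{\mathcal{U}}(x)$ is a point of a simplex of $N(\mathcal{U})$ and $\overline{g}$ is affine there, $\overline{g}(r_{\mathcal{U}}(x))=\sum_k\phi_k(x)v_{g(k)}=\sum_\ell\chi_\ell(x)v_\ell$, where $\chi_\ell:=\sum_{k\in g^{-1}(\ell)}\phi_k$. Then $\{\chi_\ell\}_{\ell\in\mathcal{L}}$ is again a partition of unity on $X$ (nonnegative, $\sum_\ell\chi_\ell=\sum_k\phi_k=1$), and the key estimate is $\{\chi_\ell>0\}=\bigcup_{g(k)=\ell}\{\phi_k>0\}\subseteq\bigcup_{g(k)=\ell}U_k\subseteq f^{-1}(V_\ell)$, using $\operatorname{supp}\phi_k\subseteq U_k$ and $f(U_k)\subseteq V_{g(k)}$. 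In particular $\chi_\ell(x)>0$ forces $f(x)\in V_\ell$, exactly as $\psi_\ell(y)>0$ forces $y\in V_\ell$.

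The homotopy is then the straight-line one,
\[
H(x,u)\;=\;\sum_{\ell\in\mathcal{L}}\bigl[(1-u)\,\chi_\ell(x)+u\,\psi_\ell(f(x))\bigr]\,v_\ell,\qquad (x,u)\in X\times[0,1],
\]
which satisfies $H(\cdot,0)=\overline{g}\circ r_{\mathcal{U}}$ and $H(\cdot,1)=r_{\mathcal{V}}\circ f$. Its coefficients are nonnegative and sum to $1$, and by the previous paragraph every index $\ell$ with a positive coefficient at $(x,u)$ has $f(x)\in V_\ell$; hence that set of indices is contained in $\{\ell:f(x)\in V_\ell\}$, which is a genuine simplex of $N(\mathcal{V})$ since $f(x)$ lies in the corresponding intersection. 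Therefore $H(x,u)$ really lies in $N(\mathcal{V})$. Continuity of $H$ is immediate because $\mathcal{L}$ is finite, so $N(\mathcal{V})$ sits in $\R^{\mathcal{L}}$ with the subspace topology and $H$ is continuous coordinatewise. This establishes that the square commutes up to homotopy.

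The only step that genuinely needs care — and the one place the argument could fail — is verifying that the convex combination $H(x,u)$ never leaves $N(\mathcal{V})$; that is precisely the support estimate $\{\chi_\ell>0\}\subseteq f^{-1}(V_\ell)$ together with its analogue for $\psi$. Everything else is bookkeeping. (If one dropped the finiteness of $\mathcal{K},\mathcal{L}$ one would need the partitions of unity to be locally finite to get continuity into the realization, but in the paper's application these index sets are finite.) I would also remark on the more structural proof used in \cite{BEJM17}: form the Mayer--Vietoris blowup complex of each cover, use that it projects to the space via a homotopy equivalence (open cover) and to the nerve via a homotopy equivalence (good cover), note that a morphism of covers $(f,g)$ induces a strictly commuting map of blowup complexes over $f$ and over $\overline{g}$, and then invert the projections to $X$ and $Y$ to get the homotopy-commutative square with $r_{\mathcal{U}},r_{\mathcal{V}}$ the induced composites. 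I would mention this as the conceptual alternative but carry out the direct argument above.
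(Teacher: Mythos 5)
Your argument is correct, and it takes a different route from the paper: the paper does not prove this statement at all but imports it from \cite{BEJM17} (cf.\ also \cite{FMN14}), where the proof is structural, going through the Mayer--Vietoris blowup complex of each cover---the blowup projects to the space by a homotopy equivalence (for any open cover, via the partition of unity) and to the nerve by a homotopy equivalence (using goodness), and a morphism of covers induces a strictly commuting map of blowups, from which the homotopy-commutative square follows. You instead verify the square directly: the computation $\overline{g}\circ r_{\mathcal U}(x)=\sum_\ell\chi_\ell(x)v_\ell$ with $\chi_\ell=\sum_{k\in g^{-1}(\ell)}\phi_k$, the support estimate $\{\chi_\ell>0\}\subseteq f^{-1}(V_\ell)$ coming from $\operatorname{supp}\phi_k\subseteq U_k$ and $f(U_k)\subseteq V_{g(k)}$, and the observation that at each $(x,u)$ all indices carrying positive weight lie in $\{\ell: f(x)\in V_\ell\}$, which spans a simplex of $N(\mathcal V)$, so the straight-line homotopy stays inside the nerve. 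This is sound (with the finiteness of $\mathcal L$ covering continuity into the realization, as in the paper's applications), and your side remark is also accurate: goodness plays no role in the commutativity itself, only in making the vertical maps $r$ homotopy equivalences, so your version of the statement is in fact slightly more general. What the blowup-complex proof buys is a choice-free, functorial statement (independent of the partitions of unity and organized so that iterated refinements compose on the nose), which is convenient when one wants naturality for towers of covers; what your proof buys is a short, self-contained verification using nothing beyond the definition of $r$, which would suffice for the way the theorem is used in Section 3 of the paper.
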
 

The other two pieces we need for the proof of Theorem~\ref{thm:homoeq} are the following two theorems, which we prove in Section~\ref{sec:homotopyproof}.

\begin{theorem} \label{thm:deformretract}
$U(n, w)$ deformation retracts to $\config_{1/w}(n, 1)$.
\end{theorem}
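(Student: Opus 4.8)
The goal is to build a deformation retraction from $U(n,w)$ — the space of $n$ points in the open unit strip with at most $w$ on any vertical line — down to $\config_{1/w}(n,1)$, the space of $n$ disks of diameter $1/w$ in the same strip. The plan is to construct the retraction explicitly and in two stages, exploiting the combinatorial stratification of $U(n,w)$ by the ``vertical clustering pattern'' of a configuration.

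First I would set up notation for how a configuration in $U(n,w)$ organizes itself horizontally. Given a point configuration, group the indices into maximal runs that are ``close together'' in $x$-coordinate relative to the gaps separating them — more precisely, I want to recover something like the block structure of the symbols $\alpha$ with $w(\alpha)\le w$, so that each cluster has at most $w$ points and different clusters are well separated. Since no vertical line carries more than $w$ points, within each such cluster we can perturb the $x$-coordinates (keeping relative vertical order) so that the at-most-$w$ points become genuinely distinct in $x$; then scale all $x$-coordinates globally so that the minimum horizontal gap between distinct points is large enough that any two points not on the same (former) vertical line are horizontally separated by more than $1/w$, hence automatically have distance $\ge 1/w$, while two points formerly sharing a vertical line are now in a cluster of height $\le 1$ containing $\le w$ points so they can be spread vertically to be $1/w$-apart. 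The first stage of the retraction does exactly this: it is a flow that simultaneously (a) expands the $x$-axis to push clusters apart and (b) within each cluster, redistributes the $y$-coordinates to equalize the vertical gaps, all parametrized continuously so that at time $1$ the configuration lies in $\config_{1/w}(n,1)$, and at all times it stays in $U(n,w)$ (points never collide and no new vertical coincidences are created — in fact coincidences only get destroyed). The second stage is the straight-line homotopy on the disk configuration space $\config_{1/w}(n,1)$, which is convex in each coordinate once we have only horizontal-separation and vertical-spreading constraints left to satisfy — more carefully, I would use that the first-stage output already satisfies the disk constraints, so the composite only needs to restrict to a deformation retraction, and the identity on $\config_{1/w}(n,1)$ is recovered because on that subspace the clustering pattern is the finest one and the expansion/redistribution maps fix it.

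The key steps, in order: (1) define the cluster decomposition of a configuration and prove it varies continuously (or at least, that the ensuing retraction does — the decomposition itself jumps, but the formulas for the flow should be arranged to be continuous across strata, e.g.\ by using a continuous ``separation scale'' function rather than a discrete block count); (2) write down the horizontal-expansion flow and verify it never causes a collision and never merges vertical lines; (3) write down the within-cluster vertical-equalization flow and verify that for $\le w$ points in a strip of height $1$ it achieves pairwise $y$-distance $\ge 1/w$, hence combined with (2) the disk inequality $(x_i-x_j)^2+(y_i-y_j)^2\ge 1/w^2$; (4) check the endpoint conditions — at $t=0$ it is the identity, at $t=1$ the image is in $\config_{1/w}(n,1)$, and points of $\config_{1/w}(n,1)$ are fixed throughout (so it is a genuine deformation retraction, not just a homotopy equivalence); (5) note coefficient-field independence is automatic since it is an actual homotopy equivalence of spaces.

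The main obstacle will be step (1): making the retraction genuinely continuous on all of $U(n,w)$ despite the fact that the combinatorial clustering type is not locally constant — configurations can pass between strata where two clusters merge or split. I expect to handle this by not committing to a discrete block structure at all, but instead defining the flow through scale-invariant, symmetric formulas in the $x$-coordinates (for instance, rescaling $x_i \mapsto x_i/\delta$ where $\delta$ is the minimum nonzero pairwise horizontal distance, and spreading $y$'s using functions that depend continuously on which points are horizontally within $\del$ of each other, smoothed out near the stratum boundaries) — so the real work is checking that these formulas are simultaneously (i) well-defined and continuous, (ii) stay inside $U(n,w)$, and (iii) land in and fix $\config_{1/w}(n,1)$. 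A secondary subtlety is that the strip has open boundary $0<y_k<1$, so the vertical-equalization flow must be careful not to push points to $y=0$ or $y=1$; keeping the spread centered and scaled to fit strictly inside takes care of this, but it must be stated.
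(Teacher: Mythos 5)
You have correctly identified the crux --- making the retraction continuous across the strata where the horizontal clustering pattern changes --- but your proposed mechanism for resolving it does not work, and this is exactly where the real work lies. Your idea is to avoid committing to a discrete block structure by using ``scale-invariant, symmetric formulas,'' e.g.\ normalizing by the minimum nonzero pairwise horizontal distance $\delta$. But $\delta$ is not a continuous function on $U(n,w)$: if $x_1 = 0$, $x_2 = \varepsilon$, $x_3 = 1$, then $\delta = \varepsilon \to 0$ as $\varepsilon \to 0$, while at $\varepsilon = 0$ we have $\delta = 1$. So $x_i/\delta$ blows up and then jumps, and any flow built from it is discontinuous precisely at the stratum boundaries you are worried about. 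More fundamentally, whether a given pair of nearby points should be pushed apart horizontally or spread vertically is a discrete decision that genuinely depends on the stratum, so no single global formula of the kind you sketch will interpolate correctly; the smoothing you allude to (``smoothed out near the stratum boundaries'') is the entire content of the theorem and is left unproved.

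The paper's resolution is different in kind and worth internalizing: it defines, for each symbol $\alpha$, a vector field $v_\alpha$ on the open set $U_\alpha$ alone (where the block structure is unambiguous) that increases the tautological function $\tau(p) = \max\{d : p \in \config_d(n,1)\}$ at rate at least $2\sqrt{1/w(\alpha) - \tau(p)}$, and then glues these by a partition of unity subordinate to the cover $\{U_\alpha\}$. The gluing works because, at each $p$, the set of directions increasing $\tau$ at the required rate is an intersection of half-spaces (one per tangency in the disk picture), hence convex, so the convex combination $\sum_\alpha \phi_\alpha(p)\, v_\alpha(p)$ still lies in it; the rate bound $2\sqrt{1/w - \tau}$ then guarantees the flow reaches $\config_{1/w}(n,1)$ in finite time. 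Your proposal is missing both devices (the partition of unity over the good cover, and the convexity-plus-rate argument), and without them or some substitute the construction does not go through. A smaller error: your claim that the retraction fixes $\config_{1/w}(n,1)$ because ``on that subspace the clustering pattern is the finest one'' is false --- configurations in $\config_{1/w}(n,1)$ can have coinciding $x$-coordinates --- though this is repairable by simply declaring the flow to stop once $\tau \ge 1/w$.
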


\begin{theorem} \label{thm:barycentric}
The cover $\{U_\alpha\}_{w(\alpha) \leq w}$ of $U(n, w)$ is good, and its nerve $N(n, w)$ is the barycentric subdivision of the regular cell complex $\cell(n, w)$.
\end{theorem}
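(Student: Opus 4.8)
The plan is to verify the two assertions separately, handling goodness of the cover first and the identification of the nerve second. For goodness, I would first record a combinatorial description of when a finite intersection $U_{\alpha_1} \cap \dots \cap U_{\alpha_k}$ is nonempty, and then show each nonempty intersection is contractible. The key observation is that each $U_\alpha$ is cut out by strict inequalities among the $y$-coordinates (a choice of linear order on the points within each block), strict inequalities among $x$-coordinates (an ordering of the blocks), and strict inequalities comparing $|x_{\sigma_k}-x_{\sigma_\ell}|$ for same-block pairs against all different-block pairs. I would argue that $U_{\alpha_1}\cap\dots\cap U_{\alpha_k}$ is nonempty precisely when the $\alpha_i$ have a common upper bound $\beta$ in $\poset(n,w)$, i.e.\ a symbol $\beta$ with $w(\beta)\le w$ such that each $\alpha_i\le\beta$: the blocks of $\beta$ refine compatibly all the blockings, and the $y$-orderings within blocks of $\beta$ are simultaneously consistent with all the within-block orders imposed by the $\alpha_i$. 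For contractibility, on a nonempty intersection I would exhibit an explicit deformation retraction onto a point: the region is (the intersection with $\{0<y_k<1\}$ of) an open subset of $\R^{2n}$ defined by a consistent system of linear inequalities in the $y$'s together with the ``clustering'' inequalities in the $x$'s. One clean way is to first contract the $x$-coordinates by a straight-line homotopy toward a canonical configuration (pushing each block's points very close together and spacing the blocks out along the $x$-axis, which is compatible with all the strict $x$-inequalities and clustering inequalities since these form a convex-ish cone once the combinatorial type is fixed), and then contract the $y$-coordinates inside the open convex polytope they are confined to. Since fixing a combinatorial type makes both the $x$-part and the $y$-part star-shaped (indeed convex after a harmless reparametrization of the clustering constraints), the intersection is contractible.

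For the identification of the nerve, I would use the standard fact that the barycentric subdivision of a regular CW complex has as its simplices the chains in the face poset. So it suffices to produce a bijection between the symbols $\alpha$ with $w(\alpha)\le w$ and the open sets $U_\alpha$ (immediate, by definition of the cover) such that a collection $\{\alpha_1,\dots,\alpha_k\}$ has $U_{\alpha_1}\cap\dots\cap U_{\alpha_k}\ne\emptyset$ if and only if $\{\alpha_1,\dots,\alpha_k\}$ is a chain in $\poset(n,w)$. The ``chain $\Rightarrow$ nonempty'' direction is easy once one has the explicit contractible models above: for a chain $\alpha_1<\dots<\alpha_k$, the top element $\alpha_k$ is a common upper bound, so the intersection is nonempty (in fact one can check $U_{\alpha_i}\supseteq U_{\alpha_k}$-adjacent behavior, but all that is needed is nonemptiness). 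The converse, ``nonempty $\Rightarrow$ chain'', is the crux: I must show that if $\alpha_1,\dots,\alpha_k$ have a common upper bound in $\poset(n)$ then they are totally ordered. Here I would argue that a common upper bound $\beta$ forces, for each pair $\alpha_i,\alpha_j$, their blockings to be nested rather than merely compatible, using the clustering inequalities: a point $p$ in the intersection simultaneously satisfies the clustering condition for $\alpha_i$ and for $\alpha_j$, and I would extract from this that the block partition of one refines that of the other (if two points are in the same $\alpha_i$-block they are $x$-close relative to the $\alpha_i$-block structure, and chasing the inequalities shows one partition must refine the other); then, among two symbols whose block partitions are nested and whose within-block $y$-orders are both realized by $p$, the coarser one is obtained from the finer one by shuffles respecting relative order, so $\alpha_i$ and $\alpha_j$ are comparable. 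Once all pairs are comparable, a finite poset-antisymmetry argument gives a chain.

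I expect the main obstacle to be the ``nonempty $\Rightarrow$ nested block partitions'' step, i.e.\ showing that the clustering inequalities are rigid enough to prevent two incomparable symbols from having overlapping open sets. The subtlety is that compatibility of blockings (no forced equalities, just a common refinement existing) is weaker than nestedness, and one must genuinely use the fourth bullet in the definition of $U_\alpha$ — that \emph{every} same-block $x$-gap is smaller than \emph{every} different-block $x$-gap — to rule out the "crossing" configurations. I would isolate this as a lemma: if a point $p\in\config_0(n,1)$ lies in both $U_\alpha$ and $U_{\alpha'}$, then the partition underlying $\alpha$ refines that underlying $\alpha'$ or vice versa. The proof would proceed by supposing two points $a,b$ lie in a common $\alpha$-block but in different $\alpha'$-blocks, and a third point $c$ (from the analysis) producing contradictory inequalities $|x_a-x_b|<|x_a-x_c|$ and $|x_a-x_c|<|x_a-x_b|$; the bookkeeping of which indices play the roles of $k,\ell,k',\ell'$ in the two clustering conditions is where the care is needed. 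Everything else — goodness of individual pairwise/triple intersections, the star-shaped deformation retractions, and the combinatorial description of barycentric subdivisions — is routine given this lemma.
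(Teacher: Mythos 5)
Your high-level architecture matches the paper's (show the cover is good; show that $U_{\alpha_1}\cap\dots\cap U_{\alpha_k}\neq\emptyset$ iff the $\alpha_i$ form a chain in $\poset(n,w)$; invoke the description of barycentric subdivision via chains in the face poset). But the proposal contains a false intermediate claim that would derail it. You assert that the intersection is nonempty \emph{precisely when} the $\alpha_i$ have a common upper bound in $\poset(n,w)$, and later that a common upper bound forces the $\alpha_i$ to be totally ordered. Neither is true: the symbols $(1\,2\mid 3)$ and $(1\mid 2\,3)$ are incomparable yet both lie below $(1\,2\,3)$, and their open sets are disjoint --- membership in $U_{(1\,2\mid 3)}$ forces $|x_1-x_2|<|x_2-x_3|$ while membership in $U_{(1\mid 2\,3)}$ forces the reverse inequality. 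This matters doubly. First, it makes your justification of the direction ``chain $\Rightarrow$ nonempty'' circular: you deduce nonemptiness from the existence of a common upper bound, which is exactly the false equivalence. This direction genuinely requires constructing a point realizing all symbols of the chain simultaneously; the paper does this with a hierarchical spacing (within-block gaps set to distinct powers of $3$ according to the step of the chain at which blocks merge, with larger gaps between blocks), and some such explicit construction is unavoidable. Second, if the false characterization were used to identify the nerve, the nerve would acquire edges between incomparable symbols and would not be the barycentric subdivision.

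The part of your plan that is sound is the crux you correctly isolate: from a single point $p\in U_\alpha\cap U_{\alpha'}$, the clustering inequalities force one block partition to refine the other, and then the $x$-order of blocks and the $y$-order within blocks (both read off from $p$) force comparability in $\poset(n,w)$; pairwise comparability of $\{\alpha_1,\dots,\alpha_k\}$ gives a chain. This is equivalent in content to the paper's approach, which packages it as an algorithm (grow intervals of radius $\rho$ around the $x$-coordinates to produce a chain of ordered partitions, then lift using the $y$-coordinates) that computes the full set $\mathcal{A}_p=\{\alpha: p\in U_\alpha\}$ and exhibits it as a chain. Finally, your contractibility argument is far more elaborate than necessary: each $U_\alpha$ is already convex as written, since on the set where the block-order inequalities hold each different-block gap $|x_{\sigma_{k'}}-x_{\sigma_{\ell'}}|$ is a linear function and each same-block gap $|x_{\sigma_k}-x_{\sigma_\ell}|$ is convex, so all defining conditions are of the form ``convex $<$ linear'' or are linear; intersections of convex sets are convex, and goodness is immediate with no deformation retraction needed.
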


Assuming these two theorems, we can finish the proof of Theorem~\ref{thm:homoeq}.

\begin{proof}[Proof of Theorem~\ref{thm:homoeq}]
The homotopy equivalence between $\config(n, w)$ and $\cell(n, w)$ is a composition of homotopy equivalences
\[\config(n, w) \rightarrow \config_{1/w}(n, 1) \rightarrow U(n, w) \rightarrow N(n, w) \rightarrow \cell(n, w).\]
Specifically, let $\phi \co \config(n, w) \rightarrow \config_{1/w}(n, w)$ be the rescaling map, which is a homeomorphism.  The inclusion map $i \co \config_{1/w}(n, w) \hookrightarrow U(n, w)$ is a homotopy equivalence because its homotopy inverse is the deformation retraction from Theorem~\ref{thm:deformretract}.  The map $r \co U(n, w) \rightarrow N(n, w)$ is the homotopy equivalence given by the nerve theorem.  And, applying Theorem~\ref{thm:barycentric} we let $b \co N(n, w) \rightarrow \cell(n, w)$ be the homeomorphism that undoes the barycentric subdivision.

We check that each of these maps commutes up to homotopy with the inclusions resulting from mapping each space to the corresponding space with $w$ replaced by $w+1$.  The diagram

{\centering

\begin{tikzcd}
\config(n,w) \arrow[r, "i"] \arrow[d, "\phi"] & \config(n,w+1) \arrow[d, "\phi"] \\
\config_{1/w}(n,1) \arrow[r, "i"] & \config_{1/(w+1)}(n,1)
\end{tikzcd}

}
\noindent commutes because inclusion commutes with rescaling.  The diagram \\

{\centering

\begin{tikzcd}
\config_{1/w}(n,1) \arrow[r, "i"] \arrow[d, "i"] & \config_{1/(w+1)}(n,1) \arrow[d, "i"] \\
 \arrow[r, "i"]
U(n,w) & U(n,w+1) 
\end{tikzcd}

}
\noindent commutes because all of the maps are inclusions. The diagram\\

{\centering

\begin{tikzcd}
U(n,w) \arrow[r, "i"] \arrow[d, "r"] & U(n,w+1) \arrow[d, "r"] \\
 \arrow[r, "i"]
N(n,w) & N(n,w+1)
\end{tikzcd}

}
\noindent commutes by the extended nerve theorem, Theorem \ref{thm:extendnerve}.  And, the diagram\\

{\centering

\begin{tikzcd}
N(n,w) \arrow[r, "i"] \arrow[d, "b"] & N(n,w+1) \arrow[d, "b"] \\
 \arrow[r, "i"]
\cell(n,w) & \cell(n,w+1)
\end{tikzcd}

}
\noindent commutes because barycentric subdivision is functorial.
\end{proof}

\subsection{Technical lemmas for Theorem \ref{thm:homoeq}} 
\label{sec:homotopyproof}

First we prove Theorem~\ref{thm:deformretract}, which says that $U(n, w)$ deformation retracts to $\config_{1/w}(n, 1)$.  Let $\tau \co \config_0(n, 1) \rightarrow (0, \infty)$ denote the function defined as follows; the paper~\cite{BBK14} calls this the \emph{tautological function}.  For any configuration $p$, we set $\tau(p)$ to be the maximum value $d$ such that $p \in \config_d(n, 1)$.  Intuitively, we take the points in the configuration $p$, and consider disks of growing radius with those points as centers.  When the disks first become tangent to each other or to the lines $y = 0$ or $y=1$, the diameter of those disks is $\tau(p)$.

Because $\config_{1/w}(n, 1)$ is the subset of $U(n, w)$ where $\tau \geq 1/w$, our strategy for the deformation retraction is to flow along a vector field that increases $\tau$.  In the following lemma, we construct such a vector field for each $U_\alpha$, and then we combine the vector fields for various $\alpha$ together.

\begin{lemma} \label{lem-valpha}
For each set $U_\alpha$, there is a continuous vector field $\{v_\alpha(p)\}_{p \in U_\alpha}$, with the property that if $\tau(p) < 1/w(\alpha)$, then $\tau$ is increasing at rate at least $2\sqrt{1/w(\alpha) - \tau(p)}$ in the direction $v_\alpha(p)$, that is, $D\tau_p(v_\alpha(p)) \geq 2\sqrt{1/w(\alpha) - \tau(p)}$.
\end{lemma}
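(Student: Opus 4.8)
The strategy is to build $v_\alpha$ explicitly as a sum of two kinds of motions: one that spreads out the $x$-coordinates and one that spreads out the $y$-coordinates, within the combinatorial constraints imposed by membership in $U_\alpha$. Recall that in $U_\alpha = (c_1 \mid \dots \mid c_m)$, points in different blocks are ordered and separated in $x$, while points within a block are clustered together in $x$ and are ordered in $y$. The key geometric observation is that if $\tau(p) = d < 1/w(\alpha)$, then the obstruction to growing the disks is a tangency, and a tangency between disks whose centers differ by at most $d$ in $x$-coordinate — so in particular within a single block, or between a disk and the lines $y=0, y=1$ — can always be relieved. I would first set up a normalized ``target configuration'' for each block: within block $c_i$ with $k \le w$ elements listed in $y$-decreasing order, place the points at heights $\tfrac{1}{2k}, \tfrac{3}{2k}, \dots, \tfrac{2k-1}{2k}$ and at a common $x$-coordinate, and space consecutive blocks far apart in $x$. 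The vector field $v_\alpha(p)$ points from $p$ toward this target (or toward a rescaled/interpolated version of it), rescaled so the derivative bound comes out right.

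**Key steps, in order.** First I would verify that the straight-line homotopy from any $p \in U_\alpha$ toward the target configuration stays inside $U_\alpha$: this is a convexity check, since all the defining conditions of $U_\alpha$ (strict $y$-orderings within blocks, strict $x$-orderings between blocks, and the clustering inequality $|x_{\sigma_k}-x_{\sigma_\ell}| < |x_{\sigma_{k'}}-x_{\sigma_{\ell'}}|$) are preserved under convex combinations when both endpoints satisfy them — the target must be chosen with enough $x$-separation between blocks that this holds. Second, I would compute $D\tau_p(v_\alpha(p))$ directly. Because $\tau$ is the minimum over pairwise-distance functions and distance-to-boundary functions, and $\tau$ is achieved at some tangency when $\tau(p) < 1/w(\alpha)$, I'd use a Clarke-subdifferential / one-sided derivative argument: $\tau$ is increasing at rate at least the minimum, over all pairs/boundaries realizing the current minimum, of the directional derivative of that particular distance function. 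For a within-block pair the target heights are spaced $1/k \ge 1/w(\alpha)$ apart, so moving toward the target strictly increases that distance; for a disk-to-line tangency the target height is bounded away from $0$ and $1$ by $1/(2k) \ge 1/(2w(\alpha))$. Between-block pairs are never the binding constraint once $\tau(p) < 1/w(\alpha)$ because their $x$-separation already exceeds $1/w(\alpha)$... wait, that's not quite automatic — I'd need the target to enforce $x$-separation at least $1/w(\alpha)$ between adjacent blocks and check it's maintained along the flow. Third, I would normalize the speed: scale $v_\alpha$ so that the rate of increase of the binding distance is exactly (at least) $2\sqrt{1/w(\alpha) - \tau(p)}$; since $\sqrt{1/w(\alpha) - \tau(p)} \to 0$ as $\tau(p) \to 1/w(\alpha)$, this lets the vector field vanish continuously at the boundary of the region $\{\tau < 1/w(\alpha)\}$, which is exactly what's needed for the combined flow in Theorem~\ref{thm:deformretract} to be continuous and to terminate in finite time (the $\sqrt{\cdot}$ growth rate makes $\tau$ reach $1/w(\alpha)$ in finite time, as in the ODE $\dot u = 2\sqrt{c-u}$).

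**Main obstacle.** The hard part will be the careful bookkeeping in the derivative computation when the minimum defining $\tau(p)$ is attained at several tangencies simultaneously, so $\tau$ is not differentiable. I expect to handle this by working with the lower directional (Dini) derivative and showing that \emph{every} active constraint has directional derivative along $v_\alpha(p)$ bounded below by the stated quantity; this reduces to a finite min over smooth functions and hence the lower derivative of $\tau$ itself inherits the bound. A secondary subtlety is choosing the target configuration — particularly the between-block $x$-spacing — uniformly enough that (a) the flow stays in $U_\alpha$ and (b) between-block pairs are genuinely non-binding; I'd resolve this by making the target depend only on the block structure of $\alpha$ (not on $p$) with spacing, say, $2$ between block centers, which dominates any possible value of $\tau(p) \le 1$. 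Continuity of $v_\alpha$ in $p$ is then immediate since it's an affine function of $p$ (toward a fixed target) times a scalar that depends continuously on $p$ through $\tau(p)$.
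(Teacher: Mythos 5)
Your high-level architecture (normalize $y$-coordinates within blocks, push blocks apart in $x$, verify a per-measurement derivative bound, take the min over active constraints) matches the paper's. But the specific construction---a vector field pointing at a \emph{fixed} target configuration---fails at the key step, namely that every short measurement has positive directional derivative along $v_\alpha(p)$. The root of the problem is that the derivative of $|b-a|$ along $v$ is $\langle \dot b - \dot a,\, b-a\rangle/|b-a|$, so each component of the motion helps only in proportion to the corresponding component of the \emph{current} difference vector $b-a$. Concretely: (i) your target places all points of a block at a common $x$-coordinate, so for two same-block points that are nearly at the same height but separated horizontally by, say, $0.3$ (with $w(\alpha)=2$, so this is a short measurement and can equal $\tau(p)$), the displacement toward the target gives $\langle (b'-a')-(b-a),\, b-a\rangle \approx -0.09 < 0$; the distance initially \emph{decreases}, and no positive rescaling repairs a negative derivative. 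The paper avoids this by making the within-block correction $y(p)-p$ purely vertical, leaving $x$-coordinates unchanged, so the troublesome horizontal term vanishes identically.

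(ii) For between-block short measurements (which you correctly note can be binding), a fixed target spacing of $2$ does not suffice: if two adjacent-block points have horizontal separation $\epsilon$ and vertical separation $0.3$, the horizontal term in the derivative is only $O(\epsilon)$, while the vertical normalization may contribute a negative term of order $1$; the sum is negative for small $\epsilon$. This is why the paper chooses the horizontal displacement $x(p)-p$ \emph{after} fixing $\lambda(p)$ and $y(p)$, and lets the amount of extra space between two consecutive blocks depend on $p$ and be as large as needed (exploiting $b_1-a_1>0$ strictly on $U_\alpha$) to make $((b_1'-a_1')-(b_1-a_1))(b_1-a_1)$ dominate the vertical contribution. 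Your Dini-derivative handling of the non-smooth min and the $\sqrt{\cdot}$ normalization for finite-time termination are fine, but the construction needs these two repairs before the derivative estimates go through.
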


(Although $\tau$ is not a smooth function, it turns out that its directional derivative in every direction is well-defined, so the expression $D\tau_p(v_\alpha(p))$ makes sense.)  Before proving this technical lemma, we show how it gives the deformation retraction needed for Theorem~\ref{thm:deformretract}.

\begin{proof}[Proof that Lemma~\ref{lem-valpha} implies Theorem~\ref{thm:deformretract}]
Let $w$ be a natural number.  To define the deformation retraction from $U(n, w)$ to $\config_{1/w}(n, 1)$, we combine the vector fields $v_\alpha(p)$ from Lemma~\ref{lem-valpha} as follows to produce a vector field $v(p)$ on $U(n, w) \setminus \config_{1/w}(n, 1)$.  Let $\phi_\alpha$ be a partition of unity subordinate to the covering $\{U_\alpha\}_{w(\alpha) \leq w}$ of $U(n, w)$.  That is, for any $p \in U(n, w)$, each $\phi_\alpha(p)$ is between $0$ and $1$, with $\phi_\alpha(p) = 0$ if $p \not\in U_\alpha$, and $\sum_{\alpha} \phi_\alpha(p) = 1$.  For each $p \in U(n, w) \setminus \config_{1/w}(n, 1)$, we define
\[v(p) = \sum_\alpha \phi_\alpha(p) v_\alpha(p).\]

We claim that for each $p$, if $\tau(p) < 1/w$ then we have $D\tau_p(v(p)) \geq 2\sqrt{1/w - \tau(p)}$; that is, $\tau(p)$ increases at rate at least $2\sqrt{1/w - \tau(p)}$ in the direction $v(p)$.  We already have that each vector field $v_\alpha(p)$ increases $\tau(p)$ at this rate:
\[D\tau_p(v_\alpha(p)) \geq  2\sqrt{1/w(\alpha) - \tau(p)} \geq 2\sqrt{1/w - \tau(p)}.\]
  For the convex combination $v_\alpha(p)$, we look at the configuration $p$ of disks of diameter $\tau(p)$, and find all the places where two disks are tangent or where a disk is tangent to the boundary of the strip.  For each of these tangencies, there is a linear functional on the tangent space at $p$, measuring how the distance between those two disks or between that disk and the boundary changes as $p$ varies.  The set of vectors at $p$ that increase $\tau$ is the intersection of half-spaces, one for each of the tangencies.  And, the set of vectors at $p$ that increase $\tau$ at rate at least $2\sqrt{1/w - \tau(p)}$ is the intersection of half-spaces, one for each of the tangencies, cut out by planes that are parallel to the corresponding planes for finding the set of vectors at $p$ that increase $\tau$.  Thus, this latter set is convex.  The vector $v(p) = \sum_\alpha \phi_\alpha(p) v_\alpha(p)$ is a convex combination of vectors $v_\alpha$ that lie in the convex set, so $v(p)$ is also in the set.

The deformation retraction is given by flowing along $v(p)$ until we reach the set $\config_{1/w}(n, 1)$.  To see how fast it finishes, let $p(t)$ be one of the trajectories, and let $\delta(t) = 1/w - \tau(p(t))$.  Then we have that $\delta(t)$ is decreasing at a rate
\[\abs{\delta'(t)} \geq 2\sqrt{\delta(t)},\]
so we have
\[\frac{\delta'(t)}{2\sqrt{\delta(t)}} \leq -1,\]
or
\[\frac{d}{dt}\sqrt{\delta(t)} \leq -1.\]
Thus the quantity $\sqrt{\delta(t)}$ decreases from at most $\sqrt{1/w}$ to $0$ at rate at least $1$, and so the deformation retraction finishes in time at most $\sqrt{1/w} \leq 1$.
\end{proof}

The bulk of Theorem~\ref{thm:deformretract} is in constructing the vector field $v_\alpha(p)$ to prove Lemma~\ref{lem-valpha}.

\begin{proof}[Proof of Lemma~\ref{lem-valpha}]
Given any $\alpha$, we construct $v_\alpha(p)$ as 
\[v_\alpha(p) = \lambda(p)(y(p) - p) + (x(p) - p),\]
where $x \co U_\alpha \rightarrow U_\alpha$ and $y \co U_\alpha \rightarrow U_\alpha$ specify configurations, and $\lambda \co U_\alpha \rightarrow \mathbb{R}_{\geq 0}$ specifies a nonnegative scaling.  The configuration $y(p)$ differs from $p$ by moving the points vertically, and the configuration $x(p)$ differs from $p$ by moving the points horizontally such that points in the same block of $\alpha$ keep their relationships but the horizontal space between blocks may increase.  We first construct $y(p)$, then construct $\lambda(p)$, and then construct $x(p)$ so that the resulting vector has the desired properties.  

To analyze whether the vector $v_\alpha(p)$ increases $\tau(p)$ quickly enough, we observe that $\tau(p)$ is the minimum of all the distances between pairs of points in $p$ and twice all the distances between the points and the boundary of the strip.  We refer to these distances as the \emph{measurements}, and refer to each measurement less than $1/w(\alpha)$ as \emph{short}.  It suffices to construct $v_\alpha(p)$ such that for each short measurement $m(p)$, moving along the vector $v_\alpha(p)$ increases $m(p)$ at rate at least $2\sqrt{1/w(\alpha) - m(p)}$.

We start by constructing $y(p)$ in such a way that for each short measurement within any block, moving from $p$ to $y(p)$ increases that short measurement.  Let $y(p) \in U_\alpha$ be a configuration in which the points have the same $x$--coordinates as in $p$, but the $y$--coordinates are evenly spaced within each block, in the following way.  If a block has $k$ elements, then the $y$--coordinates of those points in $y(p)$ are $1 - \frac{1}{2}k, 1 - \frac{3}{2}k, \ldots, \frac{3}{2}k, \frac{1}{2}k$, so that the intervals of size $\frac{1}{k}$ around these values exactly tile the interval from $0$ to $1$.  In the new configuration $y(p)$, each distance between points in the same block is at least $1/w(\alpha)$, and twice the distance from each point to the boundary of the strip is at least $1/w(\alpha)$.

We still need to check that the vector at $p$ given by $y(p) - p$ infinitesimally increases each short measurement within any block.  Consider any short measurement in $p$ given by distance from a point to the boundary of the strip.  In $y(p)$, this measurement is no longer short, so the vector $y(p) - p$ must move the relevant point away from the boundary of the strip, increasing that measurement.  

Next, consider any short measurement in $p$ given by distance between two points in the same block.  Let $a$ and $b$ be the points in $p$, and let $a'$ and $b'$ be the corresponding points in $y(p)$.  To determine whether the vector $y(p) - p$ increases this measurement, we need to look at the triangle formed by vectors $b-a$ and $b'-a'$.  The measurement increases if and only if the angle at $b-a$ is obtuse; that is, if we have the inequality of inner products $\langle b'-a', b-a\rangle > \langle b-a, b-a \rangle$.  Because our two points are in the same block, the $x$--coordinates of $b-a$ and $b'-a'$ are the same, while the $y$--coordinate of $b' - a'$ has the same sign as that of $b-a$ but has greater magnitude because our measurement is no longer short in $y(p)$.  Thus, we do have the desired inequality.

Because the vector $y(p) - p$ infinitesimally increases each short measurement within any block, we can choose the scaling $\lambda(p)$ such that if $m(p)$ is a short measurement within any block, then $\lambda(p)(y(p) - p)$ increases $m(p)$ at a rate of at least $2\sqrt{1/w(\alpha) - m(p)}$.  We choose the minimum possible scaling $\lambda(p)$ with this property.  In particular, if $p$ has no short measurements, or if every short measurement in $p$ is a distance between points in different blocks, then $\lambda(p) = 0$.

Next we choose the configuration $x(p)$, which differs from $p$ by sliding some pairs of consecutive blocks away from each other.  The vector $x(p) - p$ does not change any measurements within any blocks.  Thus, our task is to choose $x(p)$ such that the resulting vector $v_\alpha(p) = \lambda(p)(y(p) - p) + (x(p) - p)$ increases each short measurement between blocks by the desired amount; the measurements within blocks are already taken care of.  We choose $x(p)$ such that the left-most block of $p$ does not move.  From this assumption, the configuration $x(p)$ is determined by specifying the amount of horizontal space between each pair of consecutive blocks.

Consider two consecutive blocks.  If none of the distances between one point in the first block and another point in the second block are short measurements, then we leave the distance between those two blocks the same.  Otherwise, consider such a short measurement.  Let $a$ and $b$ be the points in $p$ that give the measurement, with $a$ in the left block and $b$ in the right block.  Suppose our vector $v_\alpha(p)$ moves our points toward $a'$ and $b'$; that is, for some small $\varepsilon > 0$ such that $p + \varepsilon \cdot v_\alpha(p) \in U_\alpha$, let $a'$ and $b'$ be the corresponding points in the configuration $p + \varepsilon \cdot v_\alpha(p)$.  (Note that whether $p + \varepsilon \cdot v_\alpha(p)$ is in $U_\alpha$ depends on $\lambda(p)$ and $y(p)$ but not on $x(p)$.)  Then, as above, the measurement increases if and only we have $\langle b' - a', b-a \rangle > \langle b-a, b-a\rangle$, or equivalently if $\langle (b' - a') - (b-a), b-a \rangle > 0$.  In fact, the rate that $\tau$ increases in the direction $v_\alpha(p)$ is $1/\varepsilon$ times the length of the projection of $(b' - a') - (b-a)$ onto the direction $b-a$.  We claim that we can choose $x(p)$ such that this projection is sufficiently long.

Let $a = (a_1, a_2)$, $b = (b_1, b_2)$, $a' = (a'_1, a'_2)$, and $b' = (b'_1, b'_2)$.  Then $a_2$, $b_2$, $a'_2$, and $b'_2$ are determined by our choice of $\lambda(p)$, $y(p)$, and $\varepsilon$.  We have that $b_1 - a_1$ is positive (because $a$ is to the left of $b$), and that $(b'_1 - a'_1) - (b_1 - a_1)$ is equal to $\varepsilon$ times the amount of additional space in $x(p)$ between the two blocks, compared to the space in $p$.  By choosing this amount of additional space to be large, we may cause the quantity $((b'_1 - a'_1) - (b_1 - a_1))(b_1 - a_1)$ to be arbitrarily large, while keeping the quantity $((b'_2 - a'_2) - (b_2 - a_2))(b_2 - a_2)$ the same, thus making the inner product $\langle (b' - a') - (b-a), b-a \rangle$ as positive as we want.  Thus, there is some choice of how much more space $x(p)$ should have than $p$ between the two blocks, in order to have the property that for each short measurement $m(p)$ between the two blocks, the vector $v_{\alpha}(p)$ increases this measurement at rate at least $2\sqrt{1/w(\alpha) - m(p)}$; we choose the least possible such amount of additional space.

Repeating this computation for each pair of consecutive blocks gives $x(p)$ and thus completes the construction of $v_\alpha(p)$.  The selection of $\lambda(p)$ and $y(p)$ guarantees that $v_\alpha(p)$ increases each short measurement $m(p)$ within a block at rate at least $2\sqrt{1/w(\alpha) - m(p)}$, and the selection of $x(p)$ guarantees that $v_\alpha(p)$ increases each short measurement $m(p)$ between two blocks at rate at least $2\sqrt{1/w(\alpha) - m(p)}$.  Thus, $v_\alpha(p)$ increases the function $\tau(p)$, equal to the minimum of all these measurements, at rate at least $2\sqrt{1/w(\alpha) - \tau(p)}$.
\end{proof}

This completes the proof of Theorem~\ref{thm:deformretract}, the deformation retraction from $U(n, w)$ to $\config_{1/w}(n, 1)$.  Next we prove Theorem~\ref{thm:barycentric}, which implies using the nerve theorem that $U(n, w)$ is homotopy equivalent to $\cell(n, w)$.  The main thing to check is that the intersection 
$$ U_{\alpha_1} \cap U_{\alpha_2} \cap \dots \cap U_{ \alpha_k} $$
is nonempty if and only if the symbols
$$ \{ \alpha_1, \alpha_2, \dots, \alpha_k \}$$ form a chain in $\poset(n,w)$; thus, a simplex in the nerve $N(n, w)$ corresponds to a chain of incident cells in $\cell(n, w)$.

\begin{proof}[Proof of Theorem~\ref{thm:barycentric}]

Every $U_{\alpha}$ is a convex open subset of $\R^{2n}$, so every $U_{\alpha}$ is contractible and since the intersection of convex sets is convex, every nonempty intersection is contractible.  Thus, the sets $U_\alpha$ form a good cover of $U(n, w)$.

Given a point $p \in U(n,w)$, we first describe an algorithm for finding $\mathcal A_p$, the set of symbols $\alpha \in \config(n,w)$ such that $p \in U_{\alpha}$. Along the way, we will see that $\mathcal A_p$ is a chain.

We first define the poset of ordered partitions $\partition(n)$.  An element of $\partition(n)$ is an ordered sequence $(S_1,S_2, \ldots)$ of non-empty subsets of $[n]$ such that the subsets $S_j$ are pairwise disjoint, and their union is all of $[n]$.  

The partial order on $\partition(n)$ is characterized as follows: the covers of an ordered partition $\pi$ are the ordered partitions obtained from $\pi$ by the operation of replacing two adjacent subsets by their union at the same place in the order. We remark that $\partition(n)$ is somewhat similar to $\poset(n)$, but in $\partition(n)$ we forget the order of the elements within a block. 

Now let a point $$p = (x_1, y_1, x_2, y_2, \dots, x_n, y_n) \in U(n,w)$$
be given.  We find the set of $U_\alpha$ containing $p$ in the following way.\\

\noindent {\bf Step 1} produces a chain  $\pi_1, \pi_2, \dots, \pi_m$ in the poset $\partition(n)$.  This step uses the $x$ coordinates of $p$ but not the $y$ coordinates.  We say that $x_k$ and $x_\ell$ are \emph{consecutive $x$ values of $p$} if $x_k<x_\ell$ and there is no $k'$ for which $x_k < x_{k'} < x_\ell$.  
%

For $i = 1, 2, \dots, n$ and $\rho \ge 0$, define $K_i(\rho):= [x_i-\rho /2,x_ i+ \rho /2]$, i.e.,\ the closed interval of length $\rho$ centered on $x_i$. Let $K(\rho) := \bigcup K_i(\rho)$ be the union of all the intervals $K_i(\rho)$.  Every path component of $K(\rho)$ is a union of finitely many closed intervals and is connected, hence is a closed interval itself. We cluster the integers $[n]$ according to which path connected component of $K$ they lie in. In other words, for $i \in [n]$ we say that $i \in S_k(\rho)$ if $x_i$ is in the $k$th connected component of $K(\rho)$, counting left to right.

When $\rho = 0$, $k$ and $\ell$ lie in the same cluster if and only if $x_k=x_{\ell}$.  When $\rho$ is sufficiently large, there is only one cluster and $S_1(\rho) = [n]$.  In general, the ordered partition $\pi(\rho)$ changes only at certain values of $\rho$, namely the differences of consecutive $x$ values.  So as $\rho$ increases, we get a finite sequence of distinct ordered partitions $\pi_1, \pi_2, \ldots$ This sequence is the desired chain in $\partition(n)$. \\

\noindent {\bf Step 2} produces a chain $\tilde{\pi}_1, \tilde{\pi}_2, \dots, \tilde{\pi}_{m'}$ in $\poset(n,w)$. Here $m' \le m$ and for every $1 \le i \le m'$, the symbol $\tilde{\pi}_i \in \poset(n,w)$ is a ``lift'' of the ordered partition $\pi_i \in \partition(n)$. This step uses the $y$ coordinates of $p$ but not the $x$ coordinates.

Given a partition $\pi_i=(S_1, S_2, \dots)$ produced in step one, order the elements within each subset $S_k$ to produce $\tilde{\pi}_i \in \poset(n, w)$, in such a way that if $\sigma_\ell$ and $\sigma_{\ell '}$ are elements of $S_k$ with $\sigma_{\ell}$ before $\sigma_{\ell'}$ in the ordering (that is, $\ell < \ell'$), then $y_{\sigma_{\ell}} > y_{\sigma_{\ell'}} + 1$.  If for some $S_k$ this can not be done, then discard $\pi_i$ from the chain and exclude it from further consideration. We can also discard, then, $\pi_k$ for any $k > i$; any such ordered partition is just made by merging elements of $\pi_i$, so it is still impossible to order within a part by $y$ coordinate.

%

It is immediate that the chain constructed from steps one and two is the desired $\mathcal{A}_p$, from the definition of $U_{\alpha}$.\\



Finally, we check that an intersection 
$$ U_{\alpha_1} \cap U_{\alpha_2} \cap \dots \cap U_{ \alpha_k} $$
is nonempty if and only if the symbols
$$ \{ \alpha_1, \alpha_2, \dots, \alpha_k \}$$ form a chain in $\poset(n, w)$. 

First of all, if the intersection is nonempty then we apply the algorithm to any point $p$ in the intersection.  The set of $U_{\alpha}$ containing $p$ gives a chain that includes all of $\alpha_1, \ldots, \alpha_k$, and any sub-poset of a chain is a chain, so $\alpha_1, \ldots, \alpha_k$ must form a chain.





Now suppose we have a chain $\alpha_1 < \alpha_2 < \dots < \alpha_m$ in $\poset(n, w)$.  We produce a point $p$ such that $p \in U_{\alpha_i}$ for every $i$.  First we choose the $y$-coordinates.  None of the blocks in any symbol have width more than $w$, so there is enough room vertically in $U(n, w)$ to ensure that if $\sigma_k$ and $\sigma_\ell$ are in the same block of $\alpha_m$ with $k < \ell$, then we can choose $y_{\sigma_k} > y_{\sigma_\ell} + 1$.

To choose the $x$-coordinates, we start by assuming without loss of generality that the chain is maximal, so getting to each symbol $\alpha_i$ from the previous symbol $\alpha_{i-1}$ corresponds to merging two consecutive blocks.  We start with $\alpha_1$ and add restrictions on the $x$-coordinates one step at a time, so that on the $i$th step we will have fixed the differences between $x$-coordinates within each block of $\alpha_i$, but we think of the separate blocks sliding freely from side to side.  After all the steps, we will have specified the configuration up to horizontal translation.

More precisely, at step $1$ we require that if $k < \ell$---that is, if $\sigma_k$ appears before $\sigma_\ell$ in $\alpha_1$---then $x_{\sigma_k} < x_{\sigma_\ell}$, with no other restrictions.  Any such configuration is in $U_{\alpha_1}$.  At step $2$, two consecutive elements in $\alpha_1$ together form a block of width $2$, and we introduce the restriction that their $x$-coordinates have difference $9=3^2$.  Then, continuing in the same way, at step $i$ two consecutive blocks $c_{k_i}$ and $c_{k_{i+1}}$ in $\alpha_{i-1}$ merge to give $\alpha_i$.  We introduce the restriction that the difference in $x$-coordinates between the first element of $c_{k_i}$ and the last element of $c_{k_{i+1}}$---where ``first'' and ``last'' are still taken in terms of the first symbol $\alpha_1$---should be $3^i$.  

Any configuration that satisfies the restrictions up through step $i$ and also leaves horizontal gaps larger than $3^i$ between the blocks of $\alpha_i$ is in $U_{\alpha_i}$.  Note that step $i$ sets the gap between blocks $c_{k_i}$ and $c_{k_{i+1}}$ of $\alpha_{i-1}$ to be more than $3^{i-1}$, which is what we need in order for the final configuration to be in $U_{\alpha_{i-1}}$.  This is because, if we use the word ``width'' here to mean the range of $x$-coordinates, the widths of $c_{k_i}$ and $c_{k_{i+1}}$ have been set to be distinct powers of $3$ less than $3^i$, or to be $0$ if the block has only one element.  Thus, the gap has size at most $3^i - (3^{i-1} + 3^{i-2}) > 3^{i-1}$.

In the final step, step $n$ means merging two blocks to get $\alpha_{n}$ which has only one block, and at step $n$ we set the difference $x_{\sigma_n} - x_{\sigma_1}$ to be $3^n$ (here, the numbering $\sigma_1, \ldots, \sigma_n$ is still taken in terms of the first symbol $\alpha_1$).  At this stage we have specified the configuration $p$ up to translation, and it is in $U_{\alpha_1} \cap \dots \cap U_{\alpha_n}$.

\medskip

The barycentric subdivision of $\cell(n, w)$ has one vertex for every cell in $\cell(n, w)$, and one simplex for every chain of incident cells in $\cell(n, w)$.  The nerve $N(n, w)$ has one vertex for each $U_\alpha$, and thus for each cell in $\cell(n, w)$.  And, we have just shown that every set of $U_\alpha$ with nonempty intersection---corresponding to a simplex in $N(n, w)$---corresponds to a chain of incident cells in $\cell(n, w)$, and vice versa.  Thus, $N(n, w)$ is equal to the barycentric subdivision of $\cell(n, w)$.
\end{proof}

\subsection{Consequences of the homotopy equivalence} \label{sec:consequences}

One immediate consequence of the homotopy equivalence is Part (1) of Theorem \ref{thm:main}, i.e.,\ given a sufficiently wide strip we have an isomorphism on homology with the configuration space of points in the plane.


\begin{proof}[Proof of Part (1) of Theorem \ref{thm:main}] We show that if $w \ge j+2$, then we have an isomorphism between $H_j[\config(n, w)]$ and $H_j[\config(n, \R^2)]$. Moreover, this isomorphism is induced by the inclusion map $i: \config(n,w) \to \config(n, \R^2)$.

Every cell in $\cell(n)$ but not in $\cell(n,w)$ is indexed by a symbol with at least one block of width at least $w+1$. Hence every such cell has dimension at least $w$. 
Therefore, the subcomplex $\cell(n, w)$ of $\cell(n)$ has the same $(w-1)$--skeleton as $\cell(n)$, so the inclusion $i \co \cell(n, w) \hookrightarrow \cell(n)$ induces an isomorphism of homology in degrees $\leq w-2$.

By Theorem~\ref{thm:homoeq}, the homotopy equivalences 
\[\cell(n, w) \rightarrow U(n, w) \rightarrow \config(n, w)\]
and
\[\cell(n) = \cell(n, n) \rightarrow U(n, n) = \config(n, \mathbb{R}^2)\]
commute up to homotopy with the inclusions $\cell(n, w) \hookrightarrow \cell(n, n)$, $U(n, w) \hookrightarrow U(n, n)$, and $\config(n, w) \hookrightarrow \config(n, \mathbb{R}^2)$, and so the inclusion $i \co \config(n, w) \hookrightarrow \config(n, \mathbb{R}^2)$ also induces an isomorphism on homology in degrees $\leq w-2$.







\end{proof}

Another consequence of the homotopy equivalence is that $\cell(n,2)$ is an Eilenberg--MacLane space.  This cell complex is a cube complex because for each dimension $j$, each $j$--cell is labeled by a symbol that has $j$ blocks of size $2$ and all other blocks of size $1$.  We can take this cell to be a $j$--dimensional cube.

\begin{theorem} \label{thm:curvature}
The cubical complex $\cell(n,2)$ admits a locally-CAT(0) metric. As a corollary, $\config(n,2)$ is aspherical, i.e.,\ has a contractible universal cover. So $\pi_j (\config(n,2)) = 0$ for $j \ge 2$.
\end{theorem}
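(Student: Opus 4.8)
The plan is to verify the Gromov link condition for the cube complex $\cell(n,2)$, which for a cube complex is the statement that the link of every vertex is a flag simplicial complex. Since $\cell(n,2)$ is a finite-dimensional, non-positively curved (in the combinatorial sense) cube complex once the link condition holds, the standard theorem of Gromov (see Bridson--Haefliger) gives that it admits a locally-CAT(0) metric, hence its universal cover is CAT(0) and in particular contractible; the corollary about asphericity of $\config(n,2)\simeq\cell(n,2)$ and the vanishing of $\pi_j$ for $j\ge 2$ then follows immediately.

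First I would identify the vertices of $\cell(n,2)$: these are the $0$-cells, indexed by symbols with all blocks of size $1$, i.e.\ by permutations $(\sigma_1\mid\sigma_2\mid\cdots\mid\sigma_n)$, of which there are $n!$. Fix such a vertex $v=(\sigma_1\mid\cdots\mid\sigma_n)$. The cubes containing $v$ correspond to symbols obtained by merging some collection of disjoint consecutive pairs $\sigma_i,\sigma_{i+1}$ into size-$2$ blocks (in one of the two shuffle orders); a $k$-cube at $v$ corresponds to choosing $k$ such pairwise-disjoint adjacent transpositions of positions, together with, for each, a choice of internal order of the resulting size-$2$ block. So the link of $v$ has vertices indexed by pairs (a position $i\in\{1,\dots,n-1\}$, a choice of order for the block formed from positions $i,i+1$), and a collection of these link-vertices spans a simplex exactly when the chosen positions are pairwise non-adjacent (the transpositions commute) — with the caveat that two link-vertices using the \emph{same} position $i$ but opposite internal orders are never joined, since they come from distinct $1$-cells incident to $v$ but not from a common $2$-cell. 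I would make this combinatorial description precise by unwinding the cover relations in $\poset(n,2)$ above $v$.

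Then I would check flagness directly: suppose link-vertices $a_1,\dots,a_m$ are pairwise joined by edges in the link. Pairwise compatibility means the underlying positions $i_1,\dots,i_m$ are pairwise distinct and pairwise non-adjacent, and moreover no two of the $a_j$ share a position (they couldn't be joined), so the positions are genuinely distinct and the non-adjacency is a pairwise condition that is automatically a global condition — a set of integers in $\{1,\dots,n-1\}$ is pairwise non-adjacent iff every pair is non-adjacent. Hence the $m$ commuting adjacent transpositions, with their chosen internal orders, assemble into a single symbol with $m$ size-$2$ blocks, giving an $m$-cube at $v$ whose faces are exactly $a_1,\dots,a_m$; so they span a simplex. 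That is precisely the flag condition. I would also note the dimension bound: a pairwise non-adjacent subset of $\{1,\dots,n-1\}$ has size at most $\lceil (n-1)/2\rceil$, consistent with $\cell(n,2)$ being $\lfloor n/2\rfloor$-dimensional.

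The main obstacle — really the only subtlety — is getting the link description exactly right, in particular correctly handling the two internal orders of a size-$2$ block: one must confirm that the two $1$-cells at $v$ coming from merging the same adjacent pair in the two shuffle orders are \emph{not} faces of any common higher cube, so that the corresponding two link-vertices are non-adjacent and the link does not accidentally fail to be simplicial (no double edges) — and then confirm this does not obstruct flagness, which it does not because flagness only constrains families that \emph{are} pairwise joined. Once the link is correctly shown to be the flag complex on the ``pairwise-non-adjacent-positions'' relation, applying Gromov's criterion and then the CAT(0) contractibility theorem is routine, and I would cite Bridson--Haefliger for both.
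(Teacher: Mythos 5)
Your proposal is correct and follows essentially the same route as the paper: both verify Gromov's link condition by identifying the link vertices at a permutation‑vertex with ordered merges of adjacent singleton blocks, observing that link edges correspond to disjoint (equivalently, non‑adjacent‑position) pairs, and concluding flagness because any pairwise‑disjoint family of such merges concatenates into a single symbol with that many width‑$2$ blocks, i.e.\ a cube filling the clique. Your extra care about the two opposite internal orders at the same position never spanning an edge is a point the paper handles only implicitly, but it does not change the argument.
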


\begin{proof}
This follows immediately from Gromov's criterion for a cube complex to admit locally-CAT(0) metric \cite{Gromov87}. The only thing to check is that the link of every vertex in $\cell(n,2)$ is a ``flag'' simplicial complex. A simplicial complex is said to be flag if it is the clique complex of its underlying graph---i.e.,\ if it is maximal with respect to its $1$-skeleton. A precise statement and complete proof of Gromov's criterion can be found in Appendix I.6 of Davis's book \cite{Davis15}.

Checking that the link of a vertex in $\cell(n,2)$ is flag is straightforward. Let $v$ be a vertex in $\cell(n,2)$, and consider the link of $v$, $L=\mbox{lk}(v)$.  The vertex $v$ corresponds to a symbol in $\poset(n,2)$ where every block has width $1$.

The vertices of $L$ correspond to elements $\sigma \ge v$ in $\poset(n,2)$ where every block in $\sigma$ has width $1$ except one block of width $2$. Similarly, edges in $L$ correspond to symbols $\tau \ge v$ where every block in $\tau$ has width $1$ except two blocks of width $2$. These two blocks of width $2$ are disjoint pairs in $[n]$. 

Suppose $v_1, v_2, \dots, v_k$ span a $k$-clique in $\mbox{lk}(v)$. Then every pair of vertices corresponds to a disjoint pair of elements in $[n]$, and then concatenating these $k$ disjoint pairs (and respecting the order within each pair) gives a symbol $C \in \symbols(n,2)$ with $k$ blocks of width $2$ and all the remaining blocks of width $1$. This symbol $C$ indexes a $k$-dimensional cube in $\cell(n,2)$, which corresponds to a $(k-1)$-dimensional simplex in $\mbox{lk}(v)$.

For example, let $v = (4 \mid 6 \mid 5 \mid 1 \mid 3 \mid 2)$. The vertices $(4 6 \mid 5 \mid 1 \mid 3 \mid 2)$, $(4 \mid 6 \mid 1 5 \mid 3 \mid 2)$, and $(4 \mid 6 \mid 5 \mid 1 \mid 2 3)$ span a clique in $\mbox{link}(v)$. Since the symbol $(4 6 \mid 1 5 \mid 2 3 )$ corresponds to a $3$-dimensional cube in $\cell(6,2)$, the clique in $\mbox{link}(v)$ is filled in by a $2$-dimensional face.  
\end{proof}

On the other hand, $\config(n,w)$ is not an Eilenberg--MacLane space if $3 \le w \le n-1$. Indeed, the $2$-skeleton of $\cell(n,w)$ is the same as the $2$-skeleton of $\cell(n)$ when $w$ is in this intermediate range. So if $\config(n,w)$ were a $K(\pi,1)$, its homology would have to agree with the configuration space of points but we will see in Section \ref{sec:regimes} that it does not.

\section{Asymptotic lower bounds} \label{sec:lower} 

In this section, we exhibit a large number of linearly independent cycles to prove lower bounds on Betti numbers. The following is well known.

\begin{lemma} \label{lem:lower} Suppose that $M$ is an open $d$-dimensional manifold, with submanifolds $Z_1, Z_2, \dots, Z_k$ and $Z_1^*, Z_2^*, \dots, Z_k^*$ satisfying the following.
\begin{enumerate}
\item Every $Z_i$ is a compact orientable $j$-dimensional submanifold without boundary,
\item every $Z^*_i$ is a closed orientable $(d-j)$-dimensional submanifold without boundary,
\item whenever $a \neq b$ we have that $Z_a \cap Z^*_b = \emptyset$, and
\item $Z_a$ intersects $Z_a^*$ transversely in a point for every $a$.
\end{enumerate}
Then for any choice of coefficient field for the homology, we have $\dim H_j ( M) \ge k$.
\end{lemma}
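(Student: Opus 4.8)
The plan is to use Poincar\'e--Lefschetz duality to convert the transverse-intersection data into a statement about linear independence in homology. First I would pass to homology with closed supports (Borel--Moore homology) $H_*^{\mathrm{BM}}(M)$, for which each closed submanifold $Z_i^*$ of dimension $d-j$ determines a fundamental class $[Z_i^*] \in H_{d-j}^{\mathrm{BM}}(M)$, using the chosen field coefficients so that orientability suffices (in fact mod $2$ it would work without orientations, but we have assumed orientability). Dually, each compact $Z_i$ without boundary determines an ordinary class $[Z_i] \in H_j(M)$. Poincar\'e duality for the open oriented $d$-manifold $M$ gives a perfect pairing
\[
H_j(M) \otimes H_{d-j}^{\mathrm{BM}}(M) \longrightarrow H_0^{\mathrm{BM}}(M) \cong k,
\]
which on fundamental classes of submanifolds meeting transversely computes the signed count of intersection points (the geometric intersection number). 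This is the standard fact I would cite rather than reprove.

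The key computation is then that, under this pairing, $\langle [Z_a], [Z_b^*]\rangle$ equals $0$ when $a \neq b$ by hypothesis (3), since the submanifolds are disjoint, and equals $\pm 1$ when $a = b$ by hypothesis (4), since they meet transversely in a single point. So the $k \times k$ matrix of pairings $(\langle [Z_a],[Z_b^*]\rangle)_{a,b}$ is diagonal with nonzero (unit) diagonal entries, hence invertible over the field $k$. A matrix of pairings between two finite sets of vectors that is invertible forces both sets to be linearly independent: if $\sum_a \lambda_a [Z_a] = 0$ in $H_j(M)$, then pairing with $[Z_b^*]$ gives $\lambda_b \cdot (\pm 1) = 0$, so all $\lambda_b = 0$. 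Therefore the classes $[Z_1], \dots, [Z_k]$ are linearly independent in $H_j(M)$, giving $\dim H_j(M) \ge k$.

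I expect the main subtlety to be purely bookkeeping about which duality to invoke: $M$ is open, so one must use the version of Poincar\'e duality pairing ordinary homology against Borel--Moore homology (equivalently compactly-supported cohomology against ordinary cohomology), and one must check that a \emph{compact} submanifold without boundary carries a fundamental class in ordinary homology while a merely \emph{closed} (but possibly noncompact) submanifold carries one in Borel--Moore homology -- this is exactly why hypotheses (1) and (2) are phrased asymmetrically. Once that framework is set up, the transversality and disjointness hypotheses make the intersection pairing matrix manifestly the identity up to signs, and the conclusion is immediate. Since the lemma is asserted to be well known, I would keep the argument to a few lines, citing a standard reference for the intersection-theoretic interpretation of the duality pairing and for the fact that $\tau$-type directional derivative issues do not arise here (they don't, as everything is smooth).
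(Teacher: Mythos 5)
Your proposal is correct and follows essentially the same route as the paper's proof: both place $[Z_i]$ in ordinary homology and $[Z_i^*]$ in Borel--Moore homology, use the intersection pairing on the open manifold $M$, and deduce linear independence from the fact that the pairing matrix is diagonal with $\pm 1$ entries. The paper's argument is exactly this, stated slightly more tersely.
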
 

\begin{proof}
Choose orientations of each $Z_i$ and let $[Z_i]$ in $H_j(M)$ be the fundamental class of $Z_i$.  Choose orientations of each $Z^*_i$ and let $[Z^*_i]$ in $H^{BM}_{d-i}(M)$ be the fundamental class of $Z^*_i$.  (Here $H^{BM}_{*}$ denotes homology with closed supports, or Borel--Moore homology.)

Choose an orientation of $M$ so that the intersection pairing
$$p: H_i(M) \times H^{BM}_{d-i}(M) \to \R$$ is defined. By the stated properties of the manifolds $Z_i$ and $Z^*_i$, this intersection pairing satisfies
\begin{itemize}
\item $p([Z_a] ,  [Z^*_b]) = 0$  for $a \neq b$,
\item $p([Z_a] ,  [Z^*_a]) = \pm 1.$
\end{itemize}
Therefore, the homology classes $[Z_1], [Z_2], \dots, [Z_k]$ are linearly independent in $H_i(M)$, so the dimension of $H_i(M)$ is at least $k$.\\
\end{proof}

In what follows, rather than working with the space $\config(n, w)$ directly, it is most convenient to apply Lemma~\ref{lem:lower} to the space which we denote by $wU(n, w) - (0, \frac{w}{2})$, consisting of configurations of points in the strip $\mathbb{R} \times [-\frac{w}{2}, \frac{w}{2}]$ in which no $w+1$ points have the same $x$--coordinate.  Because $wU(n, w) - (0, \frac{w}{2})$ is an open subset of $\mathbb{R}^{2n}$, it is an open manifold and thus is appropriate for Lemma~\ref{lem:lower}.  It is homeomorphic to $U(n, w)$, which we have shown in Section~\ref{sec:homotopy} is homotopy equivalent to $\config(n, w)$.  Lemma~\ref{lem:lower} gives a lower bound on the Betti numbers of $wU(n, w) - (0, \frac{w}{2})$, giving the same lower bound on the Betti numbers of $\config(n, w)$.

\bigskip

\begin{definition}
Let $j = q(w-1) + r$ with $0 \le r < w-1$. A \emph{special symbol} $\alpha \in \symbols(n,w)$ is a symbol $(c_1 \mid c_2 \mid \dots \mid c_{m})$ such that

\begin{enumerate}
\item $\alpha$ has $q$ blocks of width $w$, $r$ blocks of width $2$, and all other blocks of width $1$,
\item in every block, the largest element appears first, and 
\item if $c_i$ and $c_{i+1}$ are consecutive blocks of width strictly less than $w$, then the first element of block $c_i$ is greater than the first element of block $c_{i+1}$.\\
\end{enumerate}
\end{definition}

Which symbols are special depends on $n$, $j$, $w$, but for the sake of simplicity we suppress these in the notation since these parameters are always implicit.

\begin{definition} \label{def:Z^a} For every special symbol $\alpha$, we define a closed submanifold $Z^*_{\alpha}$ in $wU(n,w) - (0, \frac{w}{2})$ as follows. 
\begin{enumerate}
\item If $\sigma_k$ and $\sigma_\ell$ are in the same block and $k < \ell$, then $x_{\sigma_k} = x_{\sigma_\ell}$ and $y_{\sigma_k} > y_{\sigma_\ell}$.
\item If $\sigma_k$ and $\sigma_\ell$ are in different blocks and $k < \ell$, and either $\sigma_k$ or $\sigma_\ell$ is in a block of width $w$, then $x_{\sigma_k} < x_{\sigma_\ell}$.
\end{enumerate}
\end{definition}

\begin{figure}
\begin{tikzpicture}[scale=0.5]
\node[blue] at (0.2,2.5)  {\bf 5};
\node[blue]  at (1,1)  {\bf 4};
\node[blue]  at (1,3) {\bf 13};
\node[blue]  at (2,2.8) {\bf 19};
\node[blue]  at (5,0) {\bf 7};
\node[blue]  at (5,1) {\bf 1};
\node[blue]  at (5,2) {\bf 11};
\node[blue]  at (5,3) {\bf 6};
\node[blue]  at (5,4) {\bf 23};
\node[blue]  at (16,2.5) {\bf 17};
\node[blue]  at (10.5,2.75) {\bf 14};
\node[blue]  at (11.5,1.5) {\bf 8};
\node[blue]  at (11.5,3) {\bf 9};
\node[blue]  at (13,2) {\bf 3};
\node[blue]  at (13,3) {\bf 10};
\node[blue]  at (18,-0.04) {\bf 2};
\node[blue]  at (18,0.87) {\bf 12};
\node[blue]  at (18,1.825) {\bf 15};
\node[blue]  at (18,2.78) {\bf 18};
\node[blue]  at (18,3.69) {\bf 21};
\node[blue]  at (21,1) {\bf 16};
\node[blue]  at (21,3) {\bf 22};
\node[blue]  at (19.75,0.4) {\bf 20};
\draw (-1,-0.5)--(23,-0.5);
\draw (-1,4.5)--(23,4.5);

\draw[blue] (0.2,2.05) -- (0.2,2.95);
\draw[blue] (1,0.55) -- (1,1.45);
\draw[blue] (1,2.55) -- (1,3.45);
\draw[blue] (2,2.35) -- (2,3.25);
\draw[blue] (5,-0.48) -- (5,0.42);
\draw[blue] (5,0.43) -- (5,1.33);
\draw[blue] (5,1.55) -- (5,2.45);
\draw[blue] (5,2.69) -- (5,3.58);
\draw[blue] (5,3.59) -- (5,4.49);
\draw[blue] (16,2.05) -- (16,2.95);
\draw[blue] (10.5,2.3)--(10.5,3.2);
\draw[blue] (11.5,1.05)--(11.5,1.95);
\draw[blue] (11.5,2.55)--(11.5,3.45);
\draw[blue] (13,1.55)--(13,2.45);
\draw[blue] (13,2.55)--(13,3.45);
\draw[blue] (18,-0.49)--(18,0.41);
\draw[blue] (18,0.42)--(18,1.32);
\draw[blue] (18,1.33)--(18,2.32);
\draw[blue] (18,2.33)--(18,3.23);
\draw[blue] (18,3.24)--(18,4.14);
\draw[blue] (21,0.55)--(21,1.45);
\draw[blue] (21,2.55)--(21,3.45);
\draw[blue] (19.75,-0.05)--(19.75,0.85);

\end{tikzpicture}

\vspace{5pt}

\begin{tikzpicture}[scale=0.5]

\draw[blue, thick] (0,2) circle (0.5);
\draw[blue, thick](1.913,2.282) circle (0.5);
\draw[blue, thick] (1.087,1.718) circle (0.5);
\draw(1.5,2) circle (1);
\draw[blue, thick] (3,2) circle (0.5);
\draw[blue, thick] (7.933,2.514) circle (0.5);
\draw(6,2) circle (2.5);
\coordinate (A) at (5.517,1.871);
\draw(A) circle (2);
\draw(A) ++ (235:1.5) coordinate (B);
\draw[blue,thick](B) circle (0.5);
\node[blue] at (B) {\bf 1};
\draw(A) ++ (235:-0.5) coordinate (B2);
\draw(B2) circle (1.5);
\draw(B2) ++ (23:1) coordinate (C);
\draw[blue,thick](C) circle (0.5);
\node[blue] at (C) {\bf 11};
\draw(B2) ++ (23:-0.5) coordinate (C2);
\draw(C2) circle (1);
\draw(C2) ++ (160.5:0.5) coordinate(D1);
\draw(C2) ++ (160.5:-0.5) coordinate(D2);
\draw[blue,thick](D1) circle (0.5);
\node[blue] at (D1) {\bf 6};
\node[blue] at (D2) {\bf 23};
\draw[blue,thick](D2) circle (0.5);
\draw[blue, thick] (9,2) circle (0.5);
\draw[blue, thick] (10,2) circle (0.5);
\draw[blue, thick] (11.01,2.029) circle (0.5);
\draw[blue, thick] (11.999,1.971) circle (0.5);
\draw(11.5,2) circle (1);
\draw[blue, thick] (13.416,2.493) circle (0.5);
\draw[blue, thick] (13.584,1.507) circle (0.5);
\draw(13.5,2) circle (1);
\draw(17,2) circle (2.5) coordinate(A);
\draw(A)++(255:2) coordinate(A2);
\draw[blue, thick] (A2) circle (0.5);
\draw(A) ++(255:-0.5) coordinate(B);
\draw (B) circle(2);
\draw (B) ++(108:1.5) coordinate(C1);
\draw (B) ++(108:-0.5) coordinate(C2);
\draw[blue,thick] (C1) circle (0.5);
\draw (C2) circle (1.5);
\draw (C2) ++(12.5:1) coordinate(D1);
\draw (C2) ++(12.5:-0.5) coordinate(D2);
\draw[blue,thick] (D1) circle (0.5);
\draw (D2) circle (1);
\draw (D2) ++(306:0.5) coordinate(E1);
\draw (D2) ++(306:-0.5) coordinate(E2);
\draw[blue,thick] (E1) circle (0.5);
\draw[blue,thick] (E2) circle (0.5);
\node[blue] at (A2) {\bf 2};
\node[blue] at (C1) {\bf 12};
\node[blue] at (D1) {\bf 15};
\node[blue] at (E1) {\bf 18};
\node[blue] at (E2) {\bf 21};
\draw[blue, thick] (20.881,2.316) circle (0.5);
\draw[blue, thick] (20.113,1.684) circle (0.5);
\draw(20.5,2) circle (1);
\draw[blue, thick] (22,2) circle (0.5);

\node[blue] at (0,2)  {\bf 19};
\node[blue] at (1.086950852752118,1.718236975531214)   {\bf 4};
\node[blue] at (1.913049147247882,2.281763024468786) {\bf 13};
\node[blue] at (3,2) {\bf 5};
\node[blue] at (7.933,2.514) {\bf 7};
\node[blue] at (9,2) {\bf 17};
\node[blue] at (10,2) {\bf 14};
\node[blue]  at (11.01,2.029) {\bf 3};
\node[blue]  at (11.999,1.971)  {\bf 10};
\node[blue] at (13.416,2.493) {\bf 8};
\node[blue] at (13.584,1.507) {\bf 9};
\node[blue] at (20.113,1.684) {\bf 16};
\node[blue] at (20.887,2.316) {\bf 22};
\node[blue] at (22,2) {\bf 20};
\draw (-1,-0.5)--(23,-0.5);
\draw (-1,4.5)--(23,4.5);

\end{tikzpicture}

\vspace{5pt}

\begin{tikzpicture}[scale=0.5]
\draw[blue] (0,1.55) -- (0,2.45);
\draw[blue](1.5,1.05)--(1.5,1.95);
\draw[blue](1.5,2.05)--(1.5,2.95);
\draw[blue] (3,1.55) -- (3,2.45);
\draw[blue] (6,-0.45) -- (6,0.45);
\draw[blue] (6,0.55) -- (6,1.45);
\draw[blue] (6,1.55) -- (6,2.45);
\draw[blue] (6,2.55) -- (6,3.45);
\draw[blue] (6,3.55) -- (6,4.45);
\draw[blue] (9,1.55) -- (9,2.45);
\draw[blue] (10,1.55)--(10,2.45);
\draw[blue] (11.5,1.05)--(11.5,1.95);
\draw[blue] (11.5,2.05)--(11.5,2.95);
\draw[blue] (13.5,1.05)--(13.5,1.95);
\draw[blue] (13.5,2.05)--(13.5,2.95);
\draw[blue] (17,-0.45)--(17,0.45);
\draw[blue] (17,0.55)--(17,1.45);
\draw[blue] (17,1.55)--(17,2.45);
\draw[blue] (17,2.55)--(17,3.45);
\draw[blue] (17,3.55)--(17,4.45);
\draw[blue] (20.5,1.05)--(20.5,1.95);
\draw[blue] (20.5,2.05)--(20.5,2.95);
\draw[blue] (22,1.55)--(22,2.45);

\draw[blue, thick] (0,2) circle (0.5);
\draw[blue, thick](1.5,1.5) circle (0.5);
\draw[blue, thick] (1.5,2.5) circle (0.5);
\draw(1.5,2) circle (1);
\draw[blue, thick] (3,2) circle (0.5);
\draw[blue, thick] (6,0) circle (0.5);
\draw[blue, thick] (6,1) circle (0.5);
\draw[blue, thick] (6,2) circle (0.5);
\draw[blue, thick] (6,3) circle (0.5);
\draw[blue, thick] (6,4) circle (0.5);
\draw(6,3.5) circle (1);
\draw(6,3) circle (1.5);
\draw(6,2.5) circle (2);
\draw(6,2) circle (2.5);
\draw[blue, thick] (9,2) circle (0.5);
\draw[blue, thick] (10,2) circle (0.5);
\draw[blue, thick] (11.5,1.5) circle (0.5);
\draw[blue, thick] (11.5,2.5) circle (0.5);
\draw(11.5,2) circle (1);
\draw[blue, thick] (13.5,1.5) circle (0.5);
\draw[blue, thick] (13.5,2.5) circle (0.5);
\draw(13.5,2) circle (1);
\draw[blue, thick] (17,0) circle (0.5);
\draw[blue, thick] (17,1) circle (0.5);
\draw[blue, thick] (17,2) circle (0.5);
\draw[blue, thick] (17,3) circle (0.5);
\draw[blue, thick] (17,4) circle (0.5);
\draw(17,3.5) circle (1);
\draw(17,3) circle (1.5);
\draw(17,2.5) circle (2);
\draw(17,2) circle (2.5);
\draw[blue, thick] (20.5,1.5) circle (0.5);
\draw[blue, thick] (20.5,2.5) circle (0.5);
\draw(20.5,2) circle (1);
\draw[blue, thick] (22,2) circle (0.5);

\node[blue] at (0,2)  {\bf 19};
\node[blue] at (1.5,1.5)   {\bf 4};
\node[blue] at (1.5,2.5) {\bf 13};
\node[blue] at (3,2) {\bf 5};
\node[blue] at (6,0) {\bf 7};
\node[blue] at (6,1) {\bf 1};
\node[blue] at (6,2) {\bf 11};
\node[blue] at (6,3) {\bf 6};
\node[blue] at (6,4) {\bf 23};
\node[blue] at (9,2) {\bf 17};
\node[blue] at (10,2) {\bf 14};
\node[blue] at (11.5,1.5) {\bf 3};
\node[blue] at (11.5,2.5) {\bf 10};
\node[blue] at (13.5,1.5) {\bf 8};
\node[blue] at (13.5,2.5) {\bf 9};
\node[blue] at (17,0) {\bf 2};
\node[blue] at (17,1) {\bf 12};
\node[blue] at (17,2) {\bf 15};
\node[blue] at (17,3) {\bf 18};
\node[blue] at (17,4) {\bf 21};
\node[blue] at (20.5,1.5) {\bf 16};
\node[blue] at (20.5,2.5) {\bf 22};
\node[blue] at (22,2) {\bf 20};
\draw (-1,-0.5)--(23,-0.5);
\draw (-1,4.5)--(23,4.5);

\end{tikzpicture}
\caption{Upper picture: a point on the closed $34$-dimensional submanifold $Z^*_{\alpha}$.  Middle picture: a point on the compact $12$-dimensional submanifold $Z_\alpha$. Bottom picture: the single point of transverse intersection $Z^*_{\alpha} \cap Z_\alpha$. In all three pictures, $\alpha \in \symbols(23,5)$ is the special symbol 
$$\hspace{-0.75in} \alpha = (19 \mid 13 \  4 \mid 5 \mid 23  \  6 \ 11 \ 1 \ 7 \mid 17 \mid 14 \mid 10 \ 3 \mid 9 \  8 \mid 21 \ 18 \ 15 \ 12 \ 2 \mid 22 \ 16 \mid 20).$$} \label{fig:cycle}
\end{figure}
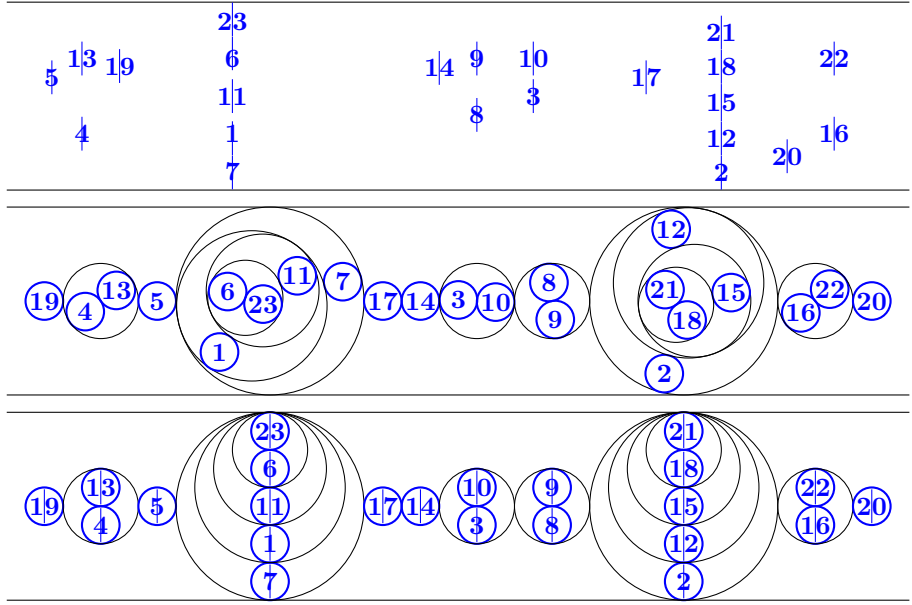

See, for example, the upper picture in Figure \ref{fig:cycle}. It is clear that every $Z^*_{\alpha}$ is closed in $wU(n,w) - (0, \frac{w}{2})$, by Definition \ref{def:unw}. Indeed, the strict inequalities $y_{\sigma_k} > y_{\sigma_\ell}$ and $x_{\sigma_k} < x_{\sigma_\ell}$ in Definition \ref{def:Z^a} could be replaced by weak inequalities $y_{\sigma_k} \ge y_{\sigma_\ell}$ and $x_{\sigma_k} \le x_{\sigma_\ell}$, since at most $w$ of the $x$-coordinates can be equal.\\


Now, for every special symbol $\alpha$ we describe a cycle with the desired intersection properties. 

\begin{theorem}
Given a special symbol $\alpha$, there exists a cycle $Z_{\alpha}$ represented by an $j$-dimensional torus embedded in $wU(n,w) - (0, \frac{w}{2})$, such that whenever $\alpha' \neq \alpha$ we have that $Z_\alpha \cap Z^*_{\alpha'} = \emptyset$, and such that for every $\alpha$, $Z_{\alpha}$ intersects $Z^*_{\alpha}$ transversely in a point.
\end{theorem}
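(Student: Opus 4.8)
The plan is to build $Z_\alpha$ as a product of circles, one circle for each block of $\alpha$ of width $\geq 2$, with the circle for a width-$w$ block being "nested" ($w-1$ dimensions worth of circles, from the $w-1$ degrees of freedom of a cluster of $w$ collinear-ish disks) and the circle for a width-$2$ block contributing a single dimension. Since $\alpha$ has $q$ blocks of width $w$ and $r$ blocks of width $2$, this gives $q(w-1)+r = j$ circles total, hence a $j$-torus. Concretely, for a width-$w$ block $c_i = (\sigma_{k_1} \ \sigma_{k_2} \ \cdots \ \sigma_{k_w})$ with the largest element $\sigma_{k_1}$ first, I would place the $w$ points of that block in a tight cluster and let them orbit around each other in the standard "braiding" fashion inside a small disk; iterating this as a chain of nested orbits (point $\sigma_{k_2}$ orbits $\sigma_{k_1}$, the pair orbits $\sigma_{k_3}$, etc.) realizes a $(w-1)$-torus, exactly the top cell of $\config(w,\R^2)$ restricted appropriately. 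The blocks of width $1$ are held at fixed, widely-separated $x$-positions in left-to-right order, and the clusters for the non-trivial blocks are inserted between them, each confined to a small neighborhood so the clusters never collide and never interfere with the linear order of blocks. Because every cluster stays within a tiny ball, the torus $Z_\alpha$ lies inside $wU(n,w) - (0,\tfrac{w}{2})$: no $w+1$ points ever share an $x$-coordinate since each cluster has at most $w$ points and distinct clusters sit at distinct $x$-ranges.

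Next I would verify the intersection properties. For $Z_\alpha \cap Z^*_\alpha$: the submanifold $Z^*_\alpha$ forces the points of each block to be exactly vertically stacked (equal $x$, decreasing $y$ in block order) and forces the $x$-order between any block of width $w$ and everything else. Intersecting with $Z_\alpha$, the only configuration on the torus meeting these vertical-stacking constraints is the single "base point" where every cluster is collapsed to a vertical line in the prescribed order --- the nested orbits each pass through the collinear configuration exactly once, and the block-order condition (3) in the definition of special symbol together with the $x$-coordinate constraints of $Z^*_\alpha$ pins down the relative left-right placement uniquely. Transversality at that point is a dimension count: $\dim Z_\alpha + \dim Z^*_\alpha = j + (2n - j) = 2n = \dim\bigl(wU(n,w)-(0,\tfrac w2)\bigr)$, and one checks that the tangent space to $Z_\alpha$ (spanned by the infinitesimal orbit directions) is complementary to the tangent space of $Z^*_\alpha$ (the linear subspace cut out by the equalities $x_{\sigma_k}=x_{\sigma_\ell}$ within blocks and appropriate coordinate identifications) --- the orbit directions move points horizontally apart within a block, which is precisely transverse to the "$x$-coordinates within a block are equal" conditions.

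For $Z_\alpha \cap Z^*_{\alpha'} = \emptyset$ when $\alpha' \neq \alpha$: I would argue by cases on how $\alpha'$ differs from $\alpha$. If $\alpha'$ has a block not contained in a block of $\alpha$ (the partition structures are incompatible), then no point of $Z_\alpha$ can satisfy the vertical-stacking requirement of $Z^*_{\alpha'}$, because on $Z_\alpha$ two points are vertically aligned (equal $x$) only if they lie in the same cluster, and clusters of $Z_\alpha$ never align across the block boundaries of $\alpha$. If $\alpha'$ has the same block partition but a different ordering within some block, then the $y$-ordering condition in $Z^*_{\alpha'}$ conflicts with the one realized on $Z_\alpha$'s base locus (the orbits only ever pass through the one prescribed collinear order). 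If $\alpha'$ has the same blocks and orderings but these don't match the special-symbol shape, it isn't of the relevant form; and if $\alpha'$ differs in the across-block $x$-order involving a width-$w$ block, condition (2) of $Z^*_{\alpha'}$ is violated on all of $Z_\alpha$ since the clusters stay in their fixed $x$-ranges.

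The main obstacle I expect is the transversality verification and, relatedly, pinning down that $Z_\alpha \cap Z^*_\alpha$ is \emph{exactly one} point rather than a finite set: one must choose the nested-orbit parametrization carefully so that each factor circle crosses the "collinear" locus exactly once and the crossings are clean, and then assemble these local computations (one per nontrivial block) into a global transversality statement using that the blocks occupy disjoint $x$-ranges and hence the relevant tangent and normal directions split as a direct sum over blocks. Getting the bookkeeping right --- especially reconciling the "largest element first" convention and the across-block ordering condition (3) with the strict inequalities defining $Z^*_\alpha$ --- will take the most care, but it is essentially a matter of organizing the local model for a single width-$w$ cluster and checking it fits together.
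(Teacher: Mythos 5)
Your proposal is correct and follows essentially the same route as the paper: the nested-orbit torus for each block placed in disjoint $x$-ranges, the observation that equal $x$-coordinates on $Z_\alpha$ occur only within a block so the vertical-stacking conditions of $Z^*_{\alpha'}$ force the partitions to agree and pin down a unique collapsed configuration, and a tangent-space/dimension-count sketch for transversality (which the paper also leaves at the level of a sketch). The only small point to tighten is ruling out the case where every block of $\alpha'$ is properly contained in a block of $\alpha$, which follows immediately because both special symbols have the same multiset of block sizes.
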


\begin{proof}

The cycle $Z_\alpha$ actually lies in the configuration space of disks of diameter $1$, which we can denote by $\config(n, w) - (0, \frac{w}{2})$ and which is a subspace of $wU(n, w) - (0, \frac{w}{2})$.  The general idea of the construction is illustrated in the middle picture in Figure~\ref{fig:cycle}. For every block $c_i$, we construct a $(w(c_i)-1)$-dimensional torus in the configuration space of only the disks appearing in that block.  Then we put the configurations for the different blocks horizontally next to each other in the strip in sequential order, making a $j$-parameter family of configurations that is an embedded $j$-dimensional torus.  

Looking more closely, for each individual block $c_i$, the corresponding $(w(c_i) - 1)$-dimensional torus is constructed roughly as follows.  We can spin the first two disks around each other inside a disk of diameter $2$.  Then we can spin the third disk around the first two disks, all inside a disk of diameter $3$.  Iterating this process, the final result is a disk of diameter $w(c_i)$, with the final disk of the block circling around its inner edge, and with the remaining disks moving around inside a disk of diameter $w(c_i) - 1$ tangent to the final disk.  In this way, $w(c_i)$ disks can move around inside a disk of diameter $w(c_i)$ to make a $(w(c_i)-1)$-dimensional torus in their configuration space.

More precisely, we construct the cycle $Z_\alpha$ as follows.  We parameterize the torus as
$$(S^{1})^{j} = \{  (\theta_1, \theta_2, \dots, \theta_{j}) \mid \theta_i \in [0,2 \pi), i =1, 2, \dots, j \}.$$
Given a symbol $\alpha$ and angles  $(\theta_1, \theta_2, \dots, \theta_{j})$, we need to specify a configuration in $\config(n, w) - (0, \frac{w}{2})$.


We compute the coordinates $x_1, y_1, \ldots, x_n, y_n$ as follows. Let $\alpha=(c_1 \mid c_2 \mid \dots \mid c_m)$ be a special symbol. Let $w(c_i)$ denote the width of $c_i$. By the definition of special symbol, the blocks in $\alpha$ are all of width $1$, $2$, or $w$.

Let 
\begin{align*}
X_1 & = w(c_1)/2\\
X_2 & = w(c_1) + w(c_2) / 2\\
\dots \\
X_i &= w(c_1) + w(c_2) + \dots + w(c_{i-1})+ w(c_i) / 2\\
\end{align*}

This variable tells us how far to horizontally shift the center of the torus for the next block.  So, the imagined disk of diameter $w(c_i)$ that contains the disks of the $i$th block has center at $(X_i, 0)$.

Let $$D_i = w(c_1) + w(c_2) + \dots +w(c_i) - i .$$
This is a counter which tells us which angle we are on.  That is, the first $w(c_1) - 1$ angles correspond to the first block, the next $w(c_2) - 1$ angles correspond to the second block, and so on, so that angle $D_i$ is the last angle corresponding to the $i$th block.

\begin{enumerate}
\item If $w(c_i) =1$, that is, $c_i$ is the block with a single permutation element $c_i=|\sigma_m|$, then set 
$$(x_{\sigma_m}, y_{\sigma_m})= (X_i, 0).$$
\item If $w(c_i)=2$, that is,\ $c_i = | \sigma_m \sigma_{m+1}|$, then set
$$(x_{\sigma_m}, y_{\sigma_m})= (X_i - \frac{1}{2} \cos \theta_{D_i}, -\frac{1}{2} \sin \theta_{D_i}),$$
$$(x_{\sigma_{m+1}}, y_{\sigma_{m+1}}) = (X_i + \frac{1}{2} \cos \theta_{D_i},  \frac{1}{2} \sin \theta_{D_i}),$$
so that $\theta_{D_i}$ is the direction of the vector from disk $\sigma_{m}$ to disk $\sigma_{m+1}$.
\item If $w(c_i)=w$, i.e.,\ $c_i = | \sigma_m \sigma_{m+1} \dots \sigma_{m+w-1}|$, then
\begin{enumerate}
\item Initialize $(u_0, v_0) = (X_i,0)$.
\item For $k=1,2, \dots, w$, we let $(u_k, v_k)$ be the center of the imagined disk containing the first $w-k$ disks of the block, and $\theta_{D_i + 1 - k}$ will be the direction of the vector from $(u_k, v_k)$ to the $k$th-to-last disk of the block.  That is, for $k = 1$ we let
\[(u_1, v_1) = (u_0, v_0) - \frac{1}{2}(\cos \theta_{D_i}, \sin \theta_{D_i}),\]
and
\[(x_{\sigma_{m+w-1}}, y_{\sigma_{m+w-1}}) = (u_0, v_0) + \frac{w-1}{2}(\cos \theta_{D_i}, \sin \theta_{D_i}),\]
and for general $k$ we let
$$(u_{k},v_{k}) = (u_{k-1},v_{k-1}) - \frac{1}{2} (\cos \theta_{D_i + 1 - k}, \sin \theta_{D_i + 1 - k}),$$
and
$$(x_{\sigma_{m + w - k}},y_{\sigma_{m + w - k}}) = (u_{k-1},v_{k-1}) + \frac{w-k}{2} (\cos \theta_{D_i+1-k} , \sin \theta_{D_i+1-k}).$$
\end{enumerate}\end{enumerate}

This completes the construction of the cycle $Z_\alpha$.
\\

Now we must check that whenever $\alpha' \neq \alpha$ we have that $Z_\alpha \cap Z^*_{\alpha'} = \emptyset$, and that $Z_{\alpha}$ intersects $Z^*_{\alpha}$ transversely in a point for every $\alpha$.

Suppose that $$p =(x_1, y_1, \dots, x_n, y_n) \in Z_\alpha \cap Z^*_{\alpha'}.$$ Define an equivalence relation on $[n]$ by setting $k \sim \ell $ if $x_k = x_\ell$ in $p$. By the definition of cycle $Z_{\alpha}$, if $k \sim \ell$ then $k$ and $\ell$ are in the same block of $\alpha$. By the definition of $Z^*_{\alpha'}$, if $k$ and $\ell$ are in the same block of $\alpha'$, then $k \sim \ell$. So then if $p \in Z_\alpha \cap Z^*_{\alpha'},$
if $k$ and $\ell$ are in the same block of $\alpha'$, then they are in the same block of $\alpha$. 

By assumption, both $\alpha$ and $\alpha'$ are special symbols in $\symbols(n,w)$, so they both have $q$ blocks of width $j$, $r$ blocks of width $2$ and the remaining blocks of width one. So it must be that the converse is also true, that if  $k$ and $\ell$ are in the same block of $\alpha$, then they are in the same block of $\alpha'$. 

Moreover, the partition of $[n]$ given by the equivalence relation $\sim$ must be the same as the partition into blocks given by $\alpha$ and $\alpha
'$. So the elements within every block are vertically aligned. In the special symbol $\alpha'$, the first element of a block is greatest in the underlying permutation, and in $Z^*_{\alpha'}$ it corresponds to the element at the top of the column (i.e.,\ has the largest $y$-coordinate).

Consider any point on $Z_\alpha$ in which the elements in each block are vertically aligned, with the greatest element of each block (and hence first element, because $\alpha$ is special) on top.  We claim that there is only one such point, and that the order of the elements in each block is the same as the order of their corresponding disks from top to bottom. 

The first element in the block is on top by assumption. Then since the first two elements are vertically aligned and in an imagined disk of diameter 2, the next element of the block must lie immediately below the first element. Continuing by induction, if the first $k$ elements of the block are vertically aligned and in an imagined disk of diameter  $k$, then the $k$th element of the block must be immediately below the $(k-1)$st element.  Thus, the configuration is completely determined by the assumption that the first element of the block is on top, proving that if $Z_\alpha$ and $Z^*_{\alpha'}$ intersect, then $\alpha = \alpha'$ and the intersection is a single point.

The only thing left to verify is that in this case the intersection is transverse. Since $Z_a$ and  $Z^*_{\alpha}$ intersect at a single point and are of complementary dimension in the ambient manifold, the claim of transversality is equivalent to checking that the tangent space to $U(n,w)$ is the direct sum of the tangent spaces to $Z_{\alpha}$ and $Z_{\alpha}^*$. Roughly, the tangent space to $Z^*_\alpha$ corresponds to the set of ways to assign a vector to each disk such that for each block, the horizontal components are equal; in the tangent space to $Z_\alpha$, the vector corresponding to each disk is horizontal and for each block there is a single linear relation on the vectors.  We omit the details.
\end{proof}

Finally, we are ready to prove lower bounds on the Betti numbers of $\config(n, w)$.

\begin{proof}[Proof of lower bounds for part (2) of Theorem \ref{thm:main}]

We have just verified the conditions of Lemma \ref{lem:lower}, which then implies that if $n \ge qw + 2r$ (i.e.,\ for sufficiently large $n$) then 
$$ \beta_j \ge  \binom{n}{\underbrace{w, \dots, w}_{q \mbox{ times}}, \underbrace{2, \dots, 2}_{r \mbox{ times}}, n-qw-2r}q!  \left( (w-1)! \right )^q \left(q+1\right)^{n-qw-r}.$$

This counts the number of special symbols in $\symbols(n,w)$. The multinomial coefficient counts the number of ways to partition $n$ into $q$ subsets of size $w$, $r$ subsets of size $2$, and $n-qw-2r$ subsets of size $1$. There are $q!$ ways to order the subsets of size $w$, and $\left( (w-1)! \right)^q$ ways to order the terms in each subset, considering the restriction that the largest element must come first within each part. Finally, we place the blocks of width $2$ and $1$ between the blocks of width $w$, and there are $(q+1)^{n-qw-r}$ ways to do this.

If  $j$ and $w$ are fixed and $n \to \infty$, then we write the simpler asymptotic expression
$$\beta_j \left[ \config(n,w) \right] = \Omega \left((q+1)^n n^{qw+2r} \right).$$
\end{proof}

Here $f = \Omega(g)$ means that there exists a positive constant $c$ such that
$f(n) \ge c g(n)$
for all sufficiently large $n$. \\

\section{The phase portrait for every $n$} \label{sec:solid-liquid} \label{sec:regimes}

In this section we prove Theorem \ref{thm:regimes}. Everything follows quickly from the homotopy equivalence $\config(n,w) \hequiv \cell(n,w)$ in Section \ref{sec:homotopy}, and the non-triviality of the cycles constructed in Section \ref{sec:lower}.\\

\begin{proof}[Proof of Theorem \ref{thm:regimes}]

\noindent

\begin{enumerate}

\item Gas: This is identical to the proof of (1) of Theorem \ref{thm:main}, in Section \ref{sec:consequences}. The proof of isomorphism on homology holds for every $n$.\\

\item Liquid: If $1 \le w \le n-1$ and $0 \le j \le n - \left\lceil n/w \right\rceil$ we see first that $H_j [ \config( n, w) ] \neq 0$. Indeed, the cycles constructed in Section \ref{sec:lower} are already enough. One can partition $[n]$ into at most $\lceil n/w \rceil$ blocks of width at most $w$. By ordering elements within a block, and reordering blocks if necessary, then we have a special symbol $\alpha$ with at most $\left\lceil n/w \right\rceil$ blocks. This indexes a cycle $Z_{\alpha}$ of dimension at least $n - \left\lceil n/w \right\rceil$.

We next see that if $j \ge w-1$ then the inclusion map $i: \config(n,w) \to \config(n,\R^2)$  does not induce an isomorphism on homology
$$i_*:\ H_j [ \config(n,w) ] \rightarrow H_j [ \config(n,\R^2)].$$
We observe that the kernel of $i_*$ is nontrivial. Consider two different torus cycles $Z_{\alpha}$ and $Z_{\alpha'}$, indexed by two different special symbols $\alpha,\alpha' \in \symbols(n,w)$ where $\alpha'$ is obtained from $\alpha$ by transposing two blocks (keeping the order of the elements within a block). Since $n \ge w + 1$ and $j \ge w-1$, this is always possible. Indeed, the condition that $j \ge w-1$ ensures that $\alpha$ and $\alpha'$ have at least one block of width $w$, and the condition that $n \ge w+1$ ensures that there is at least one other block. 

We have shown that $Z_{\alpha}$ and $Z_{\alpha'}$ are not homologous in $\config(n,w)$, but $i_* (Z_{\alpha})$ and $i_* (Z_{\alpha'})$ are homologous in $\config(n,\R^2)$, so we conclude that $Z_{\alpha} - Z_{\alpha'}$ is in the kernel of $i_*$.\\

\item Solid: Finally, we check that if $w \ge 1$ and 
$j \ge n -  \left\lceil \frac{n}{w} \right\rceil +1,$
then $$H_j \left[ \config( n , w) \right] = 0.$$
We know from Section \ref{sec:homotopy} that $\config(n,w) \sim \cell(n,w)$. The largest dimension of a cell in $\cell(n,w)$ is $n - \left\lceil \frac{n}{w} \right\rceil$, since the minimum number of blocks is $\left\lceil \frac{n}{w} \right\rceil$. So if $j \ge n -  \left\lceil \frac{n}{w} \right\rceil +1,$ then there are no $j$-dimensional cells, in which case there is no nontrivial $j$-dimensional homology. \\
\end{enumerate}
\end{proof} 

\section{Discrete Morse theory} \label{sec:Morse}

In this section, we describe a discrete gradient vector field on $\cell(n,w)$.  Then in the next section, we prove an upper bound on the number of critical cells, thus giving an upper bound on the Betti numbers.  This upper bound completes the proof of Theorem~\ref{thm:main}, the asymptotic rate of growth of Betti numbers.

A \emph{discrete vector field} $V$ on a regular CW complex $X$ is a collection of pairs of faces $[\alpha, \beta]$ where $\alpha$ is a face of $\beta$ and $\dim \alpha = \dim \beta - 1$, and such that every face can be in at most one pair.  The discrete vector field $V$ is said to be \emph{gradient} if there are no closed $V$--walks.  A $V$--walk is a collection of pairs of faces $[\alpha_1, \beta_1], [\alpha_2, \beta_2], \ldots, [\alpha_r, \beta_r]$ where $[\alpha_i, \beta_i] \in V$ for every $i$ and $\alpha_{i+1}$ is a codimension $1$ face of $\beta_i$ other than $\alpha_i$, and the $V$--walk is \emph{closed} if $\alpha_r = \alpha_1$.

We call a face \emph{critical} if it is not in any pair.  The fundamental theorem of discrete Morse theory~\cite{Forman02} is that $X$ is homotopy equivalent to a CW complex $X'$, where $X'$ has exactly one cell for every critical face in $V$.  Any discrete gradient vector field gives an upper bound on the Betti numbers of the cell complex: each Betti number is at most the number of critical cells in the corresponding dimension.  So, we give an asymptotic upper bound on the number of critical cells to get an asymptotic upper bound on the Betti numbers.  

We begin by describing which cells will be critical with respect to the discrete gradient vector field that we will construct.  In the symbol of a cell in $\cell(n, w)$, we say that a block is \emph{top-heavy} if the largest element of that block is the first element.  We designate some pairs of blocks as leader/follower pairs, as follows.  Roughly, we look at pairs of consecutive blocks, starting with the first two blocks and moving to the right.  We may designate a pair of blocks to be a leader (the first block) and a follower (the next block), in which case we look next at the two blocks after these, so that no follower gets immediately also labeled a leader, and the leader/follower pairs are disjoint.

More precisely, we say that a block is a \emph{leader} if it is not a follower and its first element is larger than all the other elements of that block and also all the elements of the next block; we say that a block is a \emph{follower} if the previous block is a leader.  These definitions allow us to describe the critical cells of our discrete gradient vector field.  We say that a cell of $\cell(n, w)$ is \emph{$k$--crit} if the following is true for the first $k$ blocks: every block that is not top-heavy is a follower, and every leader/follower pair has greater than $w$ elements, combined.  Our goal is to verify that this definition of $k$--crit agrees with which cells are critical with respect to the discrete gradient vector field we will construct.

\begin{theorem}\label{thm-v}
There is a discrete gradient vector field $V$ on $\cell(n, w)$, such that the critical cells are exactly those that are $k$--crit for all $k$.
\end{theorem}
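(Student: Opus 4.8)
The plan is to construct the discrete vector field $V$ explicitly by a greedy, block-by-block matching procedure, and then verify separately that it is a well-defined discrete vector field (each face in at most one pair), that it is gradient (no closed $V$-walks), and that its critical cells are precisely the cells that are $k$-crit for all $k$. First I would set up notation: given a cell indexed by a symbol $\alpha = (c_1 \mid c_2 \mid \dots \mid c_m)$, I process the blocks from left to right, maintaining a pointer. At each step, looking at the current block $c_i$ and the next block $c_{i+1}$, I decide whether to (a) leave $c_i$ critical and advance the pointer by one, or (b) pair the cell with another cell via an operation on $c_i$ and $c_{i+1}$ and advance the pointer past $c_{i+1}$. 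The pairing operation will either merge $c_i$ and $c_{i+1}$ into one block (removing a bar, moving up in dimension) or split a block (adding a bar, moving down), in the manner of a standard acyclic matching on the face poset $\poset(n)$; the leader/follower terminology in the statement is exactly the bookkeeping for which blocks get merged.

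The key design decision is the matching rule on a pair of consecutive blocks. Here is the natural choice suggested by the definitions: at the current position, if $c_i$ is top-heavy (its largest element is first) and its first element also dominates all elements of $c_{i+1}$, and $|c_i| + |c_{i+1}| \le w$, then we merge $c_i$ and $c_{i+1}$ — this is an ``up'' pairing with the cell whose symbol has $c_i$ and $c_{i+1}$ combined into a single top-heavy block (with the shuffle determined canonically, e.g.\ the elements of $c_{i+1}$ inserted in their relative order right after the leading element). Conversely, if $c_i$ is a block of size $\ge 2$ whose largest element is first and whose first element dominates the rest, we may instead perform a ``down'' pairing by splitting off that leading element as its own singleton block before $c_i$, provided the resulting pair would be matched ``up'' by the same rule. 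The rule must be arranged so these two cases are complementary, giving a genuine involution-free matching: a cell is matched ``down'' (split a leader out) exactly when it is not matched ``up.'' After fixing this rule, one reads off that a cell survives (is critical) past the first $k$ blocks precisely when, among those blocks, every non-top-heavy block is a follower of some leader, and every leader/follower pair is ``too big'' (more than $w$ elements combined) to be merged — which is exactly the $k$-crit condition. Iterating over all $k$ gives the critical cells as stated.

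Then I would verify the three required properties in order. Well-definedness (each face in at most one pair) follows because the matching partner of a cell is determined by the first block position at which the greedy procedure acts, and that position is an intrinsic function of the symbol; I'd check that if a cell $\beta$ is obtained from $\alpha$ by a merge at position $i$, then running the procedure on $\beta$ also halts and acts first at position $i$, so the matching is symmetric. Dimension changes by exactly one (one bar removed or added), as required. For the gradient (acyclicity) property, I would use the standard device: exhibit a function on cells that is strictly monotone along any non-trivial $V$-walk. The natural choice is to order cells first by the position index $i$ where the procedure acts (walks can only ``move the action rightward or keep it fixed''), and, at a fixed position, by a secondary invariant such as the multiset of block sizes to the left, or the value of the leading element — arranged so that a step $[\alpha_s,\beta_s] \to [\alpha_{s+1},\beta_{s+1}]$ with $\alpha_{s+1}$ a facet of $\beta_s$ other than $\alpha_s$ strictly decreases (or strictly increases) this invariant, ruling out a return to $\alpha_1$. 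Finally, the identification of critical cells with ``$k$-crit for all $k$'' is a direct unwinding of the rule: a cell is unmatched iff the greedy procedure runs off the end without ever finding a mergeable leader/follower pair, and the obstruction to merging at each processed position is exactly top-heaviness plus the $>w$ size condition.

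I expect the main obstacle to be proving acyclicity cleanly. The subtlety is that a $V$-walk can traverse cells of fluctuating dimension and can, in principle, cycle through configurations that differ only in the internal shuffle order of a block being repeatedly merged and split. The delicate point is to show that the ``position where the procedure acts'' is genuinely non-decreasing along any $V$-walk and strictly increases often enough — intuitively, once the greedy procedure has committed all blocks $c_1,\dots,c_{i-1}$ as critical or as leader/follower pairs, no facet move available in the walk can disturb that prefix, because disturbing it would require changing a block to the left of position $i$, which the merge/split at position $i$ does not touch. Making this ``locality'' argument airtight — that the codimension-one faces of $\beta$ other than the matched one all have their action-position $\ge$ that of $\beta$, with a strict inequality or a strictly decreasing secondary invariant whenever equality holds — is where the real work lies, and it is likely to be deferred to a lemma with a somewhat involved case analysis over whether the perturbed block is the leader, the follower, or an untouched block.
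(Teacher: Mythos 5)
Your overall architecture matches the paper's (an explicit pairing rule on consecutive blocks, an involution check, and a monotone invariant to rule out closed $V$--walks), but two of your concrete choices fail, and they are exactly the two places where the construction is delicate. First, your matching rule is not an involution. Your ``up'' move merges a leader $c_i$ with its follower $c_{i+1}$ into a \emph{top-heavy} block by inserting the follower's elements right after the leading element; this destroys the information of where the original bar sat, so no ``down'' move can recover the split (your proposed down move, which peels off only the leading element as a singleton, inverts the merge only when the leader happened to be a singleton). Worse, because your down move targets top-heavy blocks of size $\ge 2$, every such block of size at most $w$ would get matched, so your critical cells could not contain multi-element top-heavy leader blocks --- contradicting the $k$--crit characterization you are trying to prove, which requires every non-follower block in a critical cell to \emph{be} top-heavy. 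The paper's resolution is to swap before merging: the up move at a leader/follower pair produces the block ``follower then leader,'' which is \emph{not} top-heavy, and whose largest element marks exactly where the original bar was; the down move then applies precisely to non-top-heavy, non-follower blocks, splitting just before the largest element and swapping. This makes $v$ a genuine involution and makes the fixed points exactly the $k$--crit cells.

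Second, your proposed primary invariant for acyclicity --- that the position where the procedure acts is non-decreasing along a $V$--walk --- is false. A step of a $V$--walk passes from $\beta_i$ to an arbitrary codimension-one face other than $\alpha_i$, and such a face can be obtained by splitting a block strictly to the \emph{left} of the position where $\alpha_i$ was merged; this can create a non-top-heavy, non-follower block earlier in the symbol and move the action position leftward. So the ``locality'' claim you flag as the crux does not hold, and you need a finer invariant. The paper's choice is a lexicographic key recording, for each block in order, its first element (replaced by $0$ if the block is a follower) and its size; a short case analysis on whether the split block of $\alpha_{i+1}$ is at, before, or after the merge position shows this key strictly decreases along any $V$--walk, including in the problematic ``earlier block'' case. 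Your fallback suggestions (multiset of block sizes to the left, value of the leading element) are in the right spirit but are not shown to decrease, and the size-multiset alone does not suffice since a split can leave sizes to the left unchanged while only reordering elements.
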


In order to define the discrete vector field $V$, we describe how to find the matching cell for each non-critical cell of $\cell(n, w)$.  We define a function $v$ that sends each cell to its matching cell; that is, if $[\alpha, \beta]$ is a pair in $V$, then we will have $v(\alpha) = \beta$ and $v(\beta) = \alpha$, and for any critical cell $\alpha$, we will have $v(\alpha) = \alpha$.  The definition of $v$ is as follows.  Given a cell $\alpha$, if $\alpha$ is $k$--crit for all $k$, then we set $v(\alpha) = \alpha$.  Otherwise, we find $k$ such that $\alpha$ is $(k-1)$--crit but not $k$--crit.  There are two possibilities:
\begin{enumerate}
\item The $(k-1)$st block is a leader, the $k$th block is a follower, and their combined number of elements is at most $w$; or
\item The $k$th block is not a follower and is not top-heavy.
\end{enumerate}
We refer to the first case as the ``match-up at $k-1$'' case, and we refer to the second case as the ``match-down at $k$'' case.  In the first case, we obtain $v(\alpha)$ by swapping the $(k-1)$st block with the $k$th block and removing the bar between them.  In the second case, we obtain $v(\alpha)$ by adding a bar just before the largest element of the $k$th block, to separate it into two blocks, and then swapping those two blocks.  In order to be able to use $v$ to define $V$, we need to check that $v$ actually matches the cells in pairs.

\begin{lemma}\label{lemma-involution}
The function $v$ is an involution; that is, we have $v(v(\alpha)) = \alpha$ for every cell $\alpha$ of $\cell(n, w)$.
\end{lemma}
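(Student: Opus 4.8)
The statement to prove is that the cell-matching function $v$ defined just above is an involution. The strategy is direct case analysis: starting from a non-critical cell $\alpha$, I compute $v(\alpha)$, observe which case it falls into (match-up or match-down), and verify that applying $v$ a second time returns $\alpha$. The key preliminary observation is that $v$ changes nothing in the first $k-2$ blocks (in the match-up case) or first $k-1$ blocks (in the match-down case), so in particular $v(\alpha)$ is still $(k-2)$--crit (resp.\ $(k-1)$--crit); thus when we apply $v$ to $v(\alpha)$, the algorithm will again locate the relevant index near $k$, and we only need to track a bounded window of blocks.

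\textbf{Match-down, then match-up.} Suppose $\alpha$ is $(k-1)$--crit but not $k$--crit via the match-down case: the $k$th block $B$ is not a follower and not top-heavy. Write $B = (a_1\,a_2\,\dots\,a_s)$ where the largest element $M = a_i$ occurs at position $i>1$. Then $v(\alpha)$ is obtained by splitting $B$ into $B' = (a_1\,\dots\,a_{i-1})$ and $B'' = (a_i\,\dots\,a_s) = (M\,a_{i+1}\,\dots\,a_s)$, then swapping them, so that in $v(\alpha)$ the $k$th block is $B''$ and the $(k+1)$st is $B'$. Now I must check that $v(\alpha)$ is $(k-1)$--crit but not $k$--crit via the match-up case at $k$. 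First, $B''$ is top-heavy (its first element $M$ is the max of $B$, hence of $B''$), so the only possible failure of $k$--crit in $v(\alpha)$ concerns whether $B''$ is a leader whose follower is $B'$ with combined size $\le w$. Their combined size is $s \le w$ automatically since $B$ was a block of $\cell(n,w)$. And $B''$ is a leader precisely because $M$ is larger than everything in $B''$ and everything in $B'$ (it is the global max of $B$), and $B''$ is not a follower — which holds because the $(k-1)$st block of $v(\alpha)$ equals the $(k-1)$st block of $\alpha$, which, since $\alpha$ is $(k-1)$--crit, is either top-heavy-and-not-a-leader-of-$B$ or is itself a follower; I need to argue that in neither subcase does it become a leader of $B''$. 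This is exactly where I expect to have to be careful: the definition of leader/follower is sequential, so I must confirm that inserting the split doesn't retroactively change the leader/follower designation of earlier blocks. Because the first $k-1$ blocks of $\alpha$ and $v(\alpha)$ agree and the leader/follower pairing is determined left-to-right, the designations of blocks $1,\dots,k-2$ are identical, and block $k-1$'s designation depends only on blocks $k-1$ and $k$; I will check the two subcases (block $k-1$ top-heavy non-leader, or block $k-1$ a follower) show block $k-1$ is not a leader in $v(\alpha)$ either, so $B''$ is not a follower. Granting this, applying $v$ to $v(\alpha)$ triggers match-up at $k-1$ (indices shifted so it's "match-up at the $k$th position" in the language above — the leader is $B''$, the follower is $B'$), which swaps $B''$ and $B'$ and deletes the bar, recovering $B$ in position $k$: that is $\alpha$.

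\textbf{Match-up, then match-down.} Symmetrically, suppose $\alpha$ is $(k-1)$--crit but not $k$--crit via match-up at $k-1$: the $(k-1)$st block $L$ is a leader, the $k$th block $F$ its follower, with $|L| + |F| \le w$. Then $v(\alpha)$ swaps them and deletes the bar, producing a single block $C$ in position $k-1$ whose element list is (elements of $F$)(elements of $L$). Since $L$ was a leader, its first element is the max of $L \cup F$, hence is the max of $C$; it sits at position $|F|+1 > 1$ in $C$, so $C$ is not top-heavy. I then need: $v(\alpha)$ is $(k-2)$--crit, and $C$ is not a follower in $v(\alpha)$, so that the match-down case applies to $C$ at index $k-1$. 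That $C$ is not a follower again reduces to the designation of block $k-2$, which is unchanged from $\alpha$; since in $\alpha$ the $(k-1)$st block $L$ was designated a leader (not a follower), block $k-2$ was not a leader of $L$, and since $C$ begins with the elements of $F$ (whose elements are all $< \max L$), block $k-2$ is not a leader of $C$ either. Then $v$ applied to $v(\alpha)$ does match-down at $k-1$: it finds the max of $C$ (namely the old first element of $L$), puts a bar just before it — which is exactly the $L$/$F$ split point, since everything before that max in $C$ is precisely $F$ — and swaps the two pieces back, giving $F$ in position $k-1$ and $L$ in position $k$, then... wait: match-down swaps to put the max-containing piece first, i.e.\ it yields (piece starting with max)(other piece) $= L\,\vert\,F$ — no: it adds a bar before the max and swaps those two blocks, so the block containing the max moves to position $k-1$ and the other to $k$, which is $L\,\vert\,F$. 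Hence $v(v(\alpha)) = \alpha$.

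\textbf{Main obstacle.} The genuine content, and the step I expect to require the most care, is verifying that $v$ does not disturb the leader/follower designations of the blocks to the \emph{left} of the active window — i.e.\ that $(k-1)$--crit (or $(k-2)$--crit) is preserved by $v$ and that the newly created block is correctly \emph{not} a follower. Everything else is bookkeeping about which piece ends up first after a swap. I would isolate this as a short sub-lemma: "if $\alpha$ and $\alpha'$ agree on their first $m$ blocks, then blocks $1,\dots,m-1$ receive the same leader/follower labels in both," which is immediate from the left-to-right, locally-determined nature of the labeling rule, and then apply it with $m = k-1$ or $m = k$ as appropriate. With that in hand the four-way case check closes.
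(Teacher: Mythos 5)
Your proof is correct and follows essentially the same route as the paper's: a two-way case analysis showing that a match-up move lands in the match-down case and vice versa, with the crucial point in both being that the largest element of the affected block is preserved, so the leader/follower designations of the preceding blocks (and hence the follower status of the new block) are unchanged. The sub-lemma you isolate at the end is exactly the observation the paper uses implicitly when it notes that "the previous block is the same and the current block has the same largest element."
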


\begin{proof}
Suppose that $\alpha$ is a cell in the match-up at $k-1$ case.  We want to show that $v(\alpha)$ is in the match-down at $k-1$ case.  We know that $v(\alpha)$ is $(k-2)$--crit because $\alpha$ and $v(\alpha)$ agree in the first $k-2$ blocks.  Suppose for the sake of contradiction that block $k-1$ of $v(\alpha)$ is a follower.  Then block $k-1$ of $\alpha$ is also a follower, because in both cases the previous block is the same and the current block has the same largest element.  But we know that block $k-1$ of $\alpha$ is a leader and thus not a follower, giving a contradiction.  So block $k-1$ of $v(\alpha)$ is not a follower.  It is clear from the construction that block $k-1$ of $v(\alpha)$ is not top-heavy, so $v(\alpha)$ is in the match-down at $k-1$ case, and it is also clear from the construction that applying $v$ to $v(\alpha)$ gives $\alpha$ again.

Now suppose that $\alpha$ is a cell in the match-down at $k$ case.  We want to show that $v(\alpha)$ is in the match-up at $k$ case.  We know that $v(\alpha)$ is $(k-1)$--crit because $\alpha$ and $v(\alpha)$ agree in the first $k-1$ blocks.  To show that $v(\alpha)$ is $k$--crit, we need to check that block $k$ of $v(\alpha)$ is top-heavy and is not a follower.  It is clear from the construction that block $k$ of $v(\alpha)$ is top-heavy.  Suppose for the sake of contradiction that block $k$ of $v(\alpha)$ is a follower.  Then block $k$ of $\alpha$ is also a follower, because in both cases the previous block is the same and the current block has the same largest element.  But we know that block $k$ of $\alpha$ is not a follower, because $\alpha$ is in the match-down at $k$ case.  Thus block $k$ of $v(\alpha)$ cannot be a follower, and so $v(\alpha)$ is $k$--crit.  Knowing that block $k$ of $v(\alpha)$ is not a follower, it is clear from the construction that this block is a leader and that its combined number of elements with the next block is at most $w$, so $v(\alpha)$ is in the match-up at $k$ case.  Then it is also clear from the construction that applying $v$ to $v(\alpha)$ gives $\alpha$ again.

Thus if $\alpha$ is in any of the three cases---critical, match-up, or match-down---we have $v(v(\alpha)) = \alpha$.
\end{proof}

Having shown that every orbit of $v$ has either one or two elements, we can define $V$ to be the set of two-element orbits; that is, if $v(\alpha) = \beta$ and $v(\beta) = \alpha$, with $\beta \neq \alpha$, then the definition of $v$ implies that we may swap the labels if necessary so that $\alpha$ is a codimension $1$ face of $\beta$, and we let $[\alpha, \beta]$ be one of the pairs in $V$.  To finish the proof of Theorem~\ref{thm-v}, we need to show that $V$ is gradient.

\begin{lemma}\label{lemma-gradient}
The discrete vector field $V$ is gradient; that is, it does not admit any closed $V$--walks.
\end{lemma}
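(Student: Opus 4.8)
The plan is to define a discrete Morse function—more precisely, a rank function on cells that strictly decreases along every arrow of $V$—and conclude that no closed $V$--walk can exist. Since each $V$--walk step goes $\alpha_i \to \beta_i \to \alpha_{i+1}$ where $[\alpha_i,\beta_i]\in V$ and $\alpha_{i+1}$ is a codimension-$1$ face of $\beta_i$ other than $\alpha_i$, it suffices to exhibit a function $f$ on cells of $\cell(n,w)$ such that $f(\alpha_i) > f(\alpha_{i+1})$ whenever $\alpha_{i+1}$ arises this way; then a closed walk would force $f(\alpha_1) > f(\alpha_1)$, a contradiction.

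First I would pin down the local structure of a $V$--walk step. Suppose $[\alpha,\beta]\in V$ with $\beta = v(\alpha)$. By the definition of $v$, there is an index $k$ with $\alpha$ being $(k-1)$--crit but not $k$--crit, and $\beta$ is obtained either by a ``match-up at $k-1$'' or a ``match-down at $k$'' move; in all cases $\alpha$ and $\beta$ agree on the first $k-2$ blocks (in fact the relevant modifications are confined to blocks $k-1$ and $k$, or to block $k$). A codimension-$1$ face $\gamma = \alpha_{i+1}$ of $\beta$ other than $\alpha$ is obtained by inserting a single bar into some block of $\beta$, splitting it into two blocks by a deconcatenation (which must respect the relative order within the block). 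The key claim I would prove is: for any such $\gamma\ne\alpha$, the first index at which $\gamma$ fails to be ``crit'' (equivalently, the length of the maximal initial segment of blocks of $\gamma$ that is $k'$--crit) is strictly less than the corresponding index for $\alpha$—or, more robustly, the tuple recording (position of the first non-crit block, followed by a tie-breaking statistic of the blocks to the left) strictly decreases in an appropriate lexicographic order. I would take $f(\alpha)$ to be exactly this tuple: the leading coordinate is the smallest $k$ such that $\alpha$ is not $k$--crit (with value $n+1$, say, if $\alpha$ is critical), and the remaining coordinates encode, reading the first few blocks left to right, data such as the largest element of each block or its size, ordered so that a $V$--walk step decreases it lexicographically.

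The main obstacle—and the step requiring the most care—is verifying the strict decrease in the ``match-down'' case, where $\beta$ has an extra bar compared to $\alpha$: here $\gamma$ could conceivably re-insert a bar ``further right'' than where the interesting action happened, so the naive leading coordinate might not drop. This is the standard subtlety in discrete-Morse arguments of this flavor, and it is resolved by checking that inserting a bar into block $k$ of $\beta$ (to get any $\gamma\ne\alpha$) either produces a block that is not top-heavy and not a follower at a position $\le k$, or else damages the leader/follower matching among the first $k$ blocks; in either case $\gamma$ fails $k$--crit for some $k'\le k$ with strictly smaller tie-breaking statistic than $\alpha$ had. I would organize the verification by cases mirroring Lemma~\ref{lemma-involution}: match-up step followed by a face; match-down step followed by a face. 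Once the rank function is shown to strictly decrease at every $V$--walk step in every case, the absence of closed $V$--walks is immediate, which completes the proof that $V$ is gradient and hence finishes Theorem~\ref{thm-v}.
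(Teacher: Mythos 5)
Your overall strategy---produce a function on cells that strictly decreases, in a suitable lexicographic order, at every step $\alpha_i \to \beta_i \to \alpha_{i+1}$ of a $V$--walk---is exactly the paper's strategy. But the specific function you propose does not work, and the part you leave unspecified is where all the difficulty lives. Concretely, your leading coordinate (``the smallest $k$ such that $\alpha$ is not $k$--crit'') is not even monotone along a $V$--walk step, so no choice of tie-breaker can rescue the lexicographic argument. Take $w=3$ and $\alpha_i = (5\ 1 \mid 2 \mid \cdots)$, which is match-up in its first two blocks (leader $(5\ 1)$, follower $(2)$, combined size $3 \le w$), so its first failure index is $2$. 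Then $\beta_i = (2\ 5\ 1 \mid \cdots)$, and the codimension-$1$ face $\alpha_{i+1} = (2\ 1 \mid 5 \mid \cdots)$ (a legitimate deconcatenation of $(2\ 5\ 1)$) has first block top-heavy and not a leader, and second block top-heavy, so it is $2$--crit and its first failure index is at least $3$: your leading coordinate \emph{increases}. Your unspecified tie-breaker also matters more than you suggest: in the case where the face is taken in a block to the right of the merged block, the merged block keeps the same \emph{largest} element but acquires a smaller \emph{first} element, so ``largest element of each block'' would tie exactly where the argument needs a strict drop.

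The paper's resolution is to define $\key(\alpha) \in \bigoplus_{i=1}^{\infty}\mathbb{Z}$ by recording, for the $k$th block, the pair (first element of the block, or $0$ if the block is a follower; number of elements of the block), and to compare lexicographically. One then lets $k$ be the block merged in passing from $\alpha_i$ to $\beta_i$ and $k'$ the block split to form $\alpha_{i+1}$, and checks the three cases $k'=k$, $k'<k$, $k'>k$ separately; in the example above the decrease is seen already in entry $1$ ($2<5$), precisely because followers are zeroed out and non-followers contribute their first (not largest) element. So while you have correctly identified the shape of the argument and the locus of difficulty (faces of $\beta_i$ taken ``elsewhere'' in the symbol), the proposal as written has a genuine gap: the decreasing quantity is never correctly pinned down, and the case analysis that constitutes the proof is not carried out.
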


\begin{proof}
Suppose for the sake of contradiction that $[\alpha_1, \beta_1], [\alpha_2, \beta_2], \ldots, [\alpha_r, \beta_r]$ is a closed $V$--walk.  We define a function
\[\key \co \poset(n, w) \rightarrow \bigoplus_{i = 1}^\infty \mathbb{Z}\]
 and show that if we compare the various $\key(\alpha_i)$, they are in strictly decreasing lexicographical order.  This gives a contradiction with the assumption that the $V$--walk is closed with $\alpha_r = \alpha_1$.
 
 The key function is defined as follows.  Given the symbol $\alpha$ of a cell in $\cell(n, w)$, we consider each block, and we set entry $2k-1$ of $\key(\alpha)$ to be the first element of the $k$th block, unless that block is a follower, in which case we set that entry to be zero; in either case, we set entry $2k$ of $\key(\alpha)$ to be the number of elements of the $k$th block.  Past twice the number of blocks, all entries of $\key(\alpha)$ are zero.  The lexicographical order on $\bigoplus_{i = 1}^\infty \mathbb{Z}$ is defined as follows: to compare two elements, we find the first entry where they differ, and we order the elements by their values in $\mathbb{Z}$ at that entry.
 
 We claim that for any $i$, we have $\key(\alpha_{i+1}) < \key(\alpha_i)$.  Let $k$ be the block where $\alpha_i$ merges to make $\beta_i$; that is, $\alpha_i$ is match-up at $k$ and $\beta_i$ is match-down at $k$.  Some block $k'$ of $\beta_i$ is split to form $\alpha_{i+1}$, and there are three cases: it is the same block $k' = k$, it is an earlier block $k' < k$, or it is a later block $k' > k$.  
 
 Suppose $k' = k$.  We know that block $k$ of $\alpha_i$ is the longest subblock of block $k$ of $\beta_i$ that begins with the largest element of that block, so comparing entries $2k-1$ and $2k$ of $\key(\alpha_i)$ and $\key(\alpha_{i+1})$, we find $\key(\alpha_{i+1}) < \key(\alpha_i)$ in this case.  
 
 Suppose $k' < k$.  Because $\beta_i$ is $(k-1)$--crit, the block $k'$ that is split is either top-heavy or a follower, and block $k'$ of $\alpha_{i+1}$ is a subblock of block $k'$ of $\beta_i$.  In the top-heavy case, comparing at entries $2k'-1$ and $2k'$ gives $\key(\alpha_{i+1}) < \key(\beta_i)$, and because $\beta_i$ and $\alpha_i$ agree past block $k'$, this implies $\key(\alpha_{i+1})$.  In the follower case, block $k'$ of $\alpha_{i+1}$ remains a follower, so comparing at entry $2k' - 1$ gives $\key(\alpha_{i+1}) < \key(\beta_i)$ and so $\key(\alpha_{i+1}) < \key(\alpha_i)$.
 
 Suppose $k' > k$.  Then block $k$ of $\alpha_{i+1}$ is the same as block $k$ of $\beta_i$, which has a smaller first element than block $k$ of $\alpha_i$ (which is a leader and not a follower).  Thus, comparing at entry $2k-1$ gives $\key(\alpha_{i+1}) < \key(\alpha_i)$.
 
 Thus, in all three cases the sequence $\key(\alpha_i)$ is strictly decreasing and so cannot be a cycle, contradicting the existence of a closed $V$--walk, and so $V$ is gradient.
\end{proof}

Together, Lemma~\ref{lemma-involution} and Lemma~\ref{lemma-gradient} imply Theorem~\ref{thm-v}.

\begin{proof}[Proof of Theorem~\ref{thm-v}]
Lemma~\ref{lemma-involution} shows that the discrete vector field $V$ specified by the function $v$ is well-defined: each cell can be in at most one pair in $V$.  The construction of $v$ automatically implies that the critical cells of $V$ are those that are $k$--crit for all $k$, because those are the only cells that are fixed points of $v$.  Lemma~\ref{lemma-gradient} shows that the discrete vector field $V$ is gradient.
\end{proof}

\section{Asymptotic upper bounds} \label{sec:upper}

In order to finish the proof of Theorem \ref{thm:main}, we need to prove an asymptotic upper bound on the number of critical cells of each dimension.  To do this, we group the critical cells of each dimension $j$ into finitely many groupings and prove that each grouping satisfies the asymptotic bound.  The groupings are called skylines.  Roughly, the skyline retains the information about which blocks form leader/follower pairs and about the sequence of sizes of blocks, but forgets the numbers (corresponding to labels of disks) and all the blocks of size $1$ that are neither leaders nor followers.  Given the symbol of a critical cell in $\cell(n, w)$, we refer to each leader/follower pair as a $2$-block \emph{barrier}.  We find the \emph{skyline} of that cell by the following process: we delete all the blocks that have just one element and are neither leaders nor followers (along with a bar adjacent to each), we replace the first element of each leader block by $1$, and we replace all the other numbers in the symbol by $0$.  

The resulting skyline is a kind of symbol in which all of the numbers are $0$ or $1$.  If the original cell was $j$--dimensional, then $j$ is the number of zeros and ones in the skyline minus the number of blocks in the skyline, much as in the original cell.  Any block with only one element is part of a barrier, so there are only finitely many different skylines for each $j$, independent of $n$.  For each skyline $S$, we let $b(S)$ (``barriers'') denote the number of barriers, equal to the number of ones in $S$, and we let $z(S)$ (``zeros'') denote the number of zeros in $S$.  In preparation for proving Theorem \ref{thm:main}, the following lemma implies an upper bound on the number of critical cells with a given skyline.

\begin{lemma}\label{lemma-skyline}
For every skyline $S$, there is an injective function $\code_S$ from the set of critical cells with skyline $S$ into the set $[n]^{z(S)}\times [b(S) + 1]^n$.
\end{lemma}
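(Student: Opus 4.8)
The plan is to exhibit the pair $\code_S(\alpha) = (\vec u, \vec f)$ concretely and then argue that the critical cell $\alpha$ can be rebuilt from $(\vec u,\vec f)$ together with the fixed skyline $S$; uniqueness of the rebuilt cell is exactly the injectivity we want. Recall that $\alpha$ being critical means it is $k$--crit for all $k$, so each of its blocks is a singleton, or a top-heavy block lying in no leader/follower pair, or a member of a $2$--block barrier of combined size greater than $w$, and $S$ is obtained by deleting the non-barrier singletons, marking each leader's first element by $1$, and zeroing every other label. I define the first coordinate $\vec u\in[n]^{z(S)}$ to be the sequence of disk labels of $\alpha$ occupying the $z(S)$ slots that became zeros of $S$, read left to right; thus $\vec u$ records all barrier labels except the first element of each leader block, together with all labels of the surviving non-barrier blocks of size at least $2$. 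I define the second coordinate $\vec f\in[b(S)+1]^n$ by letting $\vec f(d)$ be $1$ plus the number of barriers of $\alpha$ lying entirely to the left of the block of $\alpha$ containing the disk $d$.

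The crucial point is the identity: for each $i$ with $1\le i\le b(S)$, the first element of the $i$th leader block of $\alpha$ equals $\max\{\,d\in[n] : \vec f(d)=i\,\}$. Indeed, the disks $d$ with $\vec f(d)=i$ are precisely those lying in the $i$th barrier together with those lying in the gap immediately before it (and the top class $i=b(S)+1$ consists of the disks in the last gap). Inside the $i$th barrier the leader's first element is, by the definition of a leader, larger than every other element of that barrier. And inside any gap the skyline leaves no leaders, which forces each singleton block of the gap to be smaller than the largest element of the block immediately to its right (hence forces consecutive singletons to increase); applying this to the singleton just before the $i$th barrier shows that every disk in that gap is smaller than that leader's first element. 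Combining the two observations, the maximum of $\{d:\vec f(d)=i\}$ is that leader's first element.

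Granting the identity, here is the reconstruction. From $S$ alone we know the number, left-to-right order, and sizes of all barriers and of all surviving non-barrier blocks of size at least $2$, hence which coordinates of $\vec u$ belong to each of them and, for each surviving non-barrier block, which of the $b(S)+1$ gaps it sits in. Using the identity we recover the first element of every leader as the maximum of the corresponding $\vec f$--class, and from $\vec u$ we recover the remaining ordered entries of every leader and follower block and the ordered contents of every surviving non-barrier block. The disks not yet placed are exactly the non-barrier singletons; since $\vec f$ partitions $[n]$ with the class $\{d:\vec f(d)=i\}$ equal to the disks of the $i$th barrier together with those of the gap just before it, the singletons of a given gap are obtained from the appropriate $\vec f$--class by removing the disks already assigned to the next barrier and to the surviving non-barrier blocks of that gap. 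Finally, the order of the blocks inside each gap is determined: the relative order of the surviving non-barrier blocks is read off from $S$, and the singletons must then be interleaved so that each is smaller than the largest element of whichever block stands immediately to its right, which leaves exactly one admissible arrangement. Hence $\alpha$ is determined by $(\vec u,\vec f)$ and $S$, so $\code_S$ is injective.

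The step I expect to demand the most care is the claim, used twice above, that the leader/follower/top-heavy constraints genuinely pin down the leftover structure: one must check carefully from these definitions that inside a gap the singletons are forced to increase and to lie below the maxima of their right neighbors, and that---given the $S$--order and the recovered contents of the surviving non-barrier blocks in a gap---there is a unique way to interleave the singletons, equivalently that no two distinct critical cells with skyline $S$ can produce the same $\vec u$ and $\vec f$. The bookkeeping around the surviving non-barrier blocks of size at least $2$, which $S$ retains but which are not barriers, is the fussiest part; everything, however, reduces to unwinding exactly when a block is, or is not, a leader.
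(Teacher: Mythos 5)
Your proposal is correct and follows essentially the same route as the paper: the same two coordinates (the labels at the zeros of $S$ read left to right, and for each disk the index of the interval/gap-plus-barrier containing it), the same key identity that a leader's first element is the maximum of its $\vec f$--class because all non-barrier blocks in a critical cell are top-heavy with increasing initial elements, and the same unique re-interleaving of the deleted singletons. No gaps.
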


\begin{proof}
The function $\code_S$ is defined as follows.  Given a critical cell $\alpha$ with skyline $S$, we can map $\alpha$ to an element of $[n]^{z(S)}$ by recording the original number in $\alpha$ corresponding to each zero in $S$, in the order these numbers appear in $\alpha$.  For the second coordinate, we divide the symbol of $\alpha$ into $b(S) + 1$ intervals: all the blocks up through the first barrier, all the blocks after the first barrier and up through the second barrier, and so on, with the last interval being all the blocks after the last barrier.  Then we can map $\alpha$ to an element of $[b(S) + 1]^n$ by recording, for each number in $\alpha$, which of the $b(S) + 1$ intervals it appears in.

To show that the function $\code_S$ is injective, we show how to recover $\alpha$ from $\code_S(\alpha)$.  The $[n]^{z(S)}$ coordinate specifies the original number for each $0$ in $S$, so what remains is to find the original number for each $1$ in $S$ and to figure out where to insert the remaining numbers as one-element blocks.  We can recover the original number for each $1$ in $S$ by finding which of the $b(S) + 1$ intervals ends with that barrier, selecting all the numbers in that interval, and taking the greatest of those numbers---the preceding blocks in the interval are top-heavy with initial elements in increasing order, and the $1$ corresponds to the initial element of a leader block.  Then, for all the numbers that do not correspond to zeros or ones in $S$, we find which of the $b(S) + 1$ intervals each number belongs to, and insert it as a one-element block into that interval in such a way that the initial elements of all the blocks in that interval (excluding the follower block at the end) are in increasing order.  Because we can use this process to recover $\alpha$ from $\code_S(\alpha)$, the function $\code_S$ is injective.
\end{proof}

Putting these bounds together for all finitely many skylines, we can finish the proof of Theorem \ref{thm:main}.

\begin{proof}[Proof of Theorem \ref{thm:main}]
The statements about the gas regime and the solid regime have already been addressed, and in Section~\ref{sec:lower} we have shown that if $j = q(w-1) + r$ with $q \geq 1$ and $0 \leq r < w-1$, then we have
\[\beta_j[\config(n, w)] = \Omega((q+1)^nn^{qw + 2r}).\]
Thus, what remains is to prove that in this case we also have
\[\beta_j[\config(n, w)] = O((q+1)^nn^{qw + 2r}).\]
Lemma~\ref{lemma-skyline} implies that for any skyline $S$, the number of critical cells with that skyline is at most $(b(S) + 1)^nn^{z(S)}$.  Because the Betti number $\beta_j$ is bounded by the number of critical cells of dimension $j$, and because there are finitely many skylines for each $j$, it then suffices to prove that for any skyline $S$ corresponding to $j$--dimensional cells, we have
\[(b(S) + 1)^nn^{z(S)} = O((q+1)^nn^{qw+2r}).\]
Thinking of each block of size $k$ as contributing a value of $k-1$ to $j$, we observe that each $2$--block barrier in $S$ contributes a combined value of at least $w-1$ to $j$.  Thus we have $b(S) \leq q$.  In the case where $b(S) < q$, we certainly have $(b(S) + 1)^nn^{z(S)} = O((q+1)^nn^{qw+2r})$, because the factor that is exponential in $n$ overwhelms the factor that is polynomial in $n$.

Thus, it suffices to prove that if $b(S) = q$, then $z(S) \leq qw + 2r$.  The number of zeros in $S$ is $j$ plus the number of blocks in $S$ without a $1$.  Because $j = q(w-1) + r$, this means that it suffices to show that the number of blocks in $S$ without a $1$ is at most $q + r$.  Each barrier contains exactly one block without a $1$, so there are $q$ such blocks.  The other blocks without a $1$ are not part of barriers, so they have size at least $2$.  Each of these contributes at least $1$ to $j$, and the barriers together contribute at least $q(w-1)$ to $j$, so there are at most $r$ of these non-barrier blocks in $S$.  Thus, together the number of blocks in $S$ without a $1$ is at most $q + r$, so we have $z(S) \leq qw + 2r$, and thus
\[\beta_j[\config(n, w)] \leq \#(\textrm{crit\ cells\ of\ dim\ }j) = O \left( (q+1)^n n^{qw + 2r} \right),\]
completing the proof of Theorem \ref{thm:main}.
\end{proof}

\section{Comments} \label{sec:comments}

\begin{enumerate}

\item In principle, one could compute homology of $\config(n,w)$ exactly. For example, $\config(n,w)$ is homotopy equivalent to $U(n,w)$, which in turn is homeomorphic to the complement of a certain real subspace arrangement. We define $\mathcal{A}_{n,w}$ to be the collection of ${n \choose 2}$ subspaces of codimension $2$
$$ \{ \left( x_1, y_1, \dots, x_n, y_n \right) \in \R^{2n} \mid x_k = x_{\ell} \mbox{ and } y_k = y_{\ell}  \}$$
together with the $n \choose w+1$ subspaces of codimension $w+1$ 
$$ \{ \left( x_1, y_1, \dots, x_n, y_n \right) \mid x_{i_1} = x_{i_2} = \dots = x_{i_{w+1}}  \}.$$

Since $\config(n, w)$ is homotopy equivalent to the complement of this subspace arrangement, the homology is determined by the intersection lattice of the arrangement \cite{GM88}. One might apply essentially combinatorial formulas to derive a formula for $\beta_j [\config(n,w)]$. Such an exact formula might be nice to have, even if in a complicated or recursive form. \\

\item The definitions of homological solid, liquid, and gas make sense even for $0$th homology, especially for bounded regions. Determining the threshold for the solid-liquid phase transition for $0$th homology passing from trivial to nontrivial is equivalent to the well-studied ``sphere packing'' problem. 

There is another transition for $0$th homology, the homological liquid-gas phase transition, where the configuration space becomes connected. This seems to be much less well studied, but the threshold for connectivity is a natural and important question for a number of reasons. For example, in his survey article~\cite{Diaconis09}, Diaconis writes about it in the context of ergodicity of Markov chains, a requirement for being able to effectively sample a configuration space by making small random movements of disks. See also \cite{Kahle12} for discussion of the connectivity threshold. \\

\item We show in Section \ref{sec:lower} that certain toruses generate a positive fraction of the homology, but on the other hand we also know that even if one considers all of the toruses that one can make in similar ways, they do not seem to generate all of the homology. Consider the example $n=3$, $w=2$, $j=1$, illustrated in Figure \ref{fig:giant}. We know that $\beta_1 = 7$, but only $6$ cycles are accounted for by rotating a pair of disks around each other, and having the third disk on either one side or the other. The ``outside circle'' in the figure is visibly not in the span of the six smaller cycles. \\

\item Discrete Morse theory has been studied on the Salvetti complex before. For a more geometric approach to discrete gradients on $\cell(n)$, see \cite{SS07}, \cite{MS11}, and \cite{LP18}. We do not know whether the techniques from these papers can improve the upper bounds on $\beta_j [ \config(n,w) ]$, or perhaps even produce perfect discrete Morse functions or minimal CW complexes for $\config(n,w)$. \\

\item A related family of spaces is the ``no $k$-equal space'' $M^{\R}_{n, k}$ studied by Bj\"orner et al.\ in \cite{BL94,BLY92,BW95}. In particular, there is a natural map $\config(n,w) \to M^{\R}_{n,w+1}$ by projecting onto the $x$-coordinates. We do not know much about the induced map on homology in general. We point here out a coincidence we notice in the data that we do not currently have a good explanation for.
 
Comparing Table 1 in our appendix with the first table in the appendix of \cite{BW95}, it seems possible that $H_1 ( \config(n,2))$ is isomorphic to $H_1( M^{\R}_{n+1,3} )$---at least the Betti numbers are equal for $n \le 8$.

We emphasize that $\config(n,2)$ is a configuration space of $n$ points, and $M^{\R}_{n+1,3}$ is a configuration space of $n+1$ points, so we do not even have an obvious candidate of map to induce such an isomorphism. Supposing that there were such a map, we might wonder if it also induces an isomorphism on $\pi_1$ but apparently not, as follows.

We showed that $\config(n,2)$ is a $K(\pi,1)$ in Section \ref{sec:consequences}. The question of whether $M^{\R}_{n,3}$ is a $K(\pi,1)$ was asked by Bj\"orner \cite{Bjorner94} and answered affirmatively by Khovanov \cite{Khovonov96}. Khovanov describes this as a real analogue of the fact that $M^{\mathbb{C}}_{n,2}$ (the configuration space of points in the plane) is a $K(\pi,1)$. Since both spaces are $K(\pi,1)$'s, if they had isomorphic fundamental groups then they would be homotopy equivalent. But the Betti number tables rule out the higher homology groups $j \ge 2$ being isomorphic.\\

\end{enumerate}

\section*{Appendix by Ulrich Bauer and Kyle Parsons \\}

\bigskip

We computed the Betti numbers $\beta_j \left[ \cell(n,w) \right]$ for $n \le 8$, for homology with $\Z /2$ coefficients,  using the software PHAT \cite{BKRW17}. The results of the computations appear in Table \ref{table:Betti}. For a point of reference, we note that $\cell(8)$ has over $5$ million cells.
  \\

\begin{table}[]
\caption{Betti numbers of $\config(n,w)$ for small $n$ and $w$. Bold font indicates that homology is in the ``liquid regime''.}
\label{table:Betti}
\begin{tabular}{|c|c||c|c|c|c|c|c|c|c|}
\hline
$n$ & $w$ & $\beta_0$ & $\beta_1$ & $\beta_2$ & $\beta_3$ & $\beta_4$ & $\beta_5$ & $\beta_6$ & $\beta_7$ \\ \hline
2 & 1 & \bf 2 & 0 & 0 & 0 & 0 & 0 & 0 & 0 \\
2 & 2 & 1 & 1 & 0 & 0 & 0 & 0 & 0 & 0 \\ \hline
3 & 1 & \bf 6 & 0 & 0 & 0 & 0 & 0 & 0 & 0 \\
3 & 2 & 1 & \bf 7 & 0 & 0 & 0 & 0 & 0 & 0 \\ 
3 & 3 & 1 & 3 & 2 & 0 & 0 & 0 & 0 & 0 \\ \hline
4 & 1 & \bf 24 & 0 & 0 & 0 & 0 & 0 & 0 & 0 \\
4 & 2 & 1 & \bf 31 & 6 & 0 & 0 & 0 & 0 & 0 \\
4 & 3 & 1 & 6 & \bf 29 & 0 & 0 & 0 & 0 & 0 \\ 
4 & 4 & 1 & 6 & 11 & 6 & 0 & 0 & 0 & 0 \\ \hline
5 & 1 & \bf 120 & 0 & 0 & 0 & 0 & 0 & 0 & 0 \\
5 & 2 & 1 & \bf 111 & \bf 110 & 0 & 0 & 0 & 0 & 0 \\
5 & 3 & 1 & 10 & \bf 169 & \bf 40 & 0 & 0 & 0 & 0 \\
5 & 4 & 1 & 10 & 35 & \bf 146 & 0 & 0 & 0 & 0 \\ 
5 & 5 & 1 & 10 & 35 & 50 & 24 & 0 & 0 & 0 \\ \hline
6 & 1 & \bf 720 & 0 & 0 & 0 & 0 & 0 & 0 & 0 \\
6 & 2 & 1 & \bf 351 & \bf 1160 & \bf 90 & 0 & 0 & 0 & 0 \\
6 & 3 & 1 & 15 & \bf 714 & \bf 780 & \bf 80 & 0 & 0 & 0 \\
6 & 4 & 1 & 15 & 85 & \bf 1066 & \bf 275 & 0 & 0 & 0 \\
6 & 5 & 1 & 15 & 85 & 225 & \bf 875 & 0 & 0 & 0 \\ 
6 & 6 & 1 & 15 & 85 & 225 & 274 & 120 & 0 & 0 \\ \hline
7 & 1 & \bf 5040 & 0 & 0 & 0 & 0 & 0 & 0 & 0 \\
7 & 2 & 1 & \bf 1023 & \bf 9212 & \bf 3150 & 0 & 0 & 0 & 0 \\
7 & 3 & 1 & 21 & \bf 2568 & \bf 6468 & \bf 3920 & 0 & 0 & 0 \\
7 & 4 & 1 & 21 & 175 & \bf 5272 & \bf 5957 & \bf 840 & 0 & 0 \\
7 & 5 & 1 & 21 & 175 & 735 & \bf 7678 & \bf 2058 & 0 & 0 \\
7 & 6 & 1 & 21 & 175 & 735 & 1624 & \bf 6084 & 0 & 0 \\ 
7 & 7 & 1 & 21 & 175 & 735 & 1624 & 1764 & 720 & 0 \\ \hline
8 & 1 & \bf 40320 & 0 & 0 & 0 & 0 & 0 & 0 & 0 \\
8 & 2 & 1 & \bf 2815 & \bf 61194 & \bf 60900 & \bf 2520 & 0 & 0 & 0 \\
8 & 3 & 1 & 28 & \bf 8385 & \bf 37464 & \bf 76146 & \bf 6720 & 0 & 0 \\
8 & 4 & 1 & 28 & 322 & \bf 21477 & \bf 54910 & \bf 36239 & \bf 2520 & 0 \\
8 & 5 & 1 & 28 & 322 & 1960 & \bf 43728 & \bf 49959 & \bf 7896 & 0 \\
8 & 6 & 1 & 28 & 322 & 1960 & 6769 & \bf 62525 & \bf 17101 & 0 \\
8 & 7 & 1 & 28 & 322 & 1960 & 6769 & 13132 & \bf 48348 & 0 \\ 
8 & 8 & 1 & 28 & 322 & 1960 & 6769 & 13132 & 13068 & 5040 \\ \hline
\end{tabular}
\end{table}

\bibliographystyle{plain}
\bibliography{disksrefs}

\begin{thebibliography}{10}

\bibitem{Alpert17}
Hannah Alpert.
\newblock Restricting cohomology classes to disk and segment configuration
  spaces.
\newblock {\em Topology Appl.}, 230:51--76, 2017.

\bibitem{Arnold14}
Vladimir~I. Arnold.
\newblock {\em The cohomology ring of the colored braid group}, pages 183--186.
\newblock Springer Berlin Heidelberg, Berlin, Heidelberg, 2014.

\bibitem{BBK14}
Yuliy Baryshnikov, Peter Bubenik, and Matthew Kahle.
\newblock Min-type {M}orse theory for configuration spaces of hard spheres.
\newblock {\em Int. Math. Res. Not. IMRN}, 9:2577--2592, 2014.

\bibitem{BEJM17}
Ulrich Bauer, Herbert Edelsbrunner, Grzegorz Jablonski, and Marian Mrozek.
\newblock \v{C}ech--{D}elaunay gradient flow and homology inference for
  self-maps.
\newblock arXiv:1709.04068, 2017.

\bibitem{BKRW17}
Ulrich Bauer, Michael Kerber, Jan Reininghaus, and Hubert Wagner.
\newblock {PHAT -- Persistent Homology Algorithms Toolbox}.
\newblock {\em Journal of Symbolic Computation}, 78:76--90, January 2017.
\newblock Software available at \url{https://bitbucket.org/phat-code/phat}.

\bibitem{Bjorner94}
Anders Bj\"{o}rner.
\newblock Subspace arrangements.
\newblock In {\em First {E}uropean {C}ongress of {M}athematics, {V}ol. {I}
  ({P}aris, 1992)}, volume 119 of {\em Progr. Math.}, pages 321--370.
  Birkh\"{a}user, Basel, 1994.

\bibitem{BL94}
Anders Bj\"{o}rner and L\'{a}szl\'{o} Lov\'{a}sz.
\newblock Linear decision trees, subspace arrangements and {M}\"{o}bius
  functions.
\newblock {\em J. Amer. Math. Soc.}, 7(3):677--706, 1994.

\bibitem{BLY92}
Anders Bj{\"o}rner, L{\'a}szl{\'o} Lov{\'a}sz, and A~Yao.
\newblock Linear decision trees: volume estimates and topological bounds.
\newblock In {\em Proc. 24th ACM Symp. on Theory of Computing}, volume 170,
  page 177. Citeseer, 1992.

\bibitem{BW95}
Anders Bj\"{o}rner and Volkmar Welker.
\newblock The homology of ``{$k$}-equal'' manifolds and related partition
  lattices.
\newblock {\em Adv. Math.}, 110(2):277--313, 1995.

\bibitem{BZ14}
Pavle V.~M. Blagojevi{\'c} and G{\"u}nter~M. Ziegler.
\newblock Convex equipartitions via equivariant obstruction theory.
\newblock {\em Israel J. Math.}, 200(1):49--77, 2014.

\bibitem{CGKM12}
Gunnar Carlsson, Jackson Gorham, Matthew Kahle, and Jeremy Mason.
\newblock Computational topology for configuration spaces of hard disks.
\newblock {\em Phys. Rev. E}, 85:011303, Jan 2012.

\bibitem{Davis15}
Michael~W. Davis.
\newblock The geometry and topology of {C}oxeter groups.
\newblock In {\em Introduction to modern mathematics}, volume~33 of {\em Adv.
  Lect. Math. (ALM)}, pages 129--142. Int. Press, Somerville, MA, 2015.

\bibitem{Deeley11}
Kenneth Deeley.
\newblock Configuration spaces of thick particles on a metric graph.
\newblock {\em Algebr. Geom. Topol.}, 11(4):1861--1892, 2011.

\bibitem{Diaconis09}
Persi Diaconis.
\newblock The {M}arkov chain {M}onte {C}arlo revolution.
\newblock {\em Bull. Amer. Math. Soc. (N.S.)}, 46(2):179--205, 2009.

\bibitem{FMN14}
Steve Ferry, Konstantin Mischaikow, and Vidit Nanda.
\newblock Reconstructing functions from random samples.
\newblock {\em J. Comput. Dyn.}, 1(2):233--248, 2014.

\bibitem{Forman02}
Robin Forman.
\newblock A user's guide to discrete {M}orse theory.
\newblock {\em S\'em. Lothar. Combin.}, 48:Art.\ B48c, 35, 2002.

\bibitem{GM88}
Mark Goresky and Robert MacPherson.
\newblock {\em Stratified {M}orse theory}, volume~14 of {\em Ergebnisse der
  Mathematik und ihrer Grenzgebiete (3) [Results in Mathematics and Related
  Areas (3)]}.
\newblock Springer-Verlag, Berlin, 1988.

\bibitem{Gromov87}
M.~Gromov.
\newblock Hyperbolic groups.
\newblock In {\em Essays in group theory}, volume~8 of {\em Math. Sci. Res.
  Inst. Publ.}, pages 75--263. Springer, New York, 1987.

\bibitem{Kahle12}
Matthew Kahle.
\newblock Sparse locally-jammed disk packings.
\newblock {\em Ann. Comb.}, 16(4):773--780, 2012.

\bibitem{Khovonov96}
Mikhail Khovanov.
\newblock Real {$K(\pi,1)$} arrangements from finite root systems.
\newblock {\em Math. Res. Lett.}, 3(2):261--274, 1996.

\bibitem{KKLS16}
Rob Kusner, W\"{o}den Kusner, Jeffrey~C. Lagarias, and Senya Shlosman.
\newblock Configuration spaces of equal spheres touching a given sphere: The
  twelve spheres problem.
\newblock In {\em New Trends in Intuitive Geometry, Bolyai Society Mathematical
  Studies}, pages 219--277. Springer-Verlag GMBH, Germany, 2018.

\bibitem{LP18}
Davide Lofano and Giovanni Paolini.
\newblock Euclidean matchings and minimality of hyperplane arrangements.
\newblock arXiv:1809.02476, 2018.

\bibitem{MS11}
Francesca Mori and Mario Salvetti.
\newblock ({D}iscrete) {M}orse theory on configuration spaces.
\newblock {\em Math. Res. Lett.}, 18(1):39--57, 2011.

\bibitem{Salvetti87}
M.~Salvetti.
\newblock Topology of the complement of real hyperplanes in {${\bf C}^N$}.
\newblock {\em Invent. Math.}, 88(3):603--618, 1987.

\bibitem{SS07}
Mario Salvetti and Simona Settepanella.
\newblock Combinatorial {M}orse theory and minimality of hyperplane
  arrangements.
\newblock {\em Geom. Topol.}, 11:1733--1766, 2007.

\bibitem{Sinha13}
Dev~P. Sinha.
\newblock The (non-equivariant) homology of the little disks operad.
\newblock In {\em O{PERADS} 2009}, volume~26 of {\em S\'{e}min. Congr.}, pages
  253--279. Soc. Math. France, Paris, 2013.

\bibitem{Stanley12}
Richard~P. Stanley.
\newblock {\em Enumerative combinatorics. {V}olume 1}, volume~49 of {\em
  Cambridge Studies in Advanced Mathematics}.
\newblock Cambridge University Press, Cambridge, second edition, 2012.

\end{thebibliography}

\end{document}